\newcommand{\tM}{\widetilde{\mathbb{M}}}
\newcommand{\Alive}{{\rm Alive}}
\newcommand{\tcalM}{\widetilde{\mathcal{M}}}
\renewcommand{\k}{{K}}
\newcommand{\bM}{\pmb{\mathbb{M}}}
\newcommand{\bv}{\pmb{{v}}}
\renewcommand{\bM}{\pmb{{M}}}
\newcommand{\bcF}{\pmb{\mathcal{F}}}
\newcommand{\un}{{\mathbf{1}}}
\newcommand{\V}{\mathbb V}
\definecolor{darkpastelgreen}{rgb}{0.01, 0.75, 0.24}
\renewcommand{\le}{\leqslant}
\renewcommand{\leq}{\leqslant}
\renewcommand{\ge}{\geqslant}
\renewcommand{\geq}{\geqslant}
\newcommand{\jmaxN}{{j^N_{\rm max}}}
\newcommand{\jmax}{{j_{\rm max}}}
\newcommand{\cadlag}{c\`adl\`ag }
\newcommand{\A}{\mathbb{A}}
\newcommand{\E}{\mathbb{E}}
\newcommand{\N}{\mathbb{N}}
\newcommand{\M}{\mathbb{M}}
\newcommand{\R}{\mathbb{R}}
\renewcommand{\P}{\mathbb{P}}
\renewcommand{\L}{\mathbb{L}}
\newcommand{\calM}{\mathcal{M}}
\newcommand{\calD}{\mathcal{D}}
\newcommand{\calF}{\mathcal{F}}
\newcommand{\calB}{\mathcal{B}}
\newcommand{\calN}{\mathcal{N}}
\newcommand{\eps}{\varepsilon}
\newcommand{\ph}{\varphi}
\newcommand{\Var}{\mathbb V}
\newcommand{\Card}{{\rm card}}
\newcommand{\eqdef}{:=}
\newcommand{\one}{{\mathbf{1}}}
\newcommand{\dps}{\displaystyle}
\newcommand{\set}[1]{\left\{#1\right\}}
\newcommand{\p}[1]{{\mathopen{}\left(#1\right)\mathclose{}}}
\renewcommand{\b}[1]{\left [ \, #1 \, \right ]}
\newcommand{\norm}[1]{\left\Vert#1\right\Vert}
\theoremstyle{plain}
\newtheorem{The}{Theorem}[section]
\newtheorem{Lem}[The]{Lemma}
\newtheorem{Pro}[The]{Proposition}
\newtheorem{Cor}[The]{Corollary}
\newtheorem{Def}[The]{Definition}
\newtheorem{Not}[The]{Notation}
\newtheorem{AssA}{Assumption}
\newtheorem{AssB}{Assumption}
\newtheorem{AssC}{Assumption}
\numberwithin{equation}{section}
\theoremstyle{definition}
\newtheorem{Rem}[The]{Remark}
\numberwithin{equation}{section}
\begin{document}

\begin{center}
  {\sc \Large On Synchronized Fleming-Viot\\ Particle Systems}
  \vspace{0.5cm}
\end{center}

{\bf Fr\'ed\'eric C\'erou\footnote{Corresponding author.}}\\
{\it INRIA Rennes \& IRMAR, France }\\
\textsf{frederic.cerou@inria.fr}
\bigskip

{\bf Arnaud Guyader}\\
{\it LPSM, Sorbonne Universit\'e \& CERMICS, France }\\
\textsf{arnaud.guyader@upmc.fr}
\bigskip

{\bf Mathias Rousset}\\
{\it INRIA Rennes \& IRMAR, France }\\
\textsf{mathias.rousset@inria.fr}
\bigskip

\medskip

\begin{abstract} This article presents a variant of Fleming-Viot particle systems, which are a standard way to approximate the law of a Markov process with killing as well as related quantities. Classical Fleming-Viot particle systems proceed by simulating $N$ trajectories, or particles, according to the dynamics of the underlying process, until one of them is killed. At this killing time, the particle is instantaneously branched on one of the $(N-1)$ other ones, and so on until a fixed and finite final time $T$. In our variant, we propose to wait until $K$ particles are killed and then rebranch them independently on the  $(N-K)$ alive ones. Specifically, we focus our attention on the large population limit and the regime where $K/N$ has a given limit when $N$ goes to infinity. In this context, we establish consistency and asymptotic normality results. The variant we propose is motivated by applications in rare event estimation problems.\medskip

  \noindent \emph{Index Terms} --- Sequential Monte Carlo, Interacting particle systems, Process with killing\medskip

  \noindent \emph{2010 Mathematics Subject Classification}: 82C22, 82C80, 65C05, 60J25, 60K35, 60K37

\end{abstract}

\newpage
\tableofcontents

\section{Introduction}\label{intro}

Let $X=(X_t)_{t\geq 0}$ denote a Markov process evolving in a state space of the form $ F \cup \set{\partial}$, where $\partial \notin F$ is an absorbing state: $X$ evolves in $F$ until it reaches $\partial$ and then remains trapped there forever. $X$ is called a killed Markov process with cemetery point $\partial$. Let us also denote $\tau_\partial$ the associated killing time, meaning that
$$\tau_{\partial}\eqdef  \inf\{t\geq 0, X_t=\partial \}.$$
Given a deterministic final time $T>0$, we are interested both in the distribution of $X_T$ given that it is alive at time $T$, i.e., ${\cal L}(X_T|\tau_{\partial }>T)$, and in the probability of this event, that is $p_T\eqdef  \P(\tau_{\partial }>T)$, with the natural assumption that $p_T>0$ (see Figure \ref{fv1}). Without loss of generality, we will assume for simplicity that $\P(X_0 = \partial)=0$, that is $p_0 = 1$. Let us stress that in all this paper, $T$ is held fixed and finite.\medskip 

\begin{figure}
\begin{center}
\input{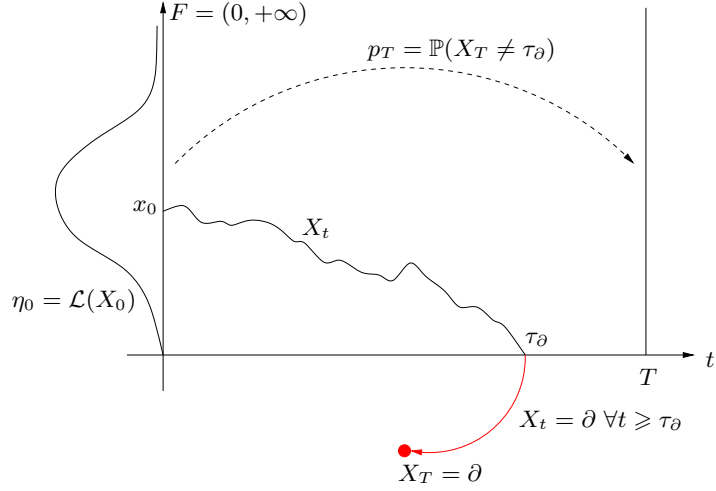}
\caption{A Markov process with killing.}
\label{fv1}
\end{center}
\end{figure}

%
%

We will also assume -- as a consequence of Assumption \ref{ass:A} below -- that the non-increasing function $t \mapsto p_t$ is continuous on $[0,T]$, and we will consider the approximation $\rho_t$ of $p_t$ defined by
$$
t \mapsto \rho_t \eqdef \theta^{ \lfloor \log p_t / \log \theta \rfloor },
$$
where $\theta \in (0,1)$ is a given probability typically much larger than $p_T$. To fix ideas, one can think of $p_T$ being lower than $10^{-6}$ while $\theta=1/2$. In other words, $t \mapsto \rho_t$ is the right continuous and piecewise constant function that coincides with $p_t$ for each power of $\theta$, that is (see Figure \ref{pt})
$$
\rho_t = \theta^j\ \Longleftrightarrow\  \theta^{j+1} < p_t \leq \theta^j, \, j \in \N.
$$
Let us denote $$\jmax=\lfloor \log p_T / \log \theta \rfloor.$$ For simplifying technical reasons, we assume that $\log p_T / \log \theta$ is not an integer, so that $p_T=r\theta^{\jmax}$ with $\theta<r<1$. Moreover, we suppose that for each $0\leq j\leq \jmax$, there exists a unique $t_j$ such that 
$$p_{t_j}=\P(\tau_\partial > t_j )=\theta^j.$$ 
This is obviously true if the non-increasing function $t \mapsto p_t$ is in fact strictly decreasing on $[0,T]$ (see Figure \ref{pt}).\medskip

\begin{figure}
\begin{center}
\input{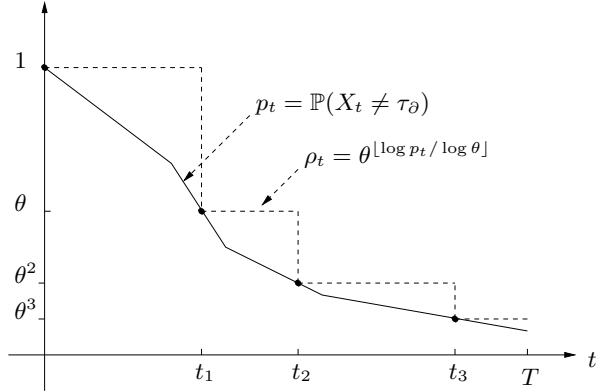}
\caption{The mappings $t\mapsto p_t$ and $t\mapsto \rho_t$ when $\theta=1/2$.}
\label{pt}
\end{center}
\end{figure}

Under Assumptions~\ref{ass:A} and~\ref{ass:B} that will be detailed below, the following process is well defined for any number of particles $N\geq 2$ and any $1\leq K<N$. We propose to call it a Synchronized Fleming-Viot Particle System (see Figure \ref{fv4}). The ``classical'' Fleming-Viot Particle System corresponds to the case where $K=1$, see for example \cite{bhim96,GK04,lobus,GK12,BBF12,v14,cdgr1,cdgr2} and references therein.

\begin{Def}[Synchronized Fleming-Viot Particle System]\label{def:ips} Let $X$ denote a killed Markov process in $F$ with cemetery point $\partial$. The associated synchronized Fleming-Viot particle system $(X^1_t, \cdots, X^N_t)_{t \in [0,T]}$ with $\k$ synchronized branchings is the Markov process with state space $\p{ F\times \set {\partial} }^N$ defined by the following set of rules:
\begin{itemize}
\item Initialization: consider $N$ i.i.d.~~particles
\begin{equation*}
X_0^1,\dots,X_0^N\ \overset{\rm i.i.d.~}{\sim}\ \eta_0={\cal L}(X_0),
\end{equation*}
\item Evolution and killing: each particle evolves independently according to the law of the underlying Markov process $X$ until $\k$ of them hit  $\partial$ (or the final time $T$ is reached),
\item Branching (or rebirth, or splitting): the $K$ killed particles are taken from $\partial $, and are independently and instantaneously given the state of one of the $(N-\k)$ other particles (randomly uniformly chosen),
\item and so on until final time $T$.
\end{itemize}
\end{Def}

\begin{figure}
\begin{center}
\input{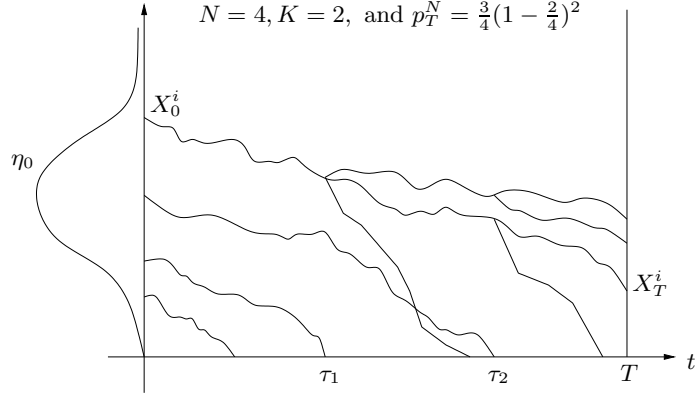}
\caption{A Synchronized Fleming-Viot Particle System.}
\label{fv4}
\end{center}
\end{figure}

As will be proved in Proposition~\ref{pro:quant}, it turns out that the sequence of quantiles $(t_j)_{0< j \leq \jmax}$ are approximated by the sequence of successive branchings times $(\tau_j)_{0<j \leq \jmax}$ of the synchronized Fleming-Viot particle system when $\k := \k_N$ satisfies
\begin{equation}\label{eq:scale}
1-\frac{\k_N}{N} \xrightarrow[N\to\infty]{} \theta.
\end{equation}

Additionally, we define the right continuous counting process
$$
t \mapsto \calB_t =\calB_t^N\eqdef \Card \set{\text{branching times}\leq t}.
$$
and then consider the estimators
$$\eta_t^N\eqdef  \frac{1}{N}\sum_{n=1}^N\delta_{X_t^n}\hspace{1cm}\mbox{and}\hspace{1cm} \rho_t^N\eqdef  \left(1-\tfrac{\k}{N}\right)^{\calB_t}, $$
of 
$$\eta_t \eqdef {\cal L} \p{X_t | \tau_\partial > t_j} \quad \forall\ t_j \leq t < t_{j+1},$$
  and $\rho_t$ respectively. Since we have assumed that $p_0 = 1$, we have $p_0=\rho_0$ and $\eta_0 = {\cal L}(X_0)$. Note that the branching times $\tau_j$ estimating the quantiles $t_j$ are implicitly estimated by the jump times of the process $ t \mapsto \rho^N_t$. Moreover, we emphasize that $\eta_t$ is not ${\cal L}(X_t|\tau_\partial>t)$, which is the law of the process $X_t$ given that it is still alive at time $t$. In particular, $\eta_t$ does not define a probability measure on $F$, but on $F\cup\{\partial\}$ with a Dirac at $\partial$ associated with the probability $\P(\tau_\partial\leq t|\tau_\partial>t_j)$ for all $t$ such that $t_j \leq t < t_{j+1}$.\medskip

The distribution of the process restricted to $F$, that is
$$
\gamma_t(\ph) \eqdef \E[ \ph(X_t)] \quad \forall \ph \text{\, with \,} \ph(\partial) =0,
$$
is then estimated by
\begin{equation}\label{ljsnclnjc}
\gamma^N_t \eqdef \rho_t^N \times \eta_t^N.
\end{equation}
The probability $p_t$ that the process is still alive at time $t$ is estimated by (see also Figure \ref{fv4})
$$
p^N_t \eqdef \gamma^N_t( \un_F) = \rho_t^N \times \eta_t^N( \un_F)=\left(1-\frac{\k}{N}\right)^{\calB_t}\frac{1}{N}\sum_{n=1}^N\un_F(X_t^n),
$$
and the distribution of $X_t$ conditioned to be alive, i.e., ${\cal L}(X_t|\tau_\partial>t)$, by
$$
\frac{1}{\gamma^N_t(\un_F)}\gamma^N_t = \frac{1}{\eta^N_t(\un_F)}\eta^N_t .
$$
Note that this notation differs from the work in~\cite{cdgr2} and of classical particle models with fixed population size where the empirical distribution of particles $\eta_t^N$ estimates the  
law of $X_t$ conditioned to be alive at time $t$. In particular, in the present work, the mapping $t \mapsto \eta_t(\un_F)$ is \cadlag with jumps at the quantiles $t_j$ with $\Delta \eta_{t_j}(\un_F) =1- \theta$.\medskip
%

The purpose of this paper is to extend the results obtained in~\cite{cdgr2} for classical Fleming-Viot particle systems to the synchronized Fleming-Viot particle systems that we have just defined under the scaling assumption \eqref{eq:scale}, that is when $\k=\k_N$ is proportional to $N$. The main result corresponds to Theorem~\ref{gamma} and Corollary~\ref{eta}, which provide CLT type results for the estimators $\gamma^N_T$, $p^N_T$ and $\eta^N_T / \eta^N_T(\un_F)$. We also prove convergence of the branching times $\tau_j$ towards the corresponding quantiles $t_j$ in Proposition \ref{pro:quant}. \medskip

We refer to~\cite{cdgr2} for examples where Assumptions~\ref{ass:A} and~\ref{ass:B} below are verified. In particular, this includes the case where $X_t$ is a regular enough uniformly elliptic diffusive process killed when hitting the smooth boundary of a given compact domain. \medskip

Finally, note that this work is motivated by practical applications in rare event estimation problems. We refer to \cite{Au2001263,au:901,cg2,cglp,ghm,cdfg} for the presentation of the set of methods we have in mind, called Adaptive Multilevel Splitting or Subset Simulation. More precisely, we prove in~\cite{cdgr3} that splitting algorithms can be interpreted as Fleming-Viot particle systems, using a ``score function'' (also called an importance function or a reaction coordinate) as a new time-index. The present paper enables us to obtain CLT type results for versions of this algorithm where the $K$ particles with minimal score are killed at each step. The interested reader can find applications of Adaptive Multilevel Splitting in various fields in \cite{cfgjournal,teo2016adaptive,Lestang,MR3924366,MR3937660,tutorialAMS} and references therein.\medskip

The rest of the paper is organized as follows. Section~\ref{AMEKCN} details our assumptions and exposes the main results of the paper, Section \ref{mazlco} is dedicated to the proofs while Section \ref{akjajdnzjd} gathers some supplementary material.

%
%
%
%
%
%
%
%
%
%
\section{Main result}\label{AMEKCN}
\subsection{Assumptions}\label{zcijo}

We assume that $F$ is a measurable subset of some reference Polish space, and that for each initial condition, under this reference topology, $X$ is \cadlag in $F \cup \set{\partial}$ and satisfies the time-homogeneous Markov property. Its probability transition is denoted $Q$, meaning that there is a semi-group operator $(Q^t)_{t \geq 0}$ defined for any bounded measurable function $\ph:F\to\R$, any $x\in F$ and any $t\geq 0$, by
$$Q^t\ph(x) \eqdef \E[\ph(X_t)|X_0=x].$$
By convention, in the latter and in all what follows, the test function $\ph$ defined on $F$ is extended on $F\cup\{\partial\}$ by setting $\ph(\partial)=0$. Thus, we have $Q^t\ph( \partial )=0$  for all $t\geq 0$. This equivalently defines a sub-Markovian semi-group on $F$ that is also denoted $(Q^t)_{t \geq 0}$. \medskip

For any bounded $\ph:F \to\R$ extended with the convention $\ph(\partial)=0$ and any $t \in [0,T]$, we can then consider the unnormalized measure
$$\gamma_t(\ph) = \E[\ph(X_t)] = \E[\ph(X_t)\un_{\tau_\partial>t}]= \eta_0 Q^{t} \ph ,$$
with $X_0\sim\eta_0=\gamma_0$. For any $t\in[0,T]$, one has $p_t=\P(\tau_\partial >t)=\gamma_t(\un_F)$. As mentioned before, the associated empirical approximation is given by
$$\gamma_t^N\eqdef  \rho_t^N \eta_t^N,$$
so that we can define the limiting measure
$$
\eta_t \eqdef  \gamma_t /\rho_t.
$$
We stress again that, contrary to~\cite{cdgr2}, $\eta_{t}(\un_F) \neq 1$ in general but since we have assumed that the process $X_t$ is \cadlag we still have  $ \eta_{t_j}(\un_F)=1$ and
$$
 \eta_{t_j} = {\cal L} \p{ X_{t_j} | \tau_\partial > t_j }.
$$

We recall that, when $X$ is a time-homogeneous Markov process, the process $t \mapsto Q^{T-t}(\ph)(X_t)=\gamma_t(Q^{T-t}(\ph))$ is a \cadlag martingale on $[0,T]$ with respect to the natural filtration of $X$. Our fundamental assumptions can now be detailed.

\begin{AssA}\label{ass:A} Let $X$ denote a time-homogeneous Markov process.
\begin{enumerate}
\item[(i)] For any initial condition $X_0=x \in F$, the distribution of the killing time $\tau_\partial$ is atomless.
\item[(ii)] There exists a space $\calD$ of bounded measurable real-valued functions on $F$,
which contains at least the indicator function $\un_F$, and such that for any $\ph \in \calD$, any initial condition $X_0=x$ and any final time $T$, the jumps of the  \cadlag martingale $t \mapsto Q^{T-t}(\ph)(X_t)$ have an atomless distribution.\end{enumerate}
\end{AssA}

%
%
%
%

%
%
%
%
%
%
%
%

Our second assumption ensures the  existence of the particle system at all time.
\begin{AssB}\label{ass:B} The particle system of Definition~\ref{def:ips} is well-defined in the sense that $\P(\calB_T < + \infty)=1$.
\end{AssB}

Under Assumptions~\ref{ass:A} and~\ref{ass:B}, the non-increasing jump processes  $t \mapsto p_t^N$ and $t \mapsto \rho_t^N$ are strictly positive.

\begin{Rem}\label{lem:pcont}
Condition~$(i)$ of Assumption~\ref{ass:A} and Lebesgue's continuity theorem imply that, for any initial distribution $\eta_0$ on $F$, the non-increasing mapping $t \mapsto p_t=\P\p{\tau_\partial > t}$ is continuous. \medskip
\end{Rem}

Our third and final assumption ensures the strict monoticity of $t \mapsto p_t$ at each quantile.

\begin{AssC}\label{ass:C} The continuous non-increasing mapping $t \mapsto p_t$ is strictly decreasing at each $t_j$, $j =1 \ldots \jmax$.
\end{AssC}

%
%
%
%
%

\subsection{Main result}\label{sec:main}

We keep the notation of Section~\ref{intro}. In particular, $(X^1_t, \ldots, X^N_t)_{t\geq 0}$ denotes the synchronized Fleming-Viot particle system, and
$$\tau_{j} \eqdef \, \text{$j$-th branching time of the particle system}.$$
Accordingly, the number ${\calB_t}$ of branchings until time $t$  is ${\calB_t}=\sum_{j=1}^\infty\un_{\tau_j\leq t}$.\medskip

%
%
%
%

 For the upcoming results, we work under Assumptions~\ref{ass:A}, \ref{ass:B}, and~\ref{ass:C}, and we assume that $\k = \k_N$ satisfies
 $$
  \frac{\k_N}{N} \xrightarrow[N\to\infty]{} 1-\theta\in(0,1).
 $$
We start with the convergence of the branching times towards the quantiles.

\begin{Pro}
\label{pro:quant}
We have
$$(\tau_1, \ldots ,\tau_{j_{\rm max}}) \xrightarrow[N \to + \infty]{\P} (t_1, \ldots, t_{j_{\rm max}})\hspace{1cm}\mathrm{and}\hspace{1cm}\P(\tau_{\jmax+1}\leq T)\xrightarrow[N \to + \infty]{} 0.$$
\end{Pro}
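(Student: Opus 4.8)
The plan is to proceed by induction on $j \in \{0,1,\dots,\jmax\}$, establishing jointly that $\tau_j \to t_j$ in probability (with the convention $\tau_0 = t_0 = 0$), that $\rho^N_{\tau_j} = (1-\k_N/N)^j \to \theta^j = p_{t_j}$, and that the empirical measure $\eta^N_{\tau_j}$ of the $N$ particles immediately after the $j$-th branching converges weakly, in probability, to $\eta_{t_j} = {\cal L}(X_{t_j}\mid \tau_\partial > t_j)$. The case $j=0$ is the classical Glivenko--Cantelli theorem, since $X^1_0,\dots,X^N_0$ are i.i.d.\ with law $\eta_0$ and $t\mapsto p_t$ is continuous (Remark~\ref{lem:pcont}).

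For the inductive step, fix $j$ and assume the statement at rank $j$. On the stochastic interval $[\tau_j,\tau_{j+1})$ the $N$ particles evolve, by definition of the synchronized system, as independent copies of $X$ started from their post-branching positions; denote by $\tau^n$ the killing time of the $n$-th such copy. The alive fraction at time $\tau_j + s$ is $N^{-1}\sum_{n=1}^N \un_{\tau^n > s}$, whose conditional expectation given $\calF_{\tau_j}$ is the continuous non-increasing function $s \mapsto \eta^N_{\tau_j}(Q^s \un_F)$ (continuity follows from the atomlessness of $\tau_\partial$, Assumption~\ref{ass:A}$(i)$). A uniform-in-$s$ law of large numbers --- obtained from the monotonicity of $s\mapsto N^{-1}\sum_n \un_{\tau^n>s}$, the uniform continuity of its conditional mean, and an $L^2$ bound on a finite grid --- shows this alive fraction is uniformly close to $s\mapsto \eta^N_{\tau_j}(Q^s\un_F)$. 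Combining $\tau_j \to t_j$, the induction hypothesis $\eta^N_{\tau_j}\to\eta_{t_j}$, and the identity $\eta_{t_j}(Q^s\un_F) = \gamma_{t_j+s}(\un_F)/p_{t_j} = p_{t_j+s}/p_{t_j}$ (semigroup property), the alive fraction at time $t$ converges, uniformly on compact subsets of $(t_j,T]$, to $p_t/p_{t_j}$. Since $\tau_{j+1}$ is the first time this fraction reaches $1-\k_N/N \to \theta$ and $t\mapsto p_t/p_{t_j}$ is continuous, equal to $\theta$ at $t_{j+1}$, and strictly decreasing there by Assumption~\ref{ass:C}, an elementary inversion of monotone functions gives $\tau_{j+1}\to t_{j+1}$ in probability and $\rho^N_{\tau_{j+1}}\to \theta^{j+1} = p_{t_{j+1}}$. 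To close the induction one must re-establish the convergence of the empirical measure at $\tau_{j+1}$: a uniform-in-time law of large numbers for the weighted empirical measures $u\mapsto N^{-1}\sum_n \ph(X^n_{\tau_j+u})\un_{\tau^n>u}$ (over a convergence-determining family $\ph$), together with $\tau_{j+1}-\tau_j \to t_{j+1}-t_j$ and the induction hypothesis, shows the normalized empirical measure of the $N-\k_N$ particles alive at $\tau_{j+1}^-$ converges to $\eta_{t_j}Q^{t_{j+1}-t_j}/\bigl(\eta_{t_j}Q^{t_{j+1}-t_j}(\un_F)\bigr) = \gamma_{t_{j+1}}/p_{t_{j+1}} = \eta_{t_{j+1}}$ (Markov property, $p_{t_{j+1}}>0$); the $(j{+}1)$-st branching then resamples $\k_N$ of them uniformly and independently, and since $\k_N/N \to 1-\theta < 1$ both the $N-\k_N$ ``parents'' and the $\k_N$ ``children'' are asymptotically $\eta_{t_{j+1}}$-distributed, so $\eta^N_{\tau_{j+1}}\to\eta_{t_{j+1}}$.

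For $\P(\tau_{\jmax+1}\leq T)\to 0$, apply the alive-fraction analysis above on $[\tau_{\jmax},T]$: the alive fraction at time $t$ converges uniformly to $p_t/p_{t_{\jmax}}$, a non-increasing function of $t$, hence bounded below on $[\tau_{\jmax},T]$ by its value at $T$, namely $p_T/\theta^{\jmax} = r$ with $\theta<r<1$ (the standing assumption $\log p_T/\log\theta\notin\N$). As $1-\k_N/N\to\theta<r$, for any $r'\in(\theta,r)$ and $N$ large the alive fraction stays above $r'>1-\k_N/N$ on all of $[\tau_{\jmax},T]$ with probability tending to $1$, so no $(\jmax+1)$-st branching occurs.

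I expect the main obstacle to be the uniform-in-time laws of large numbers used inside each epoch, and specifically the fact that the epoch endpoints $\tau_j$ are themselves random: one cannot apply the law of large numbers at the deterministic times $t_j$ but must control the (weighted) empirical processes uniformly over whole time intervals so as to evaluate them at the random, converging times $\tau_j$, and this must be done conditionally on $\calF_{\tau_j}$ for particles that are independent but not identically distributed. For the alive-fraction process, monotonicity in time makes this tractable via a Dini/P\'olya-type argument; handling the empirical measure at the random branching times (as opposed to a deterministic time) is the genuinely delicate part.
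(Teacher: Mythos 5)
Your route is genuinely different from the paper's and, as written, has real gaps. The paper proves Proposition~\ref{pro:quant} in a few lines as a corollary of the $L^2$ estimate of Proposition~\ref{pro:estimate} (itself a consequence of the martingale decomposition and quadratic-variation bounds of the preceding subsections): the event $\{\tau_j<t_j-\varepsilon\}$ forces $p^N_{t_j-\varepsilon}\leq(1-\k_N/N)^j$, and since $p^N_{t_j-\varepsilon}\to p_{t_j-\varepsilon}>\theta^j$ in probability by Proposition~\ref{pro:estimate}, this event becomes negligible; the same estimate applied at $T$ handles $\tau_{\jmax+1}$. You instead attempt a direct induction on epochs, propagating weak convergence of the post-branching empirical measure $\eta^N_{\tau_j}$ and uniform laws of large numbers on each interval $[\tau_j,\tau_{j+1})$. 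This is a much heavier program, and it does not use the martingale machinery the paper has already built.

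Two concrete gaps. First, your inductive hypothesis is weak convergence of $\eta^N_{\tau_j}$ to $\eta_{t_j}$, from which you infer $\eta^N_{\tau_j}(Q^s\un_F)\to\eta_{t_j}(Q^s\un_F)$. Weak convergence only yields convergence against bounded continuous test functions, and nothing in Assumption~\ref{ass:A} makes $Q^s\un_F$ continuous (the space $\calD$ consists of bounded measurable functions, not of continuous ones, and no Feller-type regularity is assumed). The paper's $L^2$ estimate circumvents this entirely because it holds for all bounded measurable $\ph\in\calD$. To repair your argument you would need to strengthen the inductive hypothesis to direct convergence of $\eta^N_{\tau_j}(Q^s\un_F)$, and, for the empirical-measure propagation, of $\eta^N_{\tau_j}(Q^s\ph)$ for the relevant $\ph$. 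Second, the uniform-in-time law of large numbers you invoke for the weighted empirical process $u\mapsto N^{-1}\sum_n\ph(X^n_{\tau_j+u})\un_{\tau^n>u}$ is not obtainable by the Dini/P\'olya argument you use for the alive fraction: unlike the alive fraction, this process is not monotone in $u$. You acknowledge this is the delicate point but do not resolve it; the natural tool is precisely the martingale $t\mapsto Q^{T-t}\ph(X_t)$ together with Doob-type maximal control, which is what the paper's decomposition into $\M$ and $\calM$ is built to exploit, but which your epoch-by-epoch induction does not set up. Without it, the step from the hypothesis at $\tau_j$ to the conclusion at $\tau_{j+1}$ is not established.

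In short, your sketch is morally reasonable and would give a self-contained ``dynamical'' proof, but as stated it proves less than it claims, and completing it would require either additional regularity assumptions or essentially the same martingale estimates the paper already has (at which point the paper's two-line deduction from Proposition~\ref{pro:estimate} is strictly simpler).
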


We can now expose the main result of the present paper. As usual, $\calN(m,\sigma^2)$ denotes the normal distribution with mean $m$ and variance $\sigma^2$. Furthermore, for any probability distribution $\mu$ on $F$ and any test function $\ph:F\to\R$, the standard notation $\Var_\mu(\ph)$ stands for the variance of the random variable $\ph(Y)$ when $Y$ is distributed according to $\mu$, i.e.,
$$\Var_\mu(\ph)\eqdef  \Var(\ph(Y))=\E[\ph(Y)^2]-\E[\ph(Y)]^2=\mu(\ph^2)-\mu(\ph)^2.$$

\begin{The}\label{gamma}
Let us denote by $\overline{\cal D}$ the closure with respect to the norm $\norm{\cdot}_{\infty}$ of the space ${\cal D}$ satisfying Condition~$(ii)$ of Assumption~\ref{ass:A}. Then for any $\ph$ in $\overline{\cal D}$ extended with $\ph(\partial)=0$, one has the convergence in distribution
$$\sqrt{N}\left(\gamma_T^N(\ph)-\gamma_T(\ph)\right)\xrightarrow[N\to\infty]{d}{\cal N}(0,\sigma_T^2(\ph)),$$
where $\sigma_T^2(\ph)$ is defined by
\begin{align}
\sigma_T^2(\ph)  &  \eqdef  \quad \theta^{2 \jmax} \V_{\eta_T}(\ph)+  \jmax(1/\theta -1) \theta^{2 \jmax}\eta_T\p{\ph}^2  \nonumber \\
 & \qquad +  \sum_{j=1}^{\jmax} \V_{\eta_{t_j}}\p{Q^{T-t_j}(\ph)} \nonumber \p{\theta^{2j-1}-\theta^{2j+1}}. \label{eq:variance} 
\end{align}


\end{The}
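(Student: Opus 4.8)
### Proof strategy

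The plan is to establish the CLT by a telescoping-martingale decomposition of the error $\gamma_T^N(\ph)-\gamma_T(\ph)$ over the successive branching stages, combined with the convergence of branching times $\tau_j\to t_j$ from Proposition~\ref{pro:quant} and a conditional Lindeberg-type central limit theorem. First I would set up the natural filtration $(\calF_t^N)$ generated by the particle system, and introduce the "stage" structure: between consecutive branching times $\tau_{j}$ and $\tau_{j+1}$ the $N$ particles evolve as independent copies of $X$ (killed particles frozen at $\partial$), and at $\tau_{j+1}$ exactly $\k_N$ of them rebranch. The key object is, for fixed test function $\ph\in\overline{\calD}$, the \cadlag process $t\mapsto \gamma_t^N\big(Q^{T-t}\ph\big) = \rho_t^N\,\eta_t^N\big(Q^{T-t}\ph\big)$, which should be shown to be a martingale (or a martingale up to negligible terms) terminating at $\gamma_T^N(\ph)$ and starting at $\eta_0 Q^T\ph=\gamma_T(\ph)$. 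Writing the total error as a sum of increments of this process across the stages $[\tau_j,\tau_{j+1})$ together with the "jump" contributions at the branching times $\tau_j$, one gets a decomposition of $\sqrt N(\gamma_T^N(\ph)-\gamma_T(\ph))$ into a martingale array to which one applies the standard CLT for triangular arrays of martingale differences (checking the conditional Lindeberg condition and identifying the conditional variance).

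The second ingredient is the identification of the limiting conditional variance, which must reproduce the three groups of terms in~\eqref{eq:variance}. The contributions split as follows. Within a stage, freezing $\rho^N$ at its (deterministic limit) value $\theta^{j}$ on $[\tau_j,\tau_{j+1})$, the fluctuation of $\eta_t^N\big(Q^{T-t}\ph\big)$ comes from $N$ i.i.d.\ evolutions, and by the martingale property of $t\mapsto Q^{T-t}\ph(X_t)$ its quadratic variation over the whole time interval telescopes; after multiplying by $\rho^2\approx\theta^{2j}$ and using $\theta^{2j}-\theta^{2j+2}$ increments one recovers the sum $\sum_{j=1}^{\jmax}\V_{\eta_{t_j}}\!\big(Q^{T-t_j}\ph\big)(\theta^{2j-1}-\theta^{2j+1})$ together with the leading term $\theta^{2\jmax}\V_{\eta_T}(\ph)$ of the last (incomplete) stage $[\tau_{\jmax},T]$. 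The branching steps contribute the remaining term: at each of the $\jmax$ branchings, resampling $\k_N\sim(1-\theta)N$ particles uniformly among the $N-\k_N\sim\theta N$ survivors injects a variance of order $\tfrac1N(1/\theta-1)\theta^{2j}\,\eta_{t_j}\!\big(Q^{T-t_j}\ph\big)^2$ per branching — the multinomial/sampling-without-replacement variance of the empirical mean of the survivors — and since $\eta_{t_j}(Q^{T-t_j}\ph)=\gamma_{t_j}(Q^{T-t_j}\ph)/\theta^j=\gamma_T(\ph)/\theta^j=\eta_T(\ph)$ (using that $\gamma_t(Q^{T-t}\ph)=\gamma_T(\ph)$ is constant in $t$), summing over $j=1,\dots,\jmax$ yields exactly $\jmax(1/\theta-1)\theta^{2\jmax}\eta_T(\ph)^2$. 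One also needs that $\rho_{t}^N=(1-\k_N/N)^{\calB_t}$ has vanishing fluctuation at scale $\sqrt N$ on the good event where $\calB_T=\jmax$ (and the complementary event has probability $\to0$ by Proposition~\ref{pro:quant}), so $\rho^N$ may be replaced by its deterministic limit throughout.

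A technical preliminary is extending the CLT from $\ph\in\calD$ to $\ph\in\overline{\calD}$: for $\ph\in\overline\calD$ pick $\ph_n\in\calD$ with $\norm{\ph_n-\ph}_\infty\to0$, prove a uniform $L^2$ bound $\sup_N N\,\E\big[(\gamma_T^N(\psi)-\gamma_T(\psi))^2\big]\le C\norm\psi_\infty^2$ (which also follows from the martingale decomposition, bounding each quadratic-variation term by $\norm\psi_\infty^2$ using $\abs{Q^{T-t}\psi}\le\norm\psi_\infty$), and check that $\ph\mapsto\sigma_T^2(\ph)$ is continuous for $\norm\cdot_\infty$; then a standard triangular-approximation argument transfers the convergence in distribution. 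I expect the main obstacle to be the rigorous control of the branching-time randomness: on each stage the relevant increments are evaluated at the \emph{random} times $\tau_j$ rather than the deterministic $t_j$, so one must show that replacing $\tau_j$ by $t_j$ inside quantities like $\V_{\eta^N_{\tau_j-}}(Q^{T-\tau_j}\ph)$ and the sampling variances costs only $o_\P(1)$ after multiplication by nothing (they are already $O(1)$ quantities appearing in the conditional variance) — this rests on $\tau_j\xrightarrow{\P}t_j$, the continuity of $t\mapsto\gamma_t$, $t\mapsto\eta_t$, and $t\mapsto Q^{T-t}\ph$ in the appropriate (weak, resp.\ uniform) sense, and on Assumption~\ref{ass:C} to ensure that a small time-error does not translate into a macroscopic change in the number of killed particles. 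Handling this carefully, and simultaneously verifying the conditional Lindeberg condition uniformly over the random stage boundaries, is where the bulk of the work lies; everything else is bookkeeping on the martingale array.
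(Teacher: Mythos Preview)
Your overall architecture---decompose $\sqrt N\,(\gamma_T^N(\ph)-\gamma_T(\ph))$ as a martingale along the branching stages and apply a Lindeberg-type CLT---is the same as the paper's. But there is one genuine gap and one mis-attribution in the variance identification.

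\textbf{The gap.} At each branching time $\tau_j$, the jump of the scaled martingale $\sqrt N\,\gamma^N_t(Q^{T-t}\ph)$ equals $\rho^N_{\tau_j^-}\Delta\calM_{\tau_j}$, which is a sum of $K_N\sim(1-\theta)N$ centered terms each of size $O(1/\sqrt N)$; hence the jump itself is $O(1)$, \emph{not} $o(1)$. If you treat each branching as a single increment in a martingale-difference array, the Lindeberg condition fails outright. The paper handles this with a ``time-stretching'' device (its tailor-made Proposition~\ref{pro:clt}): each $\tau_j$ is replaced by an artificial unit interval subdivided into $K_N$ pieces, and the $K_N$ individual resamplings are unfolded as $K_N$ separate small jumps along that interval, after which a continuous-time martingale CLT (Ethier--Kurtz) applies. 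You must either reproduce this unfolding, or equivalently refine the discrete filtration at each branching so that the $K_N$ resamplings become $K_N$ genuine martingale increments; your proposal does not say which, and the phrase ``standard CLT for triangular arrays'' together with ``verifying the conditional Lindeberg condition'' strongly suggests you have not seen the obstacle.

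\textbf{The mis-attribution.} You claim the branching step ``injects a variance of order $\tfrac1N(1/\theta-1)\theta^{2j}\eta_{t_j}(Q^{T-t_j}\ph)^2$'', i.e.\ a squared-mean contribution. This is false: conditionally on the surviving particles, the resampling variance is proportional to the empirical \emph{variance} $\V_{\eta^N_{\Alive_j}}(Q)$, not to the square of the mean (this is just the variance of an i.i.d.\ sample from the survivors; see Lemma~\ref{lem:delta}). The actual limiting contribution of branching $j$ is $\theta^{2j}(1-\theta)\V_{\eta_{t_j}}(Q)$, while the evolution on $(\tau_j,\tau_{j+1})$ contributes $\theta^j\big(\gamma_{t_{j+1}}(Q^2)-\gamma_{t_j}(Q^2)\big)$ (Lemma~\ref{lem:var2}). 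The three groups of terms displayed in the theorem arise only after a nontrivial algebraic rearrangement of these pieces; they do \emph{not} correspond one-to-one to ``last stage / evolution / branching'' as your sketch asserts. Your final numbers come out right only because you reverse-engineered the formula, not because the attributions are correct.
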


For classical Fleming-Viot particle systems, we have shown in \cite{cdgr2} that, under the same assumptions and denoting $\eta_t={\cal L}(X_t|t>\tau_\partial)$, the asymptotic variance takes the form
\begin{equation}\label{kabcxkbx}
\sigma_T^2(\ph)=p_T^2\V_{\eta_T}(\ph)-p_T^2\log(p_T)\eta_T(\ph)^2-2\int_0^T  \V_{\eta_{t}}\p{Q^{T-t}(\ph)}p_tdp_t.
\end{equation}

Returning to our variant and the result of Theorem~\ref{gamma}, since $\un_F\in{\cal D}$ by assumption, and $\gamma_T(\un_F)=p_T$, the CLT for $\eta_T^N / \eta_T^N(\un_F)$ is then a straightforward application of this result by considering the decomposition 
\begin{align*}
& \sqrt{N}\left( \frac{\eta_T^N\p{\ph}}{\eta_T^N(\un_F)}-\frac{\eta_T(\ph)}{\eta_T(\un_F)}\right)= \\
& \quad \frac{1}{\gamma_T^N(\un_F)} \sqrt{N}\left(\gamma_T^N(\ph-\eta_T\left(\ph\right)\un_F / \eta_T( \un_F) )-\gamma_T(\ph-\eta_T\left(\ph\right)\un_F / \eta_T( \un_F))\right),
\end{align*}
and the fact that $\gamma_T^N(\un_F)$ converges in probability towards $p_T=\gamma_T(\un_F)$.

\begin{Cor}\label{eta}
One has 
$$\sqrt{N}\left(p_T^N-p_T\right)\xrightarrow[N\to\infty]{d}{\cal N}(0,\sigma_T^2(\un_F)).$$

Additionally, for any $\ph$ in $\overline{\cal D}$, 
$$\sqrt{N}\left( \frac{\eta_T^N\p{\ph}}{\eta_T^N(\un_F)}-\frac{\eta_T(\ph)}{\eta_T(\un_F)}\right)  \xrightarrow[N\to\infty]{d}{\cal N}\p{0,\sigma_T^2(\ph-\eta_T\left(\ph\right)\un_F / \eta_T( \un_F)) / p_T^{2}}.$$
\end{Cor}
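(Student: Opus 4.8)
The plan is to deduce Corollary~\ref{eta} from Theorem~\ref{gamma} by two applications: one direct, one via a Slutsky-type argument. For the first statement, I would simply take $\ph=\un_F$, which lies in $\calD\subset\overline{\calD}$ by Condition~$(ii)$ of Assumption~\ref{ass:A}. Since $\gamma_T(\un_F)=p_T$ and $\gamma_T^N(\un_F)=p_T^N$ by the very definition of $p_T^N$ given in the introduction, Theorem~\ref{gamma} applied to $\ph=\un_F$ yields $\sqrt N(p_T^N-p_T)\xrightarrow{d}\calN(0,\sigma_T^2(\un_F))$ with no further work.

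For the second statement I would follow the decomposition already written in the excerpt just before the corollary. Write $\psi\eqdef\ph-\dfrac{\eta_T(\ph)}{\eta_T(\un_F)}\un_F$; since $\overline{\calD}$ is a closed subspace (in particular a vector space) containing $\un_F$, and $\ph\in\overline{\calD}$, we have $\psi\in\overline{\calD}$ as well, so Theorem~\ref{gamma} applies to $\psi$ and gives $\sqrt N(\gamma_T^N(\psi)-\gamma_T(\psi))\xrightarrow{d}\calN(0,\sigma_T^2(\psi))$. One checks $\gamma_T(\psi)=\gamma_T(\ph)-\dfrac{\eta_T(\ph)}{\eta_T(\un_F)}\gamma_T(\un_F)$; using $\gamma_T=\rho_T\eta_T$, this equals $\rho_T\eta_T(\ph)-\dfrac{\eta_T(\ph)}{\eta_T(\un_F)}\rho_T\eta_T(\un_F)=0$, so in fact $\sqrt N\,\gamma_T^N(\psi)\xrightarrow{d}\calN(0,\sigma_T^2(\psi))$. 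Meanwhile $\gamma_T^N(\psi)=\gamma_T^N(\ph)-\dfrac{\eta_T(\ph)}{\eta_T(\un_F)}\gamma_T^N(\un_F)=\gamma_T^N(\un_F)\Bigl(\dfrac{\gamma_T^N(\ph)}{\gamma_T^N(\un_F)}-\dfrac{\eta_T(\ph)}{\eta_T(\un_F)}\Bigr)$, and $\dfrac{\gamma_T^N(\ph)}{\gamma_T^N(\un_F)}=\dfrac{\rho_T^N\eta_T^N(\ph)}{\rho_T^N\eta_T^N(\un_F)}=\dfrac{\eta_T^N(\ph)}{\eta_T^N(\un_F)}$, so that
\[
\sqrt N\Bigl(\frac{\eta_T^N(\ph)}{\eta_T^N(\un_F)}-\frac{\eta_T(\ph)}{\eta_T(\un_F)}\Bigr)=\frac{1}{\gamma_T^N(\un_F)}\,\sqrt N\,\gamma_T^N(\psi).
\]
It remains to invoke Slutsky's lemma: since, as noted in the excerpt, $\gamma_T^N(\un_F)=p_T^N\xrightarrow{\P}p_T=\gamma_T(\un_F)>0$ (this convergence in probability is a consequence of the CLT just proved for $p_T^N$, or can be cited from the consistency part of the analysis), the factor $1/\gamma_T^N(\un_F)$ converges in probability to $1/p_T$, and multiplying the weakly convergent sequence $\sqrt N\,\gamma_T^N(\psi)$ by it gives convergence in distribution to $\calN(0,\sigma_T^2(\psi)/p_T^2)$, which is exactly the claimed limit since $\psi=\ph-\eta_T(\ph)\un_F/\eta_T(\un_F)$.

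There is essentially no obstacle here: the corollary is a formal consequence of Theorem~\ref{gamma}. The only points requiring a line of care are (a) confirming that $\overline{\calD}$ is a vector space stable under the affine combination defining $\psi$ — immediate since the $\norm{\cdot}_\infty$-closure of a vector space is a vector space and $\un_F\in\calD$; (b) the algebraic identity $\gamma_T^N(\ph)/\gamma_T^N(\un_F)=\eta_T^N(\ph)/\eta_T^N(\un_F)$, which holds because the common factor $\rho_T^N$ cancels (and $\rho_T^N>0$, $\eta_T^N(\un_F)>0$ under Assumptions~\ref{ass:A}--\ref{ass:B}); and (c) the positivity of the limit $p_T>0$, assumed from the outset, so that Slutsky applies cleanly.
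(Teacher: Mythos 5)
Your proof is correct and follows essentially the same route as the paper's: $\ph=\un_F$ for the first claim, and for the second the decomposition $\sqrt N\bigl(\eta_T^N(\ph)/\eta_T^N(\un_F)-\eta_T(\ph)/\eta_T(\un_F)\bigr)=\gamma_T^N(\un_F)^{-1}\sqrt N\bigl(\gamma_T^N(\psi)-\gamma_T(\psi)\bigr)$ with $\psi=\ph-\eta_T(\ph)\un_F/\eta_T(\un_F)$, combined with Slutsky and the consistency of $\gamma_T^N(\un_F)$. You add a couple of useful clarifying checks (that $\gamma_T(\psi)=0$, and that $\overline{\calD}$ is stable under the affine combination), but the substance is identical to the argument sketched in the paper just before the statement of the corollary.
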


Let us comment on the relative asymptotic variance of $p_T^N$, i.e., $\sigma_T^2(\un_F)/p_T^2$. Since by definition $p_T=r \theta^{\jmax}$, with $\jmax=\lfloor \log p_T / \log \theta \rfloor$ and $\theta<r<1$, we have $\eta_T(\un_F)=r$ and $\V_{\eta_T}(\un_F)=r(1-r)$.  Therefore, we obtain
$$\frac{\sigma_T^2(\un_F)}{p_T^2}=\jmax\frac{1-\theta}{\theta}+\frac{1-r}{r}+\frac{1}{p_T^2} \sum_{j=1}^{\jmax} \V_{\eta_{t_j}}\p{Q^{T-t_j}(\un_F)} \nonumber \p{\theta^{2j-1}-\theta^{2j+1}}. $$
In the latter, the variance terms may be reformulated as
$$\V_{\eta_{t_j}}\p{Q^{T-t_j}(\un_F)}=\V(\P(X_T\neq\partial|X_{t_j}))=\E\left[\left(\P(X_T\neq\partial|X_{t_j})-\frac{p_T}{p_{t_j}}\right)^2\right].$$
For each $1\leq j\leq \jmax$,  if $X_{t_j}\sim\eta_{t_j}$, $\P(X_T\neq\partial|X_{t_j})$ is a random variable with values between 0 and 1 and expectation $p_T/p_{t_j}$ so that the maximal variance is reached by a Bernoulli random variable with parameter $p_T/p_{t_j}$. As a consequence,
$$0\leq \V_{\eta_{t_j}}\p{Q^{T-t_j}(\un_F)}\leq \frac{p_T}{p_{t_j}}\left(1-\frac{p_T}{p_{t_j}}\right)=r\theta^{\jmax-j}\left(1-r\theta^{\jmax-j}\right).$$
Then a straightforward computation yields 
$$\jmax\frac{1-\theta}{\theta}+\frac{1-r}{r}\leq\frac{\sigma_T^2(\un_F)}{p_T^2}\leq\frac{1+\theta}{p_T}-\frac{\theta+r}{r}-\jmax(1-\theta).$$
On the one side, concerning the upper-bound, we see that
$$\lim_{\theta\to 1^-}\frac{1+\theta}{p_T}-\frac{\theta+r}{r}-\jmax(1-\theta)=2\frac{1-p_T}{p_T}+\log p_T.$$
Interestingly, considering (\ref{kabcxkbx}), this limit corresponds to the upper-bound obtained for classical Fleming-Viot particle systems, that is when $K=1$, as shown in \cite{cdgr2}. In particular, if $p_T$ is very low, this is approximately equal to $2(1-p_T)/p_T$. Clearly, this is twice the relative variance of a naive (or standard) Monte Carlo simulation
.  Consequently, one has to pay attention to the fact that, in some very specific situations, Fleming-Viot particle systems may lead to very poor estimators.\medskip

On the other side, the lower-bound already appeared in the context of Adaptive Multilevel Splitting (see for example \cite{cg2,cdfg,cg4}). This bound is reached when, for each $j$, starting with law $\eta_{t_j}$ at time $t_j$, the probability of being still alive at time $T$ is constant on the support of $\eta_{t_j}$. Everything happens as if one would estimate independently $\jmax$ times the probability $\theta$ and one time the probability $r$, all this being done by naive Monte Carlo with independent samples of common size $N$. Using standard tools, one can see that the resulting product estimator $\hat p_T^N=\hat p_{t_1}^N\dots\hat p_{t_\jmax}^N\hat r^N$ satisfies the CLT
$$\sqrt{N}\ \frac{\hat p_T^N-p_T}{p_T}\xrightarrow[N\to\infty]{d}{\cal N}\left(0,\jmax\frac{1-\theta}{\theta}+\frac{1-r}{r}\right),$$
which is exactly the above-mentioned lower-bound. 
\medskip

Additionally, since  $\jmax=\lfloor \log p_T / \log \theta \rfloor$ and $\theta<r=p_T\theta^{-\jmax}<1$, this lower-bound may be rewritten, for all $0< \theta<1$, as
$$h(\theta):=\left\lfloor \frac{\log p_T }{\log \theta} \right\rfloor \frac{1-\theta}{\theta}+\frac{\theta^{\left\lfloor \frac{\log p_T }{\log \theta} \right\rfloor}}{p_T}-1.$$
One can check that $h$ is non increasing on $(0,1)$, so that the minimal possible relative asymptotic  variance is simply
$\lim_{\theta\to 1^-}h(\theta)=-\log p_T$. As explained in \cite{cdgr2} and can be deduced from (\ref{kabcxkbx}), this precisely corresponds to the minimal relative asymptotic variance for classical Fleming-Viot particle systems, that is when $K=1$.\medskip

In view of this, one could argue that the best thing to do for estimating $p_T$ is to use classical Fleming-Viot particle systems. However, things are not so simple. First, because as far as we can judge, this is only the case when the variance terms $\V_{\eta_{t}}\p{Q^{T-t}(\un_F)}$ are zero at all time, which is a very particular situation. Second, because as for all Monte Carlo methods, one should not only compare the variances of different methods, but also their respective algorithmic complexities, or costs.\medskip

More explicitly, suppose that, by convention, the algorithmic cost for the simulation of a trajectory/particle until its killing is equal to 1. For classical Fleming-Viot particle systems, we have proved in \cite{cdgr2} that the number of resampling is in $O_p(-N\log p_T)$, so that the total cost is in $O_p(N(1-\log p_T))$, i.e., $N$ initial trajectories plus $O_p(-N\log p_T)$ rebranched ones. For synchronized Fleming-Viot particle systems, Remark \ref{rem:niter} ensures that the number of resamplings goes to $\jmax=\lfloor \log p_T / \log \theta \rfloor$ in probability. Since $K\sim(1-\theta)N$ particles are rebranched at each step, the total cost is asymptotically equivalent to 
$$N\left(1+\left\lfloor \frac{\log p_T }{\log \theta} \right\rfloor (1-\theta)\right),$$
which is less than $N(1-\log p_T)$ for any $0< \theta<1$. Beyond this lower algorithmic cost, it is also worth noting that synchronized Fleming-Viot particle systems can easily be parallelized, contrary to classical ones.
\section{Proof}\label{mazlco}
\subsection{Overview}
The probability space is filtered by the natural filtration of the particle system, denoted $\p{\calF_t}_{t\geq 0}$. Note that $\calF_t$ contains all the events related not only to the trajectories of the particles, but also all the auxiliary variables used for the resamplings, up to time $t$. \medskip

The key object of the proof is the c\`adl\`ag martingale 
\begin{equation*}
t \mapsto \gamma^N_t \p{Q} \eqdef \gamma^N_t \p{Q^{T-t}(\ph)},
\end{equation*}
the fixed parameters $T$ and $\ph$ being implicit in order to lighten the notation. Note that, since $\gamma^N_0=\eta^N_0$ and $\gamma_0=\eta_0$,
\begin{align*}
\gamma_T^N(\ph)-\gamma_T(\ph)=\Big(\gamma_T^N(Q)-\gamma^N_0(Q)\Big)
+\Big(\eta^N_0(Q^T(\ph))-\eta_0(Q^T(\ph))\Big)
\end{align*}
is the final value of the latter martingale, with the addition of a second term depending on the initial condition. This second term satisfies a CLT by assumption. We will handle the distribution of $\gamma_T^N(Q)$ in the limit $N\to\infty$ by using 
a Central Limit Theorem for continuous time martingales, namely Proposition \ref{pro:clt}. 
However, this requires several intermediate steps, mainly for the calculation of the quadratic variation $N[\gamma^N\p{Q},\gamma^N\p{Q}]_t$. In the sequel, we will make extensive use of stochastic calculus for \cadlag semimartingales, as presented for example in \cite{protter} chapter II,  or \cite{js03}.\medskip

%

%
%
%
%

 %
%
%
%

%
%
%
%
%
%

%
%
%
%
%
%
%
%
%
%
%
%
%
%

We adopt the standard notation $\Delta X_t=X_t-X_{t^-}$ and, to shorten  the notation, we will denote for $l= 1,2$,
\begin{align}\label{eq:M_t}
 \gamma^N_t \p{Q^l} \eqdef \gamma^N_t \p{ \b{Q^{T-t}(\ph)}^l}.
\end{align}
We will also denote, for each $1 \leq n \leq N$ and any $t\in[0,T]$,
\begin{align}
&\L^n_t \eqdef Q^{T-t}(\ph)(X_{t}^n),%
&\L_t \eqdef \frac{1}{N}\sum_{n=1}^N\L^n_t = \eta^N_t(Q),
\label{qlsjnclscn}
\end{align}
where, again, the fixed parameters $T$ and $\ph$ are implicit.

\subsection{Martingale decomposition}\label{sec:martdec}
Let us recall that $\tau_{j}$ denotes the $j$-th branching time of the particle system. We will need some additional notation related to the behavior of the particle system at each branching time.
\begin{Def}\label{def:tau} $\,$
\begin{itemize}
 \item(Individual indexation of branching times) For any $n \in \set{1, \ldots , N}$ and any $k \geq 0$, we denote by $$\tau_{n,k} \eqdef \, 
\text{ $k$-th rebirth (or branching) time of particle $n$},$$ with the convention $\tau_{n,0} =0$.
\item(Surviving particles) $X_{\tau_j}^{n,-}$, for $n\in\{1,\dots,N\}$, denotes the state of particle $n$ \emph{after} the last jump on $\partial$ of the last killed particle at $\tau_j$, but \emph{before} the resampling. We also denote 
\begin{align*}
\Alive_{j} &\eqdef \set{ \text{$(N-K)$ particles that are not resampled at time } \tau_{j} }\\
&:= \left\{ n\in\{1,\dots,N\}\ \text{s.t.}\ X_{\tau_j}^{n,-}\neq\partial \right\}.
\end{align*}
We may also use the individual indexation, for instance $\Alive_{n,k}$ is the set of particles that are not resampled at time $\tau_{n,k}$.
\item($\sigma$-field before resampling) For each branching time $\tau_j$ we also define $\calF_{\tau_j}^-\eqdef \calF_{\tau_j^-}\vee \sigma(X_{\tau_j}^{n,-}, n\in\{1,\dots,N\})$. We obviously have $\calF_{\tau_{j^-}}\subset \calF_{\tau_j}^- \subset \calF_{\tau_j}$.
\end{itemize}
\end{Def}

\begin{Rem}
\begin{enumerate}
\item In general, if $\k \geq 2$, the branching time $\tau_{n,k}$ is different from the $k$-th \emph{killing} time of particle $n$. However, note that the latter belongs to the time interval $(\tau_{n,k-1},\tau_{n,k}]$, the upper-bound being reached if $n$ is the last particle of $\set{1, \ldots, N} \setminus \Alive_{n,k}$ to be killed.
\item In the previous definition, the word ``Alive'' in ``$\Alive_{j}$'' is a slight abuse of terminology. Indeed, if for example $j=1$, all particles are alive at time $\tau_1$. At time $\tau_1^-$, $(K-1)$ are dead (i.e., equal to $\partial$) and $\tau_1$ is the $K$-th killing date, at which all of the  $K$ ``dead'' particles are instantaneously resampled, i.e., branched on the $(N-K)$ ``alive'' ones. This is illustrated on Figure \ref{fv4}. As we will see, there is exactly one particle for which $X_{\tau_j}^{n,-}$ is not equal to $X_{\tau_j^-}^{n}$, namely the particle which is killed at time $\tau_j$.
\end{enumerate}
\end{Rem}

This section builds upon the same martingale representation as in~\cite{v14}. Namely, we decompose the process $t \mapsto \gamma^N_t \p{Q}$ into the martingale contributions of the Markovian evolution of particle $n$ between branchings $k$ an $k+1$, which will be denoted $t \mapsto \M^{n,k}_t$, and the martingale contributions of the $k$-th branching of particle $n$, which will be denoted $t \mapsto \calM^{n,k}_t$.\medskip

\begin{Rem}
Throughout the paper, all the local martingales are local with respect to the sequence of stopping times $(\tau_j)_{j\geq 1}$. As required, this sequence of stopping times satisfies $\lim_{j\to\infty}\tau_j>T$ almost surely by Assumption \ref{ass:B}.
\end{Rem}

If $\widetilde{X}_t$ is any particle evolving according to the dynamics of the underlying Markov process for (and only for) $t < \tau_\partial$, then it is still true that $Q^{T-t}(\ph)(\widetilde{X}_t)\un_{t < \tau_\partial}$ is a martingale. As a consequence, for any $n\in\{1,\dots,N\}$ and any $k\ge 1$, Doob's optional sampling theorem ensures that, by construction of the particle system, the process
\begin{align}\label{lmdkcjm}
\M_t^{n,k} \eqdef \Big(\un_{t<\tau_{n,k}}\L_t^n-\L_{\tau_{n,k-1}}^n\Big)\un_{t\geq\tau_{n,k-1}}=
\begin{cases}
\dps 0 & \text{if } t < \tau_{n,k-1}  \\
\dps {\L^n_{t}} - {\L^n_{\tau_{n,k-1}}} &  \text{if }  \tau_{n,k-1} \leq t < \tau_{n,k} \\
\dps - {\L^n_{\tau_{n,k-1}}} &  \text{if }  \tau_{n,k} \leq t
\end{cases}
\end{align}
is a bounded martingale. Accordingly, under Assumption \ref{ass:B}, the processes
\begin{align}
& \M_t^{n} \eqdef\sum_{k = 1}^\infty \M_t^{n,k}=\L^n_t-\sum_{0 \leq \tau_{n,k}\leq t }\L^n_{\tau_{n,k}},\label{mbbtn}\\
& \M_t \eqdef\frac{1}{\sqrt{N}}\sum_{n=1}^N \M_t^{n}=\sqrt{N}\left(\L_t-\sum_{0 \leq \tau_{n,k}\leq t }\L_{\tau_{n,k}}\right),\label{mbbt}
\end{align}
are local martingales. The scaling by a $1/\sqrt{N}$ factor in the definition of $\M$ is there to ensure that the variance of the latter is of order $1$ in the large population limit.  \medskip

For any $n\in\{1,\dots,N\}$ and any $k\ge 1$, we also consider the process
\begin{equation}\label{aicj}
\calM_t^{n,k} \eqdef
\Big(1-\frac{\k}{N}\Big)\Big(\L^n_{\tau_{n,k}}-\frac1{N-\k} \sum_{m \in \Alive_{n,k}}\L^m_{\tau_{n,k}}\Big) 
\un_{t \geq \tau_{n,k}}. 
\end{equation}
By Lemma~\ref{albcios}, this is a piecewise constant martingale with a single jump at $t=\tau_{n,k}$, and it is clearly bounded by $2 \norm{\ph}_\infty$. Then, under Assumption \ref{ass:B}, the processes
\begin{align*}
&\calM_t^{n}\eqdef\sum_{k=1}^\infty\calM_t^{n,k}=\sum_{0 \leq \tau_{n,k}\leq t}\left(\L^n_{\tau_{n,k}} - \L_{\tau_{n,k}}\right),\\
& \calM_t\eqdef\frac{1}{\sqrt{N}}\sum_{n=1}^N\calM_t^n,
\end{align*}
are also local martingales. Again, the scaling by $1/\sqrt{N}$ is chosen to ensure that the variance of the latter is of order $1$ in the large population limit.

%

\begin{Lem}[About the jumps of the martingales]
\label{lem:jumps} 
Under Assumption~\ref{ass:A}: 
\begin{enumerate}[(i)]
\item For each $n$, the jumps of $\calM_t^n$ only happen at branching times, more precisely at times $\tau_{n,k}$ for $k \geq 1$.
\item For all $j\geq 1$, one has $\Delta \M^n_{\tau_j} = 0$ unless $n$ is the only particle in $\set{1, \ldots, N} \setminus \Alive_j$ that is killed exactly at time $\tau_j$. In any case, one has $$\Delta \M^n_{\tau_j} = - \un_{n \notin \Alive_j} \L^n_{\tau_j^-}.$$
\item If $m\neq n$, the jumps of $\M_t^m$ and $\M_t^n$ never happen at the same time.
\end{enumerate}
\end{Lem}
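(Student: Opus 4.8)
The plan is to analyze each of the three statements by tracking when and how the quantities $\L^n_t = Q^{T-t}(\ph)(X^n_t)$ change. Recall that $\calM^n_t = \sum_{0\le \tau_{n,k}\le t}(\L^n_{\tau_{n,k}} - \L_{\tau_{n,k}})$ is a sum of indicator-weighted increments supported exactly on the set of branching times $\{\tau_{n,k}\}_{k\ge 1}$, and similarly each $\M^{n,k}_t$ in \eqref{lmdkcjm} is built from the single martingale $t\mapsto Q^{T-t}(\ph)(\widetilde X_t)\un_{t<\tau_\partial}$, stopped and shifted. For (i), I would simply observe from the definition of $\calM^n_t$ (equivalently from \eqref{aicj}) that $\calM^n_t - \calM^n_{t^-}\ne 0$ forces $t$ to be one of the $\tau_{n,k}$: the process is piecewise constant with possible jumps only at these times. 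This is essentially immediate from the representation, so it is a one-line argument.

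For (ii), the key point is to identify exactly which term of the decomposition $\M^n_t = \sum_{k\ge 1}\M^{n,k}_t$ can jump at a \emph{branching time} $\tau_j$ (as opposed to at a killing time interior to $(\tau_{n,k-1},\tau_{n,k})$, where the underlying martingale $\L^n$ may jump). A branching time $\tau_j$ coincides with some $\tau_{n,k}$ only if particle $n$ was resampled at $\tau_j$, i.e. $n\notin\Alive_j$. In that case, from the third line of \eqref{lmdkcjm}, $\M^{n,k}_{\tau_j} = -\L^n_{\tau_{n,k-1}}$ while $\M^{n,k}_{\tau_j^-} = \L^n_{\tau_j^-} - \L^n_{\tau_{n,k-1}}$, so the jump is $\Delta\M^{n}_{\tau_j} = \Delta\M^{n,k}_{\tau_j} = -\L^n_{\tau_j^-}$. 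But I must be careful: $\L^n_{\tau_j^-}$ here is the value \emph{just before} the particle's own killing jump onto $\partial$, and since $\ph(\partial)=0$ the value $\L^n$ right after hitting $\partial$ is $0$; the resampling then overwrites it with the state of an Alive particle. The cleanest way is to use $X^{n,-}_{\tau_j}$ from Definition~\ref{def:tau} and the Remark following it: the only particle for which $X^{n,-}_{\tau_j}\ne X^n_{\tau_j^-}$ is the one killed exactly at $\tau_j$. So for every $n\notin\Alive_j$ except possibly one, $\L^n$ has not actually moved between $\tau_j^-$ and the pre-resampling instant, and in all cases the formula $\Delta\M^n_{\tau_j} = -\un_{n\notin\Alive_j}\L^n_{\tau_j^-}$ holds by direct substitution. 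If $n\in\Alive_j$, then $\tau_j$ is not among the $\tau_{n,k}$ and no $\M^{n,k}$ jumps there (the underlying martingale is continuous at $\tau_j$ for an Alive particle, since that particle is not being killed at $\tau_j$), giving $\Delta\M^n_{\tau_j}=0$.

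For (iii), the statement that $\M^m$ and $\M^n$ with $m\ne n$ never jump simultaneously is where Assumption~\ref{ass:A} genuinely enters, and I expect this to be the main obstacle. Between branchings the particles evolve independently, so the jumps of $\M^m$ and $\M^n$ coming from the underlying martingales $t\mapsto Q^{T-t}(\ph)(X^m_t)$ and $t\mapsto Q^{T-t}(\ph)(X^n_t)$ are jumps of independent \cadlag processes; by Condition $(ii)$ of Assumption~\ref{ass:A} each such martingale has jumps with atomless distribution, hence (conditionally on the past, using the Markov property and independence) two independent copies a.s.\ do not jump at the same time. The remaining cases are jumps at branching times: by (ii), $\M^n$ jumps at $\tau_j$ only if $n\notin\Alive_j$ and $n$ is the \emph{unique} particle killed exactly at $\tau_j$ — and this uniqueness is exactly what Condition $(i)$ of Assumption~\ref{ass:A} buys us, since the killing times have atomless distributions and hence, arguing conditionally at each successive killing, no two particles are killed at the very same instant. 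One must also rule out the coincidence of a branching-time jump of $\M^n$ with an interior (killing-induced) jump of $\M^m$ for $m$ still alive between its own branchings; again this is an a.s.\ impossibility because it would require the killing time of $n$ to coincide with a jump time of the independent \cadlag process driving $m$, which has atomless jump distribution. Assembling these sub-cases — interior/interior, interior/branching, branching/branching — with the appropriate conditioning on $\calF_{\tau_{j}^-}$ or on the relevant killing times, and invoking both parts of Assumption~\ref{ass:A} together with independence of particles between branchings, completes the proof.
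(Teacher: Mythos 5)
Your proof is correct and follows essentially the same approach as the paper: (i) and (ii) are established by direct computation from the definitions, and (iii) by ruling out coincident jumps of $\M^n$ and $\M^m$ at non-branching times via Assumption~\ref{ass:A} together with the conditional independence of particles between branchings, then invoking (ii) at branching times to see that only the uniquely killed particle's martingale jumps there. The paper presents (iii) more compactly by observing that the jump times of $\M^n$ lie in the union of the jump times of $\L^n$ and the branching times $\tau_{n,k}$ and then citing (ii), but your interior/interior, interior/branching, branching/branching case split is the same argument made explicit.
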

\begin{proof}
(i) and (ii) are direct consequences of the definitions of $\calM$ and $\M$. For (iii), by construction, the jumps of the martingales $\M_t^{n}$ are included in the union of the set of the jumps of $\L_t^n$ and the set of the branching times $\tau_{n,k}$. Thus, Assumption~\ref{ass:A} ensures that for $n\neq m$, the jumps of $\M_t^n$ and $\M_t^m$ could happen at the same time only if the latter is a branching time. However, by (ii), $\M_t^{n}$ may jump only in the case where $n$ is the unique particle killed exactly at $\tau_{n,k}$, in which case $\M^m_t$ does not jump, hence (iii).
\end{proof}

%
%

The upcoming result attests that the process $t\mapsto\gamma_t^N(Q)$ is indeed a martingale and details its decomposition.

\begin{Lem}\label{lem:decomp}
We have the decomposition
\begin{equation}\label{eq:decomp}
\gamma_t^N(Q) = \gamma_0^N(Q) +  \frac{1}{\sqrt{N}}\int_0^t \rho^N_{u^-} \p{ d\M_u + d\calM_u}.
\end{equation}
\end{Lem}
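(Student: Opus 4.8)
The plan is to decompose the increment of $t\mapsto\gamma_t^N(Q)$ into the contribution of the free Markovian evolution of the particles between branchings and the contribution of the branchings themselves, and to match these with the martingales $\M$ and $\calM$ respectively. First I would recall that $\gamma_t^N(Q)=\rho_t^N\,\eta_t^N(Q)=\rho_t^N\,\L_t$, where $\rho_t^N=(1-\tfrac{\k}{N})^{\calB_t}$ is a \cadlag, piecewise constant process that jumps only at the branching times $\tau_j$, with multiplicative jump factor $(1-\tfrac{\k}{N})$ at each such time. So $\rho^N$ has finite variation and $\L$ is a \cadlag semimartingale; the natural tool is the integration by parts (product) formula for \cadlag semimartingales (Protter, Ch.~II), namely
$$
d\gamma_t^N(Q) = \rho_{t^-}^N\, d\L_t + \L_{t^-}\, d\rho_t^N + d[\rho^N,\L]_t.
$$

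Next I would analyze each of the three terms on the right. For the first term, $d\L_t$: between branchings $\L_t=\tfrac1N\sum_n\L_t^n$ evolves as the average of the martingales driving the underlying process, which by \eqref{mbbtn}–\eqref{mbbt} is exactly the increment $\tfrac1{\sqrt N}\,d\M_t$, \emph{plus} the jumps of $\L$ that occur at branching times. The key point — and where I expect to have to be careful — is to reorganize those branching-time jumps of $\L$. At a branching time $\tau_j$, the process $\L$ jumps for two reasons: (a) the $\calB$-indexed sum $\sum_{0\le\tau_{n,k}\le t}\L_{\tau_{n,k}}$ in \eqref{mbbt} accounts for the reset of the killed particles' contribution, and (b) the surviving-vs-resampled reallocation is captured by $\calM$ via \eqref{aicj}. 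I would use Lemma~\ref{lem:jumps}(ii) to identify the unique particle whose $\M^n$ actually jumps at $\tau_j$ (the one killed exactly at $\tau_j$), and combine the jump of $\rho^N$ (term two) with the $[\rho^N,\L]$ bracket (term three), since $[\rho^N,\L]_t=\sum_{\tau_j\le t}\Delta\rho^N_{\tau_j}\Delta\L_{\tau_j}$ is a pure-jump finite-variation process supported on branching times. Writing $\Delta\rho^N_{\tau_j}=-\tfrac{\k}{N}\rho^N_{\tau_j^-}$ and assembling $\L_{\tau_j^-}\Delta\rho^N_{\tau_j}+\Delta\rho^N_{\tau_j}\Delta\L_{\tau_j}=\rho^N_{\tau_j^-}\big((1-\tfrac{\k}{N})\L_{\tau_j}-\L_{\tau_j^-}\big)$, one checks by a direct computation — using that after resampling the $K$ killed particles are redistributed on the $N-K$ alive ones, so that the post-branching average $\L_{\tau_j}$ equals $(1-\tfrac{\k}{N})^{-1}\big(\tfrac1N\sum_{m\in\Alive_j}\L^m_{\tau_j}\cdot\tfrac{N}{N-K}\cdot(1-\tfrac\k N)+\dots\big)$ — that this telescopes into precisely $\tfrac1{\sqrt N}\rho^N_{\tau_j^-}\,\Delta\calM_{\tau_j}$ together with the correction that turns the raw $d\L$ jumps into $d\M$ jumps. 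After this bookkeeping, all three terms collapse into $\tfrac1{\sqrt N}\rho^N_{u^-}(d\M_u+d\calM_u)$, and integrating from $0$ to $t$ with $\gamma_0^N(Q)=\eta_0^N(Q^T(\ph))$ gives \eqref{eq:decomp}.

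The main obstacle is the jump bookkeeping at branching times: one must verify carefully that the ``naive'' jumps of $\L=\eta^N(Q)$ at a time $\tau_j$ — coming from a particle landing on $\partial$ (where $Q^{T-t}(\ph)(\partial)=0$) and then being reassigned — are split \emph{exactly} into the $-\L^n_{\tau_j^-}$ jump absorbed into $\M^n$ (Lemma~\ref{lem:jumps}(ii)), the resampling reallocation absorbed into $\calM$ (Lemma~\ref{albcios}), and the deterministic rescaling absorbed into $\Delta\rho^N$; no residual term should survive. I would organize this as a lemma-level identity at a fixed $\tau_j$, conditionally on $\calF_{\tau_j}^-$, and then sum over $j\le\calB_t$. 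Everything else — the semimartingale product rule, the finite-variation structure of $\rho^N$, the vanishing of continuous co-variation since $\rho^N$ is pure jump — is routine given the cited stochastic calculus and Assumption~\ref{ass:B} guaranteeing $\calB_t<\infty$ a.s., which makes all the sums finite.
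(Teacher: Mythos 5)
Your proof plan matches the paper's: start from integration by parts for $\gamma^N_t(Q)=\rho^N_t\,\L_t$, observe that between branching times $\rho^N$ and $\calM$ are flat while $d\M_u=\sqrt N\,d\L_u$, and then verify the jump balance at each $\tau_j$. The paper carries this out by separately computing $\Delta\gamma^N_{\tau_j}(Q)=\rho^N_{\tau_j^-}\big[(1-\k/N)\L_{\tau_j}-\L_{\tau_j^-}\big]$, $\tfrac1{\sqrt N}\Delta\M_{\tau_j}=-\tfrac1N\sum_{n\notin\Alive_j}\L^n_{\tau_j^-}$ (Lemma~\ref{lem:jumps}), and $\tfrac1{\sqrt N}\Delta\calM_{\tau_j}=(1-\k/N)\L_{\tau_j}-\tfrac1N\sum_{n\in\Alive_j}\L^n_{\tau_j^-}$ (using that $\L^n$ does not jump at $\tau_j$ for $n\in\Alive_j$), and checking that the last two sum to the first.

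One concrete slip to fix: the display you wrote,
$$\L_{\tau_j^-}\Delta\rho^N_{\tau_j}+\Delta\rho^N_{\tau_j}\Delta\L_{\tau_j}=\rho^N_{\tau_j^-}\big((1-\tfrac{\k}{N})\L_{\tau_j}-\L_{\tau_j^-}\big),$$
is not an identity. The left-hand side equals $\Delta\rho^N_{\tau_j}\,\L_{\tau_j}=-\tfrac{\k}{N}\rho^N_{\tau_j^-}\L_{\tau_j}$, while the right-hand side is the \emph{full} jump $\Delta\gamma^N_{\tau_j}(Q)$; they differ by $\rho^N_{\tau_j^-}\Delta\L_{\tau_j}$, which is precisely the branching-time jump of the first integral $\int\rho^N_{u^-}\,d\L_u$ that your ``second plus third term'' bookkeeping omitted. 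Including that contribution restores the correct pointwise identity at $\tau_j$, after which the match with $\tfrac1{\sqrt N}\rho^N_{\tau_j^-}\big(\Delta\M_{\tau_j}+\Delta\calM_{\tau_j}\big)$ follows exactly as in the paper. The parenthetical computation about the post-branching value of $\L_{\tau_j}$ is also garbled, but it is inessential; the clean route is the paper's direct evaluation of $\Delta\calM_{\tau_j}$ in~\eqref{lajcaljcljsc}.
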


\begin{proof}
Considering (\ref{ljsnclnjc}) and (\ref{qlsjnclscn}), an integration by parts yields
$$\gamma_t^N(Q)=\rho_t^N\eta_t^N(Q^{T-t}(\ph))=\rho_t^N\L_t= \gamma_0^N(Q)+\int_0^t(\rho_{u^-}^Nd\L_u+\L_ud\rho_{u}^N),$$
where
$$\rho_t^N= \left(1-\tfrac{\k}{N}\right)^{\calB_t}= \left(1-\tfrac{\k}{N}\right)^{\sum_{j=1}^\infty\un_{\tau_j\leq t}}.$$
Hence, our goal is to show that 
$$\rho_{u^-}^Nd\L_u+\L_ud\rho_{u}^N =\frac{1}{\sqrt{N}}\rho^N_{u^-} \p{ d\M_u + d\calM_u}.$$
Between the branching times $\tau_j$, the result is obviously true since $\rho^N$ and $\calM$ are constant processes and, by (\ref{mbbt}), $d\M_u=\sqrt{N}d\L_u$.\medskip

 At branching time $\tau_j$, on the one hand, we get  by definition
 $$\Delta \gamma_{\tau_j}^N(Q) = \rho^N_{\tau_j^-} \frac1N \sum_{n=1}^N\p{ (1-\k/N) \L^n_{\tau_j}    - \L^n_{\tau_j^-}  }= \rho^N_{\tau_j^-}\left[(1-\k/N) \L_{\tau_j}    - \L_{\tau_j^-}\right].$$
 On the other hand, Lemma~\ref{lem:jumps} gives
$$ \frac{1}{\sqrt{N}} \Delta \M_{\tau_j}  = - \frac{1}{N} \sum_{n \notin \Alive_j} \L^n_{\tau_j^-}.$$
Note that, in the latter, only the particle that is killed at time $\tau_j$ contributes to the sum. Moreover, in a similar fashion, 
 \begin{align}\label{lajcaljcljsc}
 \frac{1}{\sqrt{N}} \Delta \calM_{\tau_j} & = \frac1N (1-\k/N) \p{ \sum_{n \notin \Alive_j} \L^n_{\tau_j}   - \frac{\k}{N-\k} \sum_{n \in \Alive_j} \L^n_{\tau_j}}.
 \end{align}
 By Assumption~\ref{ass:A}, $\L^n$ does not jump at $\tau_j$ if $n \in  \Alive_j$, so that
 \begin{align*}
 \frac{1}{\sqrt{N}} \Delta \calM_{\tau_j} & = \frac1N (1-\k/N) \p{ \sum_{n} \L^n_{\tau_j} - \sum_{n \in \Alive_j} \L^n_{\tau_j^-}   - \frac{\k}{N-\k} \sum_{n \in \Alive_j} \L^n_{\tau_j^-} }  \\
 & = \frac1N (1-\k/N) \sum_{n} \L^n_{\tau_j} - \frac1N \sum_{n \in \Alive_j} \L^n_{\tau_j^-}\\
 & = (1-\k/N) \L_{\tau_j} - \frac1N \sum_{n \in \Alive_j} \L^n_{\tau_j^-}.
 \end{align*}
 Computing the sum $\Delta \M_{\tau_j}+\Delta \calM_{\tau_j}$  yields the result.
\end{proof}

%
%
%
%

%
\subsection{Quadratic variation analysis}\label{sec:martquad}

The remarkable fact is that the $2N$ martingales $\set{\M^n_t,\calM^m_t}_{1 \leq n,m \leq N}$ are mutually orthogonal. We recall that two local martingales are orthogonal if and only if their quadratic covariation is again a local martingale.
 
 \begin{Lem} \label{Lem:quad} Under Assumptions~\ref{ass:A} and~\ref{ass:B}, the $N^2$ local martingales $\set{\M^n_t,\calM^m_t}_{1 \leq n,m \leq N}$ are mutually orthogonal. As a consequence,
$$\b{ \calM,\calM}_t = \frac{1}{N}\sum_{n=1}^N\b{ \calM^n,\calM^n}_t + \text{local martingale},$$
$\b{ \M,\cal M}_t$ is a local martingale, and
\begin{align*} 
\b{ \M,\M}_t = \frac{1}{N}\sum_{n=1}^N\b{ \M^n,\M^n}_t + \text{local martingale}. 
\end{align*}
In what follows, we adopt the notation
$$
\A_t \eqdef \frac{1}{N}\sum_{n=1}^N\b{ \M^n,\M^n}_t .
$$
The jumps of $\A$ are controlled by
\begin{align*}
\Delta\A_t \leq \frac{\|\ph\|_\infty^2}{N}.
\end{align*}
\end{Lem}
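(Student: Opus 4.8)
The plan is to establish the three assertions of Lemma~\ref{Lem:quad} in order: first mutual orthogonality of the $2N$ martingales $\set{\M^n,\calM^m}$, then deduce the three displayed identities by ``polarization plus symmetry'', and finally bound the jumps of $\A_t$.

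\textbf{Orthogonality.} Recall that two local martingales $Y,Z$ are orthogonal iff $[Y,Z]$ is a local martingale, and that $[Y,Z]_t = \sum_{s\leq t}\Delta Y_s\,\Delta Z_s$ whenever one of the two is a pure-jump process of finite variation --- which is the case for all the $\calM^m$ (piecewise constant, jumps only at the $\tau_{m,k}$) and, after summing over $k$, for the ``killing parts'' of $\M^n$ as well, since by Lemma~\ref{lem:jumps}(ii) $\M^n$ only jumps at the unique branching time at which particle $n$ is the last killed. I would treat the three types of pairs separately. (a) $[\M^n,\M^m]$ for $n\neq m$: by Lemma~\ref{lem:jumps}(iii) the jumps never coincide, and the continuous parts $\langle (\M^n)^c,(\M^m)^c\rangle$ vanish because, conditionally on the past, particles $n$ and $m$ evolve independently between branchings, hence $[\M^n,\M^m]\equiv 0$. (b) $[\M^n,\calM^m]$: the jumps of $\calM^m$ sit at $\tau_{m,k}$; at such a time, either $m\in\Alive_{m,k}$ is impossible (it is resampled), or if the jump instant is also a jump instant of $\M^n$ then $n$ is the lone particle killed at that time, so $n\notin\Alive$, and then conditionally on $\calF^-_{\tau_{m,k}}$ the resampling target of particle $m$ is uniform on $\Alive_{m,k}$, which makes $\E[\Delta\calM^m_{\tau_{m,k}}\mid\calF^-]=0$ while $\Delta\M^n_{\tau}$ is $\calF^-$-measurable; summing the compensated increments shows $[\M^n,\calM^m]$ is a local martingale. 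The case $n=m$ is handled the same way. (c) $[\calM^n,\calM^m]$ for $n\neq m$: their jumps can only coincide at a common branching time $\tau_j$; conditionally on $\calF^-_{\tau_j}$ the two killed particles $n,m$ (if both are killed, i.e.\ $K\geq 2$) are rebranched \emph{independently and uniformly} on $\Alive_j$, so $\E[\Delta\calM^n_{\tau_j}\Delta\calM^m_{\tau_j}\mid\calF^-_{\tau_j}]=\E[\Delta\calM^n\mid\calF^-]\,\E[\Delta\calM^m\mid\calF^-]=0$; hence $[\calM^n,\calM^m]$ is a local martingale. This last computation, exploiting conditional independence of the rebranchings, is the crux of why the \emph{synchronized} system still yields orthogonality, and is the step I expect to require the most care (one must be careful that ``independently chosen'' in Definition~\ref{def:ips} is genuinely used, and that the single particle for which $X^{n,-}_{\tau_j}\neq X^n_{\tau_j^-}$ does not spoil the bookkeeping).

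\textbf{The three identities.} Granting orthogonality, write $\M=\tfrac1{\sqrt N}\sum_n\M^n$, $\calM=\tfrac1{\sqrt N}\sum_m\calM^m$; then by bilinearity $[\M,\M]_t=\tfrac1N\sum_{n,m}[\M^n,\M^m]_t=\tfrac1N\sum_n[\M^n,\M^n]_t+\tfrac1N\sum_{n\neq m}[\M^n,\M^m]_t$, and the cross terms form a local martingale by orthogonality, giving $[\M,\M]_t=\A_t+\text{local martingale}$. The same algebra gives $[\calM,\calM]_t=\tfrac1N\sum_n[\calM^n,\calM^n]_t+\text{loc.\ mart.}$ and $[\M,\calM]_t=\tfrac1N\sum_{n,m}[\M^n,\calM^m]_t$, which is a sum of local martingales, hence itself a local martingale.

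\textbf{Jump bound for $\A$.} Since $\A_t=\tfrac1N\sum_n[\M^n,\M^n]_t$ and, between branchings, $[\M^n,\M^n]$ is the quadratic variation of the bounded martingale $t\mapsto Q^{T-t}(\ph)(X^n_t)$ on a killing-free stretch, the jumps $\Delta\A_{\tau_j}$ come only from the jump of $[\M^n,\M^n]$ at the branching time $\tau_j$, i.e.\ from $(\Delta\M^n_{\tau_j})^2$. By Lemma~\ref{lem:jumps}(ii), $\Delta\M^n_{\tau_j}=-\un_{n\notin\Alive_j}\L^n_{\tau_j^-}$ is nonzero for at most one index $n$ (the lone particle killed at $\tau_j$), and $|\L^n_{\tau_j^-}|=|Q^{T-\tau_j}(\ph)(X^n_{\tau_j^-})|\leq\|\ph\|_\infty$ because $\|Q^s\ph\|_\infty\leq\|\ph\|_\infty$. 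Therefore $\Delta\A_{\tau_j}\leq\tfrac1N(\Delta\M^n_{\tau_j})^2\leq\tfrac{\|\ph\|_\infty^2}{N}$, which is the claimed bound. (Any jump of $[\M^n,\M^n]$ coming from a jump of the underlying path $X^n$ at a non-branching time is likewise bounded by $\tfrac{\|\ph\|^2_\infty}{N}$ by the same estimate, so the bound holds uniformly in $t$.)
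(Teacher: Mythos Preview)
Your proposal is essentially correct and follows the same three-case structure as the paper (the paper treats $[\calM^n,\calM^m]$, then $[\M^n,\calM^m]$, then $\M^n\M^m$; you do the reverse order, but the content is the same). Two minor points are worth flagging.

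First, in case (a) you assert $[\M^n,\M^m]\equiv 0$ outright, writing that ``the continuous parts $\langle(\M^n)^c,(\M^m)^c\rangle$ vanish because, conditionally on the past, particles $n$ and $m$ evolve independently between branchings''. This is a stronger statement than the paper makes --- it only argues that $\M^n\M^m$ is a local martingale (deferring the details to Lemma~3.9 of~\cite{cdgr2}) --- and your justification for the continuous part is incomplete as stated: conditional independence gives that the \emph{product} is a (local) martingale on each inter-branching interval, from which one can then deduce that $[(\M^n)^c,(\M^m)^c]$, being a continuous finite-variation local martingale, is constant. So the route is the same as the paper's; just be careful not to invoke the vanishing of the bracket as if it were an immediate consequence of independence.

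Second, your opening sentence suggests that $\M^n$ ``only jumps at the unique branching time at which particle $n$ is the last killed''. That is not right: $\M^n$ inherits every jump of $\L^n$ between branchings (including the death of particle $n$, which may occur strictly before the branching time). This slip is harmless for your argument, since in (b) you only need $\calM^m$ to be pure-jump finite-variation for the identity $[\M^n,\calM^m]_t=\sum_{s\leq t}\Delta\M^n_s\Delta\calM^m_s$.

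The jump-bound argument matches the paper's: Assumption~\ref{ass:A} guarantees at most one $\M^n$ jumps at any given time, so $\Delta\A_t=\tfrac1N\max_n(\Delta\M^n_t)^2\leq\|\ph\|_\infty^2/N$.
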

\begin{proof}
i) Let us show that $\b{\calM^n,\calM^m}_t$ is a local martingale for $n \neq m$. Indeed, $\calM^n$ is piecewise constant outside the branching times so that
$$
\b{\calM^n,\calM^m}_t = \sum_{j} \un_{t \geq \tau_j} \Delta \calM^n_{\tau_j} \Delta\calM^m_{\tau_j},
$$
where, by definition, $\Delta \calM^n_{\tau_j} = 0$ if $n \in \Alive_{j}$ while, otherwise, 
$$
\Delta \calM^n_{\tau_j} = \Big(1-\frac{\k}{N}\Big)\Big(\L^n_{\tau_{j}}-\frac1{N-\k} \sum_{m \in \Alive_{j}}\L^m_{\tau_{j}^-}\Big)
$$
which by construction has zero average conditionally on $\calF_{\tau_j}^-$ (uniform resampling among the living particles). In the same way, $\Delta \calM^n_{\tau_j}$ and $\Delta \calM^m_{\tau_j}$ are independent for $n \neq m$ conditionally on $\calF_{\tau_j}^-$, 
by conditional independence of the resampling of killed particles. As a consequence,
$$
\E \b{\left. \Delta \calM^n_{\tau_j} \Delta \calM^m_{\tau_j}  \right| \calF_{\tau_j^-} } = \E\left[ \left. \E \b{\left. \Delta \calM^n_{\tau_j} \Delta \calM^m_{\tau_j}  \right| \calF_{\tau_j}^- } \right| \calF_{\tau_j^-} \right]=0,
$$
and Lemma~\ref{albcios} allows us to conclude the proof of Step~i). \medskip

ii) We claim that $\b{\M^n,\calM^m}_t$ is a local martingale. Since $\calM^m$ is a pure jump martingale that only jumps at branching times, we have
$$ \b{ \M^{n},\calM^{m}}_t = \sum_{j} \un_{t \geq \tau_j}  \Delta\M^n_{\tau_j} \Delta \calM^m_{\tau_j}.$$ 
As explained in Lemma \ref{lem:jumps}, $\Delta \M^n_{\tau_j}$ can be non zero only when particle $n$ is the single particle killed exactly at time $\tau_j$. Specifically, we have
$$
\Delta \M^n_{\tau_j} = - \un_{n \notin \Alive_j} \L^n_{\tau_j^-},
$$
which is measurable with respect to $\calF_{\tau_j}^-$. Consequently,
$$
\E \b{ \left. \Delta\M^n_{\tau_j} \Delta \calM^m_{\tau_j}  \right| \calF_{\tau_j^-} } = \E\left[ \left. - \un_{n \notin \Alive_j} \L^n_{\tau_j^-}\E \b{ \left. \Delta \calM^m_{\tau_j}  \right| \calF_{\tau_j}^- }     \right|    \calF_{\tau_j^-}   \right] = 0,
$$
and Lemma~\ref{albcios} concludes the proof of Step~ii). \medskip

%

iii) We claim that the product  $\M^n \M^m$ is a local martingale for $n\neq m$. The proof is similar to the one of Lemma 3.9 in~\cite{cdgr2}. Let us just briefly mention that it relies on the following facts: $\M^n$ and $\M^m$ are by construction independent between branching times (conditionally on the past), and never jump simultaneously, even at branching times by Lemma \ref{lem:jumps}.\medskip

%

For the last point, Assumption~\ref{ass:A} guarantees that
$$\Delta\A_t=\frac{1}{N}\max_{1\leq n\leq N}\Delta[\M^n,\M^n]_t=\frac{1}{N}\max_{1\leq n\leq N}\left(\Delta\M_t^n\right)^2,$$
and the indicated result is now a direct consequence of (\ref{lmdkcjm}) and (\ref{mbbtn}).
\end{proof}

Our next objective is to calculate the quadratic variation $\frac1N \sum_n \b{\calM^n, \calM^n}$. Following~\eqref{eq:M_t}, and remarking that $\L^n_{\tau_{j}}=\L^n_{\tau_{j}^-}$ for all $n \in \Alive_j$ by Lemma~\ref{lem:jumps}, we also adopt  the upcoming notation.

\begin{Not}\label{not:not_var_emp} The empirical distribution of the particles that are ``alive'' at branching time $\tau_j$ is denoted
$$
\eta^N_{\Alive_j} \eqdef \frac{1}{N-\k} \sum_{n \in \Alive_j} \delta_{X^n_{\tau_j}} .
$$
Accordingly, we have
\begin{align*}
 \Var_{\eta^N_{\Alive_j}}(Q) = \Var_{\eta^N_{\Alive_j}}(Q^{T-\tau_j}(\ph))
  = \frac{1}{N-K} \sum_{n \in \Alive_j}  \left[\L_{\tau_j^-}^n - \tfrac{1}{N-K} \sum_{m \in \Alive_j}  \L_{\tau_j^-}^m \right]^2.
\end{align*}
Mutatis mutandis, $\Var_{\eta^N_{\Alive_j}}(Q^2)$ is defined in the same manner.
\end{Not}

Note that $\Var_{\eta^N_{\Alive_j}}(Q)$ and $\Var_{\eta^N_{\Alive_j}}(Q^2)$ are measurable with respect to $\calF_{\tau_j}^-$.

\begin{Lem}\label{lemab}
There exists a piecewise constant local martingale $\tcalM_t$ with jumps at branching times, such that
  \begin{align*}%
\b{\calM,\calM}_t
&= \p{1-\frac{\k}{N}}^2 \frac{\k}{N} \sum_{j \geq 1} \un_{\tau_j \leq t} \Var_{\eta^N_{\Alive_j}}(Q) + \frac{1}{\sqrt{N}} \tcalM_t. 
 \end{align*}

%
%

Since $\Var_{\eta^N_{\Alive_j}}(Q) \leq 2 \norm{\ph}_{\infty}^2$, we deduce that
 \begin{align}\label{mrondcroch}
  d \b{\calM,\calM}_t \le  2 \p{1-\frac{\k}{N}}^2 \frac{\k}{N} \|\ph\|_\infty^2 d\calB_t + \text{local martingale}.
  \end{align}
\end{Lem}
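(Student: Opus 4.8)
The plan is to compute the quadratic variation of $\calM$ by exploiting the mutual orthogonality of the family $\set{\calM^n}_{1\leq n\leq N}$ established in Lemma~\ref{Lem:quad}, which reduces the problem to computing $\frac1N\sum_n\b{\calM^n,\calM^n}_t$ plus a local martingale. Since each $\calM^n$ is a pure jump process with jumps only at the branching times $\tau_{n,k}$ (equivalently, at those $\tau_j$ with $n\notin\Alive_j$), we have $\b{\calM^n,\calM^n}_t=\sum_{j}\un_{t\geq\tau_j}\p{\Delta\calM^n_{\tau_j}}^2$, and the key quantity to control is the conditional expectation $\E\b{\left.\p{\Delta\calM^n_{\tau_j}}^2\right|\calF_{\tau_j}^-}$, summed over $n$.

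First I would fix a branching time $\tau_j$ and analyze the resampling step conditionally on $\calF_{\tau_j}^-$. There are exactly $K$ killed particles; each is independently and uniformly assigned the state of one of the $N-K$ alive particles. For a killed particle $n$, its post-resampling value $\L^n_{\tau_j}$ is, conditionally on $\calF_{\tau_j}^-$, a uniform draw from $\set{\L^m_{\tau_j^-}:m\in\Alive_j}$, so $\Delta\calM^n_{\tau_j}=\p{1-\k/N}\p{\L^n_{\tau_j}-\frac1{N-\k}\sum_{m\in\Alive_j}\L^m_{\tau_j^-}}$ has conditional mean zero and conditional second moment $\p{1-\k/N}^2\Var_{\eta^N_{\Alive_j}}(Q)$, using Notation~\ref{not:not_var_emp}. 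Summing over the $K$ killed particles (those $n\notin\Alive_j$) gives $\sum_{n\notin\Alive_j}\E\b{\left.\p{\Delta\calM^n_{\tau_j}}^2\right|\calF_{\tau_j}^-}=K\p{1-\k/N}^2\Var_{\eta^N_{\Alive_j}}(Q)$, while the alive particles contribute zero since $\Delta\calM^n_{\tau_j}=0$ for $n\in\Alive_j$. Dividing by $N$ and summing over $j$ with $\tau_j\leq t$ yields the announced predictable-looking term $\p{1-\k/N}^2\frac{\k}{N}\sum_{j\geq1}\un_{\tau_j\leq t}\Var_{\eta^N_{\Alive_j}}(Q)$.

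The remaining step is to show the difference between $\frac1N\sum_n\b{\calM^n,\calM^n}_t$ and this term is $\frac1{\sqrt N}\tcalM_t$ for a piecewise constant local martingale $\tcalM$ with jumps at branching times. This is where I would be careful: the difference at $\tau_j$ is $\frac1N\sum_{n\notin\Alive_j}\p{\p{\Delta\calM^n_{\tau_j}}^2-\E\b{\left.\p{\Delta\calM^n_{\tau_j}}^2\right|\calF_{\tau_j}^-}}$, which has zero conditional mean given $\calF_{\tau_j}^-$, and hence — using that $\Var_{\eta^N_{\Alive_j}}(Q)$ is $\calF_{\tau_j}^-$-measurable and invoking Lemma~\ref{albcios} (the tower property against $\calF_{\tau_j^-}$, as in the proof of Lemma~\ref{Lem:quad}) — the cumulative sum of these increments is a local martingale. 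The $1/\sqrt N$ scaling is the natural one: each summand is $O(1/N)$ in magnitude but there are $O(K)=O(N)$ of them with conditionally independent fluctuations, so the centered sum is $O(1/\sqrt N)$; to match the statement's normalization one simply factors out $1/\sqrt N$ and calls what remains $\tcalM_t$. The main obstacle is the bookkeeping around the filtration $\calF_{\tau_j}^-$ versus $\calF_{\tau_j^-}$ and making rigorous that the resulting process is genuinely a local martingale (localizing along $(\tau_j)_{j\geq1}$, which exits $[0,T]$ a.s.\ by Assumption~\ref{ass:B}) rather than merely a process with conditionally centered jumps.

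Finally, the bound \eqref{mrondcroch} is immediate: $0\leq Q^{T-\tau_j}(\ph)(x)\leq\norm{\ph}_\infty$ for all $x$, so any empirical variance of $Q$ is at most $\norm{\ph}_\infty^2$, and in fact the crude bound $\Var_{\eta^N_{\Alive_j}}(Q)\leq2\norm{\ph}_\infty^2$ noted in Notation~\ref{not:not_var_emp} suffices; substituting this into the exact expression and recognizing $\sum_{j\geq1}\un_{\tau_j\leq t}=\calB_t$ (so that $\sum_{j\geq1}\un_{\tau_j\leq t}(\,\cdot\,)$ becomes $d\calB_t$ integrated) gives $d\b{\calM,\calM}_t\leq2\p{1-\k/N}^2\frac{\k}{N}\norm{\ph}_\infty^2\,d\calB_t+\text{local martingale}$, absorbing $\frac1{\sqrt N}\,d\tcalM_t$ into the local martingale term.
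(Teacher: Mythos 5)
Your proof is correct and follows essentially the same route as the paper: use the orthogonality from Lemma~\ref{Lem:quad} to reduce to $\frac1N\sum_n[\calM^n,\calM^n]_t$, compute $\E\bigl[(\Delta\calM^n_{\tau_j})^2\,\big|\,\calF_{\tau_j}^-\bigr]=(1-\k/N)^2\Var_{\eta^N_{\Alive_j}}(Q)$ for each killed particle, sum over the $\k$ killed particles, and absorb the centered remainder into a local martingale via Lemma~\ref{albcios}. The only cosmetic differences are the Chebyshev heuristic for the $1/\sqrt N$ scaling (which the paper simply takes as a normalization convention) and a harmless slip in asserting $Q^{T-\tau_j}(\ph)\geq 0$ (only $|Q^{T-\tau_j}(\ph)|\leq\norm{\ph}_\infty$ is available, but that already gives the stated bound).
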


\begin{proof}
Considering the orthogonality property in Lemma \ref{Lem:quad}, and taking into account that the martingales $\calM^{n,k}$ are piecewise constant with a single jump at time $\tau_{n,k}$, we have
$$\b{\calM,\calM}_t
=\frac1N\sum_{n=1}^N\sum_{j=1}^{+\infty} \p{\Delta \calM^{n}_{\tau_{j}}}^2\un_{t\geq\tau_{j}}.$$

We can then define 
\begin{align*}
&\tcalM_t:=\frac{1}{\sqrt N}\sum_{n=1}^N\sum_{j=1}^{+\infty}\p{\big(\Delta  \calM^{n}_{\tau_{j}}\big)^2 
-  \E\left[\big(\Delta  \calM^{n}_{\tau_{j}}\big)^2  \big| \calF_{\tau_{j}}^- \right] }\un_{t \geq \tau_{j}}
 \end{align*}
 which is indeed a local martingale by Lemma~\ref{lem:jumps} and Lemma \ref{albcios}. Recall that $\Delta \calM^{n}_{\tau_{j}} = 0$ if $n \in \Alive_j$. Otherwise, by construction of the branching rule, we obtain

 \begin{align*}
\E\left[\big( \Delta \calM^{n}_{\tau_{j}}\big)^2 \big| \calF_{\tau_{j}}^- \right]
&=\p{1-\tfrac{\k}{N}}^2 \tfrac1{ N-\k } \sum_{m\in \Alive_j}\left(\L^m_{\tau_{j}^-}- \tfrac{1}{N-\k} \sum_{l \in \Alive_j}  \L^l_{\tau_{j}^-}\right)^2 \\
& = \p{1-\tfrac{\k}{N}}^2 \Var_{\eta^N_{\Alive_j}}(Q) ,
\end{align*}
which is independent of the choice of the resampled particle $n$. Since there are exactly $\k$ resampled particles at time $\tau_j$, this yields

$$
\frac1N \sum_{n=1}^N \E\left[\big( \Delta \calM^{n}_{\tau_{j}}\big)^2 \big| \calF_{\tau_{j}}^- \right]
= \tfrac{\k}{N} \p{1-\tfrac{\k}{N}}^2 \Var_{\eta^N_{\Alive_j}}(Q) , 
$$
hence the result.
\end{proof}

The next lemma is a crucial step of the analysis. It relates the quadratic variation of the local martingale $t \mapsto \M_t$ - given, up to a martingale additive term, by the increasing process $t \mapsto \A_t$ defined in Lemma~\ref{lem:decomp} -, with the process $t \mapsto \gamma^N_t(Q^2)$. This leads to estimates on $\A_t$. This idea is inspired by the fact that, by definition of the quadratic variation, and for any Markov process $X$, the process $t \mapsto \b{Q^{T-t}(\ph)(X_t)}^2$ equals the quadratic variation of the martingale $t \mapsto Q^{T-t}(\ph)(X_t)$ up to a martingale additive term.

\begin{Lem}\label{lemgQ2}
  One has the decomposition
  \begin{align}\label{lemgQ21}
  d\gamma^N_t(Q^2) =  \rho_{t^-}^N d\A_t+ \frac{1}{\sqrt{N}} \rho_{t^-}^N  d\tM_t ,
  \end{align}
  where $(\tM_t)_{t \geq 0}$ is a local martingale satisfying
  \begin{align}
  \E\left[\int_0^t \rho_{u^-}^Nd[\tM,\tM]_u\right]\le  6 \|\ph\|_\infty^4.  \label{lemgQ24}
  \end{align}
\end{Lem}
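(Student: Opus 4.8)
The plan is to mimic, at the level of the particle system, the elementary identity that for a single Markov trajectory the process $t\mapsto \bigl[Q^{T-t}(\ph)(X_t)\bigr]^2$ differs from the quadratic variation of the martingale $t\mapsto Q^{T-t}(\ph)(X_t)$ by a (local) martingale additive term. Concretely, I would start from the integration-by-parts representation $\gamma^N_t(Q^2)=\rho^N_t\,\L^{(2)}_t$, where $\L^{(2)}_t\eqdef\eta^N_t(Q^2)=\frac1N\sum_n(\L^n_t)^2$, and expand it exactly as in the proof of Lemma~\ref{lem:decomp}: between branching times $\rho^N$ is constant and the dynamics is the free Markovian evolution of the $N$ particles, while at each $\tau_j$ there is a jump coming both from $\rho^N$ and from the resampling. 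First I would treat the continuous-in-between part: for each fixed particle $n$, $t\mapsto(\L^n_t)^2$ is a semimartingale whose bounded-variation part is exactly $d\b{\M^n,\M^n}_t$ (the quadratic variation of the martingale $\L^n$ on $(\tau_{n,k-1},\tau_{n,k})$), the remainder being a local martingale bounded in the relevant sense by $\|\ph\|_\infty^2$. Averaging over $n$ and multiplying by $\rho^N_{t^-}$ produces the $\rho^N_{t^-}d\A_t$ term plus $\frac1{\sqrt N}\rho^N_{t^-}$ times a local martingale.

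Next I would handle the jump at a branching time $\tau_j$. Here $\Delta\gamma^N_{\tau_j}(Q^2)=\rho^N_{\tau_j^-}\bigl[(1-\k/N)\L^{(2)}_{\tau_j}-\L^{(2)}_{\tau_j^-}\bigr]$, and using Lemma~\ref{lem:jumps} (the $\L^n$ do not jump at $\tau_j$ for $n\in\Alive_j$) one can rewrite this increment, conditionally on $\calF^-_{\tau_j}$, as a sum of two contributions: its $\calF^-_{\tau_j}$-conditional expectation — which, together with the in-between bounded-variation part, should reassemble into $\rho^N_{t^-}d\A_t$ (this is the step where the scaling factors $(1-\k/N)$ versus $\k/N$ have to match, exactly as the computation of $\Delta\M_{\tau_j}+\Delta\calM_{\tau_j}$ matched in Lemma~\ref{lem:decomp}) — plus a centered part, which I fold into $\tM_t$. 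So $\tM_t$ is the sum of the "free-evolution" local martingale above and the purely discrete local martingale $\frac1{\sqrt N}\sum_n\sum_j\bigl(\text{centered jump at }\tau_j\bigr)\un_{t\geq\tau_j}$; both are genuine local martingales (localized along $(\tau_j)$) by Lemma~\ref{albcios} and Assumption~\ref{ass:B}, as in Lemma~\ref{lemab}.

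Finally I would prove the moment bound \eqref{lemgQ24}. Since $\tM$ is the sum of two orthogonal-in-time pieces (a continuous-between-branchings part and a pure-jump-at-branchings part, whose jump sets are disjoint), $[\tM,\tM]_t$ splits accordingly. For the free-evolution part, $d[\tM,\tM]_u$ on $(\tau_{n,k-1},\tau_{n,k})$ is controlled by $\frac1N\sum_n d[(\L^n)^2,(\L^n)^2]_u$; since $|\L^n|\le\|\ph\|_\infty$, each increment of $(\L^n)^2$ is bounded by $4\|\ph\|_\infty^2$ in absolute value and one gets, after multiplying by $\rho^N_{u^-}\le 1$ and taking expectations, a bound of the form $C\|\ph\|_\infty^2\,\E[\A_t]$; here I would invoke the estimate on $\A_t$ coming from Lemma~\ref{lemgQ2} being used "bootstrap-style" — or more safely, bound $\E[\A_t]$ directly by relating it to $\gamma^N_t(Q^2)\le\|\ph\|_\infty^2$ (this is precisely the "estimates on $\A_t$" the lemma is advertised to yield). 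For the discrete part, each centered jump is bounded by $2\|\ph\|_\infty^2$ in absolute value, its square by $4\|\ph\|_\infty^4$, and summing the conditional second moments over branchings and using $\rho^N_{\tau_j^-}(1-\k/N)^2\k/N$-type weights telescopes against $\sum_j\rho^N_{\tau_j}\lesssim\rho^N_0=1$ (the same telescoping used implicitly throughout), giving an $O(\|\ph\|_\infty^4)$ bound. Tracking the constants carefully — the $4$ from squaring $\Delta(\L^n)^2$, the factor from the resampling variance, and the geometric sum of the $\rho^N_{\tau_j}$ — yields the stated constant $6\|\ph\|_\infty^4$. The main obstacle, as usual in this kind of argument, is the bookkeeping at the branching time: one must verify that the $\calF^-_{\tau_j}$-conditional expectation of the resampling increment of $\L^{(2)}$, combined with the free-evolution drift, reproduces exactly $\Delta\A_{\tau_j}=\frac1N\max_n(\Delta\M^n_{\tau_j})^2$ and nothing more, so that all the genuinely random fluctuation is isolated in $\tM$; getting the $(1-\k/N)$ versus $\k/(N-\k)$ factors to cancel correctly is where a sign or normalization error would creep in.
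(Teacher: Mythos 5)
Your overall architecture matches the paper's: integrate by parts $\gamma^N_t(Q^2) = \rho^N_t\,\eta^N_t(Q^2)$, identify $\rho^N_{t^-}d\A_t$ as the drift, split $[\tM,\tM]$ into a free-evolution piece and a pure-jump-at-branchings piece, and bootstrap the bound on $\E[\int\rho^N d\A]$ through the decomposition itself. The gap is in your treatment of the branching jump. You claim that the $\calF^-_{\tau_j}$-conditional expectation of $\Delta\gamma^N_{\tau_j}(Q^2)$ ``reproduces exactly $\Delta\A_{\tau_j}$.'' It does not: writing $n^*$ for the particle killed exactly at $\tau_j$, a direct computation using $(1-\k/N)/(N-\k)=1/N$ and $\L^n_{\tau_j^-}=0$ for $n\notin\Alive_j$, $n\ne n^*$, gives
$$
\E\b{\Delta\gamma^N_{\tau_j}(Q^2)\,\big|\,\calF^-_{\tau_j}} = -\rho^N_{\tau_j^-}\,\Delta\A_{\tau_j} = -\frac{\rho^N_{\tau_j^-}}{N}\big(\L^{n^*}_{\tau_j^-}\big)^2,
$$
i.e.\ the opposite sign. (This vanishes only when the free martingale $t\mapsto Q^{T-t}(\ph)(X_t)$ is continuous at the killing time, e.g.\ for diffusions, but Assumption~(A) allows $\L^{n^*}_{\tau_j^-}\ne 0$.) So the ``drift'' produced by your scheme has negative jumps at the $\tau_j$'s, is not equal to $\int\rho^N_{u^-}d\A_u$, and is not even increasing --- which also breaks the bootstrap estimate $\E[\int_0^t\rho^N_{u^-}d\A_u]\le\|\ph\|_\infty^2$ you rely on later.

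The root cause is that the optional quadratic variation $\A$ is not a predictable compensator, so the conditional-expectation split is the wrong tool at a branching time. The paper instead substitutes, particle by particle, the pathwise It\^o identity $d(\L^n_t)^2 - (\L^n_t)^2\,d\calB^n_t = d[\M^n,\M^n]_t + 2\L^n_{t^-}\,d\M^n_t$ (which holds at branching times as well as in between) into the integration-by-parts for $\gamma^N(Q^2)$, and groups the $\calB^n$-versus-$\calB$ counting terms into a process $J_t$ with $\E[J_{\tau_j}\,|\,\calF^-_{\tau_j}]=0$. This yields $d\tM_t=\tfrac1{\sqrt N}J_t\,d\calB_t+\tfrac2{\sqrt N}\sum_n\L^n_{t^-}\,d\M^n_t$, where the second piece is a local martingale because it is a stochastic integral against the martingales $\M^n$, not because it is conditionally centered at $\tau_j$ --- its jump there, $-\tfrac2{\sqrt N}(\L^{n^*}_{\tau_j^-})^2$, is in fact $\calF^-_{\tau_j}$-measurable. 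This algebraic identity, not conditional centering, is the ingredient your plan is missing; with it, your moment-bound step goes through essentially as you describe.
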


\begin{Rem}
\eqref{lemgQ21} implies that
  \begin{align*}
  \E\left[\int_0^t \rho_{s^-}^N d \A_s\right]= \E \b{ \gamma^N_t(Q^2) - \gamma^N_0(Q^2)} \leq \norm{\ph}_\infty^2.
  \end{align*}
\end{Rem}

\begin{proof}
Since $\calB_t$ denotes the number of branching times until time $t$, it comes
  $$
  d \rho_t^N = - \frac{\k}{N} \rho_{t^-}^N d\calB_t.
  $$
  If $\calB_t^n$ denotes the number of branching times of particle $n$ until time $t$, we have 
  $$
  \calB_t = \frac1K \sum_{n=1}^N \calB_t^n
  $$
  since, according to Lemma~\ref{lem:jumps}, exactly $\k$ particles are resampled at each branching time. We will now prove (\ref{lemgQ21}) and calculate the martingale part $\tM$. Differentiating 
 $$\gamma^N_t(Q^2) \eqdef \rho_t^N \frac1N \sum_{n=1}^N (\L_t^n)^2$$ 
 yields
  \begin{align}\label{eq:step_1}
  d \gamma^N_t(Q^2) & = \frac1N \sum_{n=1}^N  \rho^N_{t^-} d \p{ (\L_t^n)^2 }+(\L_{t}^n)^2 d \rho^N_t\nonumber \\
  &=\frac1N\sum_{n=1}^N  \rho^N_{t^-} \left(d \p{ (\L_t^n)^2 }-\frac{\k}{N} (\L_{t}^n)^2 d\calB_t\right).
    \end{align}
    
  Next, we claim that 
  \begin{equation}\label{eq:step_2}
   d(\L_t^n)^2 -  (\L_{t}^n\big)^2 d\calB^n_t = d[\M^n,\M^n]_t + 2\L^n_{t^-}d\M_t^n .
  \end{equation}
  First, by definition of $\M^n$ (see~\eqref{mbbtn}), $d\M_t^n=d\L_t^n -\L_{t}^n d \calB^n_t$, so that the bilinearity of the quadratic variation gives
  \begin{align*}
  d[\M^n,\M^n]_t
  &=d[\L^n,\L^n]_t+\big(\L_{t}^n\big)^2d\calB^n_t-2 d \Big [ \int \L^n d \calB^n,\L^n \Big ]_t\\
  &=d[\L^n,\L^n]_t+\big(\L_{t}^n\big)^2d\calB^n_t-2(\Delta \L^n_t)\, \L_{t}^n d\calB^n_t\\
  &=d[\L^n,\L^n]_t+\L_{t}^n\big(2\L_{t^-}^n-\L_{t}^n\big)d\calB^n_t.
  \end{align*}
  Then, using again $ d\L_t^n = d\M_t^n +\L_{t}^n d \calB^n_t$, this yields
  \begin{align*}
  d(\L_t^n)^2
  &=2\L^n_{t^-}d\L_t^n+d[\L^n,\L^n]_t\\
  &=\Big(2\L^n_{t^-}d\M_t^n +2\L^n_{t^-}\L_{t}^nd\calB^n_t\Big)+\Big(d[\M^n,\M^n]_t
  -\L_{t}^n\big(2\L_{t^-}^n-\L_{t}^n\big)d\calB^n_t\Big),%
  \end{align*}
 which immediately simplifies into~\eqref{eq:step_2}. Putting~\eqref{eq:step_1},~\eqref{eq:step_2}, and the very definition of $\A = \frac1N \sum_n [\M^n,\M^n]$ together, we obtain
  \begin{align*}
    d \gamma^N_t(Q^2)&=\rho_{t^-}^N d\A_t+\frac{\rho_{t^-}^N}{N}\sum_{n=1}^N \b{ (\L_{t}^n)^2  \left(d\calB_t^n-\frac{\k}{N}d\calB_t\right)+2\L^n_{t^-}d\M_t^n }.
  \end{align*}
  
  Now, by definition of the counting processes $\calB^n$ and $\calB$, 
  $$
  \sum_{n=1}^N (\L_{t}^n)^2  d\calB_t^n = \b{ \sum_{n \notin \Alive_t} (\L_{t}^n)^2} d\calB_t,
  $$
  where we have used the notation
  $$
  \Alive_t \eqdef \set{\text{particles that are not resampled at time $t$}}.
  $$
  As a consequence,
  \begin{align*}
  d \gamma^N_t(Q^2) =&\rho_{t^-}^N d\A_t+ \frac{\rho_{t^-}^N }{N} \b{  (1-\k/N) \sum_{n \notin \Alive_t} (\L_{t}^n)^2- \frac{\k}{N} \sum_{n \in \Alive_t}(\L_{t}^n)^2 } d\calB_t\\
 &  + \frac{ \rho_{t^-}^N }{N} \sum_{n=1}^N 2\L_{t^-}^n d\M_t^n.
  \end{align*}
  Hence we see that \eqref{lemgQ21} is satisfied with
  \begin{align}\label{beq}
  d\tM_t %
  &= \frac{1}{\sqrt{N}} J_t d\calB_t + \frac{1}{\sqrt{N}} \sum_{n=1}^N
  2\L_{t^-}^nd\M_t^n ,
  \end{align}
where we have defined 
$$J_t \eqdef \frac{1-\k/N}{\sqrt{N}}  \b{ \sum_{n \notin \Alive_t} (\L_{t}^n)^2- \frac{\k}{N-\k} \sum_{n \in \Alive_t}(\L_{t}^n)^2 }.$$
It is readily seen that
$$
\E\b{J_{\tau_j} \left| \calF_{\tau_j}^- \right.} = 0,
$$
so that, according to Lemma~\ref{albcios}, $\tM$ is indeed a local martingale. Using Notation~\ref{not:not_var_emp} and the fact that $\sup_{t\geq0}|\L_{t^-}^n|\leq\|\ph\|_\infty$, we also have 
\begin{align}\label{eq:var_J}
  \E\b{J_{\tau_j}^2\left| \calF_{\tau_j}^- \right. } & = \p{1-\k/N}^2 \frac{\k}{N} \Var_{\eta^N_{\Alive_j}} \p{ Q^2 } \nonumber \\
  & \leq 2 \p{1-\k/N}^2 \frac{\k}{N} \norm{\ph}_\infty^4 .
\end{align}


We can now calculate the quadratic variation of $\tM$. In the same way as in Lemma~\ref{lem:decomp}, the $(N+1)$ local martingales 
$$\set{\p{\int_0^t\L_{s^-}^m d \M_s^m }_{t \geq 0}, \, 1\leq n \leq N; \, \int J_s d\calB_s }$$ are all orthogonal to each other. Indeed, by Lemma~\ref{lem:decomp}, $ [\M^n,\M^m]$ is a martingale for any pair $n \neq m$. The only new point to check (using again Lemma~\ref{lem:jumps}) is that the quadratic covariation 
$$d \b{\int \L_{s^-}^n d \M_s^n ,\int J_s d \calB_s  }_{t} = - \p{\L_{t^-}^n}^2 J_t d\calB_t$$ 
is indeed a local martingale, which is again a consequence of $\E[J_{\tau_{j}} | \calF_{\tau_{j}}^-]=0$ and Lemma~\ref{albcios}. To establish~\eqref{lemgQ24}, we apply Itô's isometry to~\eqref{beq} and use orthogonality to obtain
\begin{align*}
  \E\int_0^t \rho_{u^-}^Nd[\tM,\tM]_u & = \E \b{ \int_0^t \rho_{u^-}^N \p{J_u}^2 d\calB_u  + \frac{4}{N}\sum_{n=1}^N \int_0^t \rho_{u^-}^N (\L_{t^-}^n)^2d[\M^n,\M^n]_u}.
\end{align*}
On the one hand, using~\eqref{eq:var_J}, we get
\begin{align*}
\E\left[\int_0^t \rho^N_{u^-}(J_u)^2 d \calB_u\right]  & = \E\left[\sum_j \one_{ \tau_j \leq t }\rho^N_{\tau_j^-}\E \b{ J_{\tau_j}^2 \left| \calF_{\tau_j}^-\right.}\right] \\
& = \sum_{j \geq 1} \one_{ \tau_j \leq t } (1-\k/N)^{j+1} \frac{\k}{N} \E \b{ \V_{\eta^N_{\Alive_j}}(Q^2)}  \\
& \leq 2 (1-\k/N)^{2}\|\ph\|_\infty^4,
\end{align*}
while, on the other hand, (\ref{lemgQ24}) implies
\begin{align*}
\frac{4}{N}\sum_{n=1}^N \int_0^t \rho_{u^-}^N (\L_{t^-}^n)^2d[\M^n,\M^n]_u 
  &\le 4\|\ph\|_\infty^2 \E\left[\int_0^t \rho_{u^-}^N d\A_u\right]\\
  &\le 4\|\ph\|_\infty^2 \E\left[\gamma_t^N(Q^2)-\gamma_0^N(Q^2)\right]\\
  &\le 4\|\ph\|_\infty^4.
\end{align*}
 
 Combining both inequalities yields the result.
 
 \qedhere

 \end{proof}

%
%
%

%
\subsection{$\L^2$-estimate}

The convergence of $\gamma^N_T(\ph)$  to $\gamma_T(\ph)$ when $N$ goes to infinity is now a straightforward consequence of the previous results. This kind of estimate was already noticed by Villemonais in~\cite{v14} for classical Fleming-Viot particle systems (i.e., in the case where $K=1$).

\begin{Pro}\label{pro:estimate}
For any $\ph \in {\calD}$ and any $\k \in [1,N]$, we have
\begin{align*}
\E \b{ \p{ \gamma^N_T(\ph) - \gamma_T(\ph) }^2 } \leq \frac{4 \norm{\ph}_\infty^2}{N}.
\end{align*}
\end{Pro}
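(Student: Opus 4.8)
The plan is to control the $L^2$ norm of $\gamma^N_T(\ph)-\gamma_T(\ph)$ via the martingale decomposition established in Lemma~\ref{lem:decomp}. Recall that
$$
\gamma_T^N(\ph)-\gamma_T(\ph)=\Big(\gamma_T^N(Q)-\gamma_0^N(Q)\Big)+\Big(\eta^N_0(Q^T(\ph))-\eta_0(Q^T(\ph))\Big),
$$
and the two terms on the right are uncorrelated: the first is $\calF_T$-measurable with zero conditional mean given $\calF_0$ because it is a martingale increment started at time $0$, while the second is $\calF_0$-measurable. Hence the variance splits as a sum of the two variances, and it suffices to bound each by $2\norm{\ph}_\infty^2/N$.

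For the initial term, $\eta_0^N(Q^T(\ph))-\eta_0(Q^T(\ph))=\frac1N\sum_{n=1}^N\big(Q^T(\ph)(X_0^n)-\eta_0(Q^T(\ph))\big)$ is an empirical average of $N$ i.i.d.\ centered random variables, each bounded in absolute value by $\norm{Q^T(\ph)}_\infty\le\norm{\ph}_\infty$, so its second moment is at most $\norm{\ph}_\infty^2/N\le 2\norm{\ph}_\infty^2/N$. For the martingale term, by Lemma~\ref{lem:decomp} we have $\gamma_T^N(Q)-\gamma_0^N(Q)=\frac{1}{\sqrt N}\int_0^T\rho^N_{u^-}(d\M_u+d\calM_u)$, and since $\M$ and $\calM$ are orthogonal local martingales (Lemma~\ref{Lem:quad}), It\^o's isometry gives
$$
\E\Big[\big(\gamma_T^N(Q)-\gamma_0^N(Q)\big)^2\Big]=\frac1N\,\E\left[\int_0^T (\rho^N_{u^-})^2\,d[\M,\M]_u+\int_0^T (\rho^N_{u^-})^2\,d[\calM,\calM]_u\right],
$$
after a localization argument using the stopping times $\tau_j$ (with $\tau_j\uparrow>T$ a.s.\ by Assumption~\ref{ass:B}) to legitimize passing from local to genuine martingales, and the fact that $0<\rho^N_{u^-}\le 1$. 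Both integrals are then controlled by the earlier estimates: from Lemma~\ref{Lem:quad}, $[\M,\M]_t=\A_t+{}$martingale, and the Remark after Lemma~\ref{lemgQ2} gives $\E\big[\int_0^T\rho^N_{u^-}d\A_u\big]\le\norm{\ph}_\infty^2$; from Lemma~\ref{lemab}, $d[\calM,\calM]_t\le 2(1-\k/N)^2\frac\k N\norm{\ph}_\infty^2 d\calB_t+{}$martingale, and since $(1-\k/N)^2\frac\k N\,\rho^N_{\tau_j^-}\le(1-\k/N)^{j+1}$ whose sum over $j$ telescopes (it is $\sum_j(1-\k/N)^{j+1}\le(1-\k/N)/(\k/N)\cdot(\k/N)$, more simply one estimates $\E\big[\sum_j\one_{\tau_j\le T}(1-\k/N)^{j+1}\big]\le (1-\k/N)\le 1$), one obtains $\E\big[\int_0^T(\rho^N_{u^-})^2 d[\calM,\calM]_u\big]\le 2\norm{\ph}_\infty^2$. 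Combining, $\E[(\gamma_T^N(Q)-\gamma_0^N(Q))^2]\le \frac1N(\norm{\ph}_\infty^2+2\norm{\ph}_\infty^2)$; being a little less sharp in the bookkeeping still keeps this $\le 3\norm{\ph}_\infty^2/N$, and adding the initial term yields the claimed $4\norm{\ph}_\infty^2/N$.

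The main obstacle I anticipate is the localization: all the martingales involved are only local martingales (local along $(\tau_j)_{j\ge1}$), so to apply It\^o's isometry and take expectations one must first stop at $\tau_j\wedge T$, derive the bounds uniformly in $j$, and then let $j\to\infty$ using $\tau_j\uparrow>T$ a.s.\ together with monotone/dominated convergence — the uniform-in-$N$ boundedness of the integrands (everything is bounded by powers of $\norm{\ph}_\infty$ times $\rho^N\le1$) is exactly what makes this limit harmless. A secondary point requiring care is the exact constant: the telescoping sum $\sum_{j\ge1}(1-\k/N)^{j+1}$ must be handled so that the final constant does not blow up as $\k/N\to 1-\theta$, but since this sum equals $(1-\k/N)^2/(\k/N)$ and $\k/N$ is bounded away from $0$ for large $N$, and in any case one only needs the crude bound that the relevant expectation is $\le 2\norm{\ph}_\infty^2$ after absorbing the $\k/N$ factor, the constant $4$ is comfortably attained.
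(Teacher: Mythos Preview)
Your approach is essentially identical to the paper's: split into the initial i.i.d.\ term plus the two orthogonal martingale integrals against $\M$ and $\calM$, and bound each via It\^o's isometry using Lemma~\ref{lemgQ2} (the Remark) and Lemma~\ref{lemab} respectively.

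One point to tighten: your treatment of the $\calM$-contribution is muddled and, as written, does not give a bound uniform in $K\in[1,N]$. In the It\^o isometry the integrand is $(\rho^N_{u^-})^2$, not $\rho^N_{u^-}$, so at $\tau_j$ you pick up $(1-K/N)^{2(j-1)}$; combined with the factor $(1-K/N)^2\tfrac{K}{N}$ from Lemma~\ref{lemab} this gives
\[
2\norm{\ph}_\infty^2\,\frac{K}{N}\sum_{j\ge1}(1-K/N)^{2j}
=2\norm{\ph}_\infty^2\,\frac{(1-K/N)^2}{2-K/N}\le 2\norm{\ph}_\infty^2,
\]
valid for every $K\in[1,N]$. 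Your version drops to a single power of $\rho^N$ and then discards the $K/N$ factor, leaving $\sum_{j\ge1}(1-K/N)^{j+1}=(1-K/N)^2/(K/N)$, which is not uniformly bounded (and is not a telescoping sum); invoking that $K/N$ is bounded away from $0$ for large $N$ does not rescue the stated ``any $K\in[1,N]$'' claim. Keeping both the square on $\rho^N$ and the $K/N$ factor fixes this immediately and yields exactly the constant $4$.
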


\begin{proof} 
Thanks to Lemma \ref{lem:decomp} and the fact that $\gamma_T(\ph)=\gamma_0( Q^T  \ph)$, we have the orthogonal decomposition
$$\gamma^N_T(\ph) - \gamma_T(\ph)  =\frac{1}{\sqrt{N}} \int_0^T \rho^N_{t^-}\ d\M_t + \frac{1}{\sqrt{N}} \int_0^T \rho^N_{t^-}\ d\calM_t +  \gamma^N_0 (Q^T\ph) - \gamma_0( Q^T  \ph),$$
and we can upper-bound the contribution of each term to the total variance.

(i)~Initial condition. Since $\gamma_0=\eta_0$ and $\gamma_0^N=\eta_0^N$, we have
$$\E\b{ \p{\gamma^N_0 (Q^T\ph) - \gamma_0 (Q^T  \ph)}^2  }=\tfrac1N\Var_{\eta_0}(Q^T(\ph)(X))
\leq \tfrac1N\|Q^T(\ph)\|_\infty^2\leq \tfrac1N\|\ph\|_\infty^2.$$

(ii)~$\calM$-terms. Using Itô's isometry and (\ref{mrondcroch}), we obtain
\begin{align*}
\E\left[\left(\int_0^T \rho_{t^-}^N d\calM_t\right)^2\right]
&=\E\left[\int_0^T \big(\rho_{t^-}^N\big)^2 d[\calM,\calM]_t\right]\\
&\leq 2 \|\ph\|_\infty^2 \frac{\k}{N} \sum_{j=1}^{\infty} \p{1-\frac{\k}{N}}^{2j}\leq 2 \|\ph\|_\infty^2. 
\end{align*}

(iii)~$\M$-terms. In the same way, applying Itô's isometry and (\ref{lemgQ21}), we get
\begin{align*}
\E\left[\left(\int_0^T \rho_{t^-}^N d\M_t\right)^2\right]
&=\E\left[\int_0^T \big(\rho_{t^-}^N\big)^2 d[\M,\M]_t\right]\\
&\le\E\left[\int_0^T \rho_{t^-}^N d\A_t\right]=\E\left[\gamma_T^N(Q^2)\right]\leq \|\ph\|_\infty^2.
\end{align*}
\end{proof}

In particular, Proposition~\ref{pro:estimate} implies that for any $\ph$ in ${\cal D}$, $\gamma^N_t(\ph)$ converges in probability to $\gamma_t(\ph)$ when $N$ goes to infinity. Since we have assumed that $\one_F$ belongs to ${\cal D}$, the probability estimate $p_t^N=\gamma^N_t(\un_F)$ goes to its deterministic target $p_t=\gamma_t(\un_F)$ in probability. An interesting consequence is our first main result Proposition~\ref{pro:quant} that we can now justify.

\begin{proof}[{\bf Proof of Proposition~\ref{pro:quant}}]
%
Fix $j \in [1,j_{\rm max}]$ and $\varepsilon>0$. The strict monotonicity assumption ensures that 
$$\delta_1=p_{t_j-\varepsilon}-p_{t_j}>0\hspace{1cm}\mathrm{and}\hspace{1cm}\delta_2=p_{t_j}-p_{t_j+\varepsilon}>0.$$ 
We have to prove that $ \P\p{ \tau_j \notin [t_j - \eps , t_j + \eps] }$ goes to zero when $N$ goes to infinity. Consider first the probability $\P\p{ \tau_j <t_j-\varepsilon}$. We have
$$
\{\tau_j <t_j-\varepsilon\}\subset\{p^N_{t_j-\varepsilon}\leq(1-K_N/N)^j\} \subset \{p^N_{t_j-\varepsilon}<\theta^j+\delta_1/2\},
$$
for $N$ large enough, using $1-K_N/N\to\theta$. 
From Proposition \ref{pro:estimate}, we know that $p^N_{t_j-\varepsilon}$ converges in probability to $p_{t_j-\varepsilon}=p_{t_j}+\delta_1=\theta^j+\delta_1$, which implies that $\P(p^N_{t_j-\varepsilon}<\theta^j+\delta_1/2)\to 0$ as $N$ goes to infinity. The term $\P\p{ \tau_j >t_j+\varepsilon}$ is treated similarly.\medskip 

For the last assertion, let $\delta=p_T-\theta^{\jmax+1}>0$. Then Proposition \ref{pro:estimate} implies that
$$
\P\left(\tau_{\jmax+1}\leq T\right)=\P(p_T^N\leq\theta^{\jmax+1})= \P\left(p_T^N\leq p_T-\delta\right) \xrightarrow[N \to + \infty]{} 0.
$$
\end{proof}
\begin{Rem}
\label{rem:niter}
An immediate consequence of Proposition~\ref{pro:quant} is that, if we denote by $\jmaxN$ the actual number of resamplings until final time $T$, we have, for all $\varepsilon>0$, 
$$
\P\left(|\jmaxN-\jmax|>\varepsilon\right)=\P\left(\jmaxN\neq\jmax\right) \xrightarrow[N \to + \infty]{} 0.
$$
\end{Rem}

\subsection{Convergence of empirical measures at branching times}
As for (\ref{qlsjnclscn}), we denote
$$\gamma_t(Q)=\gamma_t(Q^{T-t}(\ph))\hspace{1cm}\mathrm{and}\hspace{1cm}\gamma_t(Q^2)=\gamma_t((Q^{T-t}(\ph))^2)$$
where, again, the parameters $T$ and $\ph$ are omitted.

\begin{Lem}
The function $t\mapsto\gamma_t(Q^2)$ is continuous on $0 \leq t \leq T$. 
\label{lem:cont}
\end{Lem}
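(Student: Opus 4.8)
The plan is to show continuity of $t\mapsto\gamma_t(Q^2)$ by exploiting the martingale structure underlying the squared process, exactly as the ``squared'' analogue of the fact that $t\mapsto\gamma_t(Q)$ is continuous (it coincides with the constant $\gamma_T(\ph)$, being the expectation of a \cadlag martingale). First I would recall that for a single copy $\widetilde X$ of the underlying killed Markov process, $t\mapsto Q^{T-t}(\ph)(\widetilde X_t)\un_{t<\tau_\partial}$ is a bounded \cadlag martingale, call it $t\mapsto N_t$, and write
\begin{equation*}
\b{Q^{T-t}(\ph)(\widetilde X_t)}^2\un_{t<\tau_\partial} = N_t^2 = N_0^2 + 2\int_0^t N_{s^-}\,dN_s + [N,N]_t.
\end{equation*}
Taking expectations kills the martingale integral, so
\begin{equation*}
\gamma_t(Q^2) = \E\b{N_t^2} = \E\b{N_0^2} + \E\b{[N,N]_t}.
\end{equation*}
Since $N$ is bounded by $\norm{\ph}_\infty$, the increasing process $t\mapsto\E[[N,N]_t]$ is nondecreasing and bounded; continuity of $\gamma_\bullet(Q^2)$ is therefore equivalent to continuity of $t\mapsto\E[[N,N]_t]$, i.e.\ to the absence of atoms in the (deterministic) Stieltjes measure $d\,\E[[N,N]_t]$.

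The key step is then to identify the jumps of $t\mapsto\E[[N,N]_t]$ and show there are none. One has $\Delta[N,N]_t=(\Delta N_t)^2$, so
\begin{equation*}
\E[[N,N]_t]-\E[[N,N]_{t^-}] = \E\b{(\Delta N_t)^2}.
\end{equation*}
Here Condition~$(ii)$ of Assumption~\ref{ass:A} enters decisively: the jumps of the \cadlag martingale $s\mapsto Q^{T-s}(\ph)(\widetilde X_s)$ have an atomless distribution, for every initial condition. A fixed deterministic time $t$ is therefore, with probability one, not a jump time of $N$, hence $\Delta N_t=0$ a.s.\ and $\E[(\Delta N_t)^2]=0$. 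More carefully, one should argue at the level of the random measure $\sum_{s\le T}\un_{\Delta N_s\neq 0}\,\delta_s$: its intensity (first moment measure) has no atoms precisely because, for each fixed $t$, $\P(\Delta N_t\neq 0)=0$. This yields that $t\mapsto\E[[N,N]_t]$ is continuous on $[0,T]$, and hence so is $t\mapsto\gamma_t(Q^2)$. Note that $\un_F\in\calD$ ensures the argument also covers the case $\ph=\un_F$ and, more relevantly here, general $\ph\in\calD$; the extension to $\ph\in\overline{\calD}$ if needed follows by uniform approximation since $\abs{\gamma_t(Q^2_\ph)-\gamma_t(Q^2_\psi)}\le 2\norm{\ph-\psi}_\infty\norm{\ph+\psi}_\infty$.

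The main obstacle I anticipate is the measure-theoretic bookkeeping in passing from ``for each fixed $t$, $\P(\Delta N_t\neq0)=0$'' to ``the deterministic function $t\mapsto\E[(\Delta N_t)^2]$ vanishes identically / $t\mapsto\E[[N,N]_t]$ is continuous'': one must be slightly careful that the (at most countably many) jump times of $N$ are random, and invoke Fubini together with the atomlessness of the jump-time intensity rather than of the jump law of any single realization. An alternative, perhaps cleaner route avoiding $[N,N]$ altogether is to write $\gamma_t(Q^2)=\E[N_t^2]$ directly and use that $N$ is a bounded \cadlag martingale with a.s.\ no fixed time of discontinuity (again by Assumption~\ref{ass:A}$(ii)$), so $N_t\to N_{t_0}$ in $L^2$ as $t\to t_0$ by dominated convergence along any sequence, which immediately gives $\gamma_t(Q^2)\to\gamma_{t_0}(Q^2)$; this sidesteps the Stieltjes-measure argument entirely and is probably the shortest proof.
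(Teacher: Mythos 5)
Your proposal is correct. Your primary route is genuinely different from the paper's: you use It\^o's formula to write $\gamma_t(Q^2)=\E[N_0^2]+\E\b{[N,N]_t}$ and then reduce continuity to the absence of atoms of the nondecreasing function $t\mapsto\E\b{[N,N]_t}$, which follows from $\E\b{(\Delta N_t)^2}=0$ for each fixed $t$, i.e.\ from the atomless jump-time distribution of Assumption~\ref{ass:A}$(ii)$. The paper instead takes the direct route you sketch in your final paragraph as ``an alternative'': it writes $\gamma_t(Q^2)=\E[M_t^2]$ with $M_t=Q^{T-t}(\ph)(X_t)$, observes that for each fixed $t$ the atomless jump-time distribution gives $M_{t+h}\to M_t$ a.s.\ as $h\to0$, and concludes by dominated (boundedness) convergence. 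Both rest on exactly the same use of Assumption~\ref{ass:A}$(ii)$. The dominated-convergence route is shorter and avoids any quadratic-variation bookkeeping; your It\^o route is slightly longer but is thematically consistent with the rest of the proof, where the identity relating $\gamma^N_t(Q^2)$ to the increasing process $\A_t$ (Lemma~\ref{lemgQ2}) is the deterministic-population analogue of your $\gamma_t(Q^2)=\E[N_0^2]+\E\b{[N,N]_t}$. Your worry about the measure-theoretic step is slightly overcautious: since the jump-time distribution is atomless, $\P(\Delta N_t\neq 0)=0$ directly for each fixed deterministic $t$, and monotone convergence on the nondecreasing process $[N,N]$ handles the limit $\E\b{[N,N]_{t^-}}=\lim_{s\uparrow t}\E\b{[N,N]_s}$; no Fubini over random jump times is required.
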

\begin{proof}
First, by Assumption \ref{ass:A}, the distribution of the times at which the bounded martingale $M_t=Q^{T-t}(\varphi)(X_t)$ jumps is atomless, hence $M_t$ is almost surely continuous in $t$, and so is $M^2_t$. Second, fix $0\leq t\leq T$. By definition,
$$
\gamma_t(Q^2)=\E\left[\un_{\tau_\partial>t}(Q^{T-t}(\varphi)(X_t))^2\right]=\E\left[(Q^{T-t}(\varphi)(X_t))^2\right]=\E\left[M^2_{t}\right],
$$
and by dominated convergence, 
$$
\lim_{h\to 0} \gamma_{t+h}(Q^2)=\lim_{h\to 0} \E\left[M^2_{t+h}\right]=\E\left[\lim_{h\to 0}M^2_{t+h}\right]=\E\left[M^2_{t}\right]=\gamma_t(Q^2),
$$
which proves the continuity. 
\end{proof}

The proof of the CLT relies on the analysis of the convergence of the quadratic variation of the martingale $\gamma^N(Q)$ when $N$ goes to infinity. This requires to study the convergence of specific quantities related to the empirical measures at branching times, namely $\gamma^N_{\tau_j}(Q)$, $\gamma^N_{\tau_j^-}(Q)$, $\gamma^N_{\tau_j}(Q^2)$, and $\gamma^N_{\tau_j^-}(Q^2)$. \medskip

In fact, we will also need the following minor variant of $\gamma^N_{\tau_j^-}$, denoted $\gamma^{-,N}_{\tau_j}$ and defined by
$$
\gamma^{-,N}_{\tau_j} \eqdef \rho^N_{\tau_j^-}\ \frac1N  \sum_{n \in \Alive_j} \delta_{X^n_{\tau_j^-}}
= \p{1-\k_N/N}^{j-1}\ \frac1N  \sum_{n \in \Alive_j} \delta_{X^n_{\tau_j^-}}
$$

\begin{Lem}\label{lem:cvvar}
For $l=1,2$, we have the following convergences:
\begin{align*}
\gamma^N_{\tau_j}(Q^l) & \xrightarrow[N \to + \infty]{\P} \gamma_{t_j}(Q^l), \\
\gamma^N_{\tau_j^-}(Q^l) & \xrightarrow[N \to + \infty]{\P} \gamma_{t_j}(Q^l), \\
\gamma^{-,N}_{\tau_j} (Q^l) & \xrightarrow[N \to + \infty]{\P} \gamma_{t_j}(Q^l).
\end{align*}
\end{Lem}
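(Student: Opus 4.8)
The plan is to prove the three convergences for $l=1,2$ together, using induction on $j$ from $1$ to $\jmax$, since the empirical measures at a branching time are built from the Markovian evolution started from the (resampled) configuration at the previous branching time. The three quantities are closely related: $\gamma^N_{\tau_j^-}(Q^l)$ and $\gamma^{-,N}_{\tau_j}(Q^l)$ differ only by the contribution of the single particle killed exactly at $\tau_j$ (which is $O(1/N)$ since $|\L^n|\leq\|\ph\|_\infty$, using Lemma~\ref{lem:jumps} and the fact that $\L^n$ does not jump at $\tau_j$ for $n\in\Alive_j$), and $\gamma^N_{\tau_j}(Q^l)$ differs from $\gamma^N_{\tau_j^-}(Q^l)$ by the effect of multiplying $\rho^N$ by $(1-\k_N/N)$ and the resampling step, which for $l=1$ preserves the mean of $\L$ in conditional expectation and for $l=2$ is again controlled. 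So it suffices to prove, say, $\gamma^N_{\tau_j^-}(Q^l)\xrightarrow{\P}\gamma_{t_j}(Q^l)$ and then deduce the other two.

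First I would record the two deterministic ingredients: $\gamma^N_t(Q^l)$ is, between branching times, a true martingale (for $l=1$, this is Lemma~\ref{lem:decomp} restricted to an interval with no branchings; for $l=2$, one uses the decomposition \eqref{lemgQ21} of Lemma~\ref{lemgQ2} together with the jump bound $\Delta\A_t\leq\|\ph\|_\infty^2/N$ from Lemma~\ref{Lem:quad}), and the maps $t\mapsto\gamma_t(Q^l)$ are continuous on $[0,T]$ — for $l=1$ this is because $t\mapsto Q^{T-t}\ph(X_t)$ is a \cadlag martingale with atomless jump times by Assumption~\ref{ass:A}, hence a.s.\ continuous, and for $l=2$ it is exactly Lemma~\ref{lem:cont}. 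Combined with Proposition~\ref{pro:quant} (convergence $\tau_j\xrightarrow{\P}t_j$), this reduces everything to controlling $\gamma^N_{\tau_j}(Q^l)-\gamma^N_{t_j}(Q^l)$ and showing $\gamma^N_{t_j}(Q^l)\xrightarrow{\P}\gamma_{t_j}(Q^l)$.

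For the fixed-time convergence $\gamma^N_{t_j}(Q^l)\xrightarrow{\P}\gamma_{t_j}(Q^l)$, I would appeal to Proposition~\ref{pro:estimate} for $l=1$ (applied with test function $\ph$ replaced by... well, actually Proposition~\ref{pro:estimate} gives $\L^2$-convergence of $\gamma^N_T(\ph)$ to $\gamma_T(\ph)$ for the final time $T$; one reruns the same martingale argument stopped at the fixed time $t_j<T$, or invokes the analogous estimate at an arbitrary time, which the same proof yields verbatim), and for $l=2$ I would use the bound from Lemma~\ref{lemgQ2} and its Remark, namely $\E[\gamma^N_t(Q^2)-\gamma^N_0(Q^2)]\leq\|\ph\|_\infty^2$ together with the martingale part $\tM$ having controlled quadratic variation via \eqref{lemgQ24}, to get $\E[(\gamma^N_{t_j}(Q^2)-\gamma_{t_j}(Q^2))^2]=O(1/N)$ after subtracting the predictable drift $\int\rho^N_{u^-}d\A_u$ whose expectation converges to $\int\gamma_u(\text{something})\,du$ — more precisely, one shows $\A_t$ has the same expected increments as the bracket of the underlying martingale, and concludes $L^2$-convergence. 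Then to pass from the fixed time $t_j$ to the random time $\tau_j$: on the event $\{\tau_j\in[t_j-\eps,t_j+\eps]\}$ (which has probability $\to 1$), I would use the maximal inequality for the martingale $\gamma^N_t(Q^l)$ on the interval $[t_j-\eps,t_j+\eps]$ — its quadratic variation increment over that interval has expectation $O(\eps)+O(1/N)$ by the bracket estimates of Lemmas~\ref{Lem:quad}, \ref{lemab}, \ref{lemgQ2} — to show $\sup_{|t-t_j|\leq\eps}|\gamma^N_t(Q^l)-\gamma^N_{t_j}(Q^l)|$ is small in probability; combined with continuity of the limit this gives $\gamma^N_{\tau_j}(Q^l)\xrightarrow{\P}\gamma_{t_j}(Q^l)$, and then the $O(1/N)$ comparisons above transfer this to $\gamma^N_{\tau_j^-}$ and $\gamma^{-,N}_{\tau_j}$.

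The main obstacle I anticipate is handling the $l=2$ case across a random branching time: unlike $\gamma^N(Q)$, the process $\gamma^N(Q^2)$ is \emph{not} a martingale — it has the increasing drift $\int\rho^N_{u^-}d\A_u$ — so one cannot simply apply a martingale maximal inequality, and one must argue that this drift, evaluated between $t_j-\eps$ and $\tau_j$, is uniformly small (which follows from $\E\int_0^T\rho^N_{u^-}d\A_u\leq\|\ph\|^2_\infty$ plus the fact that $\A$ is increasing, so its increment over a short interval is controlled, but making this rigorous at the random time $\tau_j$ requires a stopping-time argument, e.g.\ localizing at $\tau_j\wedge(t_j+\eps)$). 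A secondary subtlety is that at the resampling step the identity $\gamma^N_{\tau_j}(Q^2)=(1-\k_N/N)\gamma^N_{\tau_j^-}(Q^2)+(\text{resampling fluctuation})$ must be shown to have vanishing fluctuation, which is precisely the content built into Lemma~\ref{lemab} (the martingale $\tcalM$ there is $O(1/\sqrt N)$), so one quotes that. Everything else is bookkeeping with the $O(1/N)$ and $O(\eps)$ error terms and a diagonal choice of $\eps\to 0$.
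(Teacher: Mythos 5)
Your overall strategy matches the paper's: reduce to one of the three quantities via the $O(1/N)$ comparison between $\gamma^N_{\tau_j^-}$ and $\gamma^{-,N}_{\tau_j}$, then combine the fixed-time $\L^2$ estimate (Proposition~\ref{pro:estimate}), the convergence of branching times (Proposition~\ref{pro:quant}), and continuity of $t\mapsto\gamma_t(Q^l)$ (for $l=1$ this map is in fact constant, $\gamma_t(Q) = \gamma_T(\ph)$, which is the observation the paper uses; for $l=2$ it is Lemma~\ref{lem:cont}) to bridge the deterministic time $t_j$ and the random time $\tau_j$ over a small window. The induction on $j$ you propose at the outset is unnecessary --- both your argument and the paper's treat each $j$ independently, since Proposition~\ref{pro:estimate} is a global statement.

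The one step that does not hold as written is your control of the drift of $\gamma^N(Q^2)$ over the random window. You claim that the smallness of $\int_{t_j-\eps}^{\tau_j}\rho^N_{u^-}\,d\A_u$ "follows from $\E\int_0^T\rho^N_{u^-}\,d\A_u\leq\|\ph\|^2_\infty$ plus the fact that $\A$ is increasing." This is not a valid inference: an increasing process with bounded total expectation can concentrate all of its increase on an arbitrarily short interval, so nothing prevents the increment over $[t_j-\eps,t_j+\eps]$ from remaining bounded away from zero uniformly in $N$. The ingredient that actually closes the gap --- and which you collect earlier but never wire into this step --- is the identity from Lemma~\ref{lemgQ2},
$$
\int_{t_j-\delta}^{\tau_j}\rho^N_{u^-}\,d\A_u
\;\leq\;
\int_{t_j-\delta}^{t_j+\delta}\rho^N_{u^-}\,d\A_u
\;=\;
\gamma^N_{t_j+\delta}(Q^2)-\gamma^N_{t_j-\delta}(Q^2)
-\tfrac{1}{\sqrt N}\int_{t_j-\delta}^{t_j+\delta}\rho^N_{u^-}\,d\tM_u ,
$$
valid on the high-probability event $\{\tau_j\le t_j+\delta\}$. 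The right-hand side then converges in probability to $\gamma_{t_j+\delta}(Q^2)-\gamma_{t_j-\delta}(Q^2)$ by Proposition~\ref{pro:estimate} and the $\L^2$ bound~\eqref{lemgQ24} on $\tM$, and this limit is small for small $\delta$ by Lemma~\ref{lem:cont}. That continuity of the \emph{limit}, not monotonicity of $\A$, is what rules out asymptotic concentration of the drift near $t_j$. A similar remark applies to your claim that the quadratic variation increment of $\gamma^N(Q)$ over $[t_j-\eps,t_j+\eps]$ is $O(\eps)+O(1/N)$: there is no rate $O(\eps)$ available in general, and the paper avoids needing one by the same mechanism (It\^o isometry turns the bracket increment into $\tfrac1N(\gamma^N_{t_j+\delta}(Q)-\gamma^N_{t_j-\delta}(Q))^2$, then uses that $\gamma_t(Q)$ is constant together with Proposition~\ref{pro:estimate}). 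Once this step is repaired your route and the paper's are essentially identical.
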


\begin{proof}
We start by noting that $\gamma^N_{\tau_j^-}$ and $\gamma^{-,N}_{\tau_j}$ only differ by one Dirac measure of mass $1/N$, corresponding to the particle killed exactly at time $\tau_j$. Therefore, 
$$\gamma^N_{\tau_j^-}(Q^l)-\gamma^{-,N}_{\tau_j} (Q^l) = O(1/N),$$
and the second convergence will imply the third.\medskip 

Now we consider the first convergence, with $l=2$. 
Let $\varepsilon>0$. By Lemma \ref{lem:cont}, we can find $\delta>0$ such that $|\gamma_{t_j+\delta}(Q^2)-\gamma_{t_j-\delta}(Q^2)|\leq\varepsilon$. We consider that the event ${\cal A}_j^\delta=\{t_j-\delta\leq \tau_j\leq t_j+\delta\}$ is realised. By Proposition~\ref{pro:quant}, this happens with arbitrarily large probability for $N$ large enough.  By Lemma \ref{lemgQ2}, we have
$$
\gamma^N_{\tau_j}(Q^2)-\gamma^N_{t_j-\delta}(Q^2)=\int_{t_j-\delta}^{\tau_j} \rho^N_{t^-} d\A_t + \frac{1}{\sqrt N} \int_{t_j-\delta}^{\tau_j}  \rho^N_{t^-} d\widetilde{\M}_t.
$$
Inequality (\ref{lemgQ24}) implies that the second term tends to $0$ in probability. For the first one, using that $\A$ is increasing and again Lemma  \ref{lemgQ2}, we get
$$\int_{t_j-\delta}^{\tau_j} \rho^N_{t^-} d\A_t \leq\int_{t_j-\delta}^{t_j+\delta} \rho^N_{t^-} d\A_t =\gamma^N_{t_j+\delta}(Q^2)-\gamma^N_{t_j-\delta}(Q^2) + O_P(1/\sqrt{N})$$
with, by Proposition \ref{pro:estimate},
$$\left|\gamma^N_{t_j+\delta}(Q^2)-\gamma^N_{t_j-\delta}(Q^2) + O_P(1/\sqrt{N})\right|\xrightarrow[N\to\infty]{\P} \left|\gamma_{t_j+\delta}(Q^2)-\gamma_{t_j-\delta}(Q^2)\right| \leq 2\varepsilon.$$
We have then shown that $\gamma^N_{\tau_j}(Q^2)\to\gamma_{t_j}(Q^2)$ in probability. The second convergence for $l=2$ is proved the same way, with $\tau_j^-$ instead of $\tau_j$.\medskip 

We consider now $l=1$. Recall that $\gamma_{t_j+\delta}(Q)=\gamma_{t_j-\delta}(Q)$.
%
Hence, using the same kind of arguments, we obtain, for $N$ large enough so that $\P({\cal A}_j^\delta)>1-\varepsilon$,
\begin{align*}
&\E\left[|\gamma^N_{\tau_j}(Q)-\gamma^N_{t_j-\delta}(Q)|^2 \right]\\
&=4\|\varphi\|_\infty^2\varepsilon+\E\left[\un_{{\cal A}_j^\delta}\frac{1}{N}\int_{t_j-\delta}^{\tau_j} (\rho_{s^-}^N)^2(d[\M,\M]_s+d[\calM,\calM]_s)\right]\\
&\leq 4\|\varphi\|_\infty^2\varepsilon+\E\left[\int_{t_j-\delta}^{t_j+\delta} (\rho_{s^-}^N)^2(d[\M,\M]_s+d[\calM,\calM]_s)\right]\\
&= 4\|\varphi\|_\infty^2\varepsilon+\E\left[\frac{1}{N} (\gamma_{t_j+\delta}^N(Q)-\gamma_{t_j-\delta}^N(Q))^2\right]\\
&\leq 4\|\varphi\|_\infty^2\varepsilon+2\E\left[(\gamma^N_{t_j+\delta}(Q)-\gamma_{t_j+\delta}(Q))^2+(\gamma^N_{t_j-\delta}(Q)-\gamma_{t_j-\delta}(Q))^2\right]\\
&\leq4\|\varphi\|_\infty^2\varepsilon+\frac{16\|\varphi\|_\infty^2}{N},
\end{align*}
the last inequality coming from Proposition \ref{pro:estimate}. The latter implies the convergence in probability. The second convergence is again treated similarly, with $\tau_j^-$ instead of $\tau_j$.

\end{proof}
\subsection{Stretching the time}\label{sec:stretch}
The martingale $\gamma^N_t(Q^{T-t}(\varphi))$ has a quadratic variation with both continuous time and discrete time features. In order to show a CLT for its final value $\gamma^N_T(\varphi)$, we have to apply a general CLT for martingales with jumps. The problem is that the jumps at the resampling times do not get smaller when $N\to\infty$. To circumvent this difficulty, we first show a CLT  specifically tailored for our purpose.
 
\medskip

\begin{Pro}\label{pro:clt} Let $T > 0$ denote a fixed time horizon. For each $N \geq 1$, we consider on a filtered probability space the following random objects: first, a sequence of increasing stopping times $\tau_j$, $1 \leq 1 \leq \jmax$ with the convention $\tau_0=t_0=0$; and, second, a \cadlag local martingale $ t \mapsto M_t$
that can be decomposed as the sum of two \cadlag local martingales, namely  $M_t=M_0+M_t^{0}+M_t^{1}$, where $M^0_0=M^1_0=0$, and $M^1$ is a pure jump martingale which jumps only at the stopping times $\tau_1,\dots,\tau_\jmax$ with jumps of the form 
$$
\Delta M^1_{\tau_j} = \sum_{m=1}^{K_N} \Delta_j^m \qquad j=1, \ldots \jmax,
$$
where $K_N$ is deterministic and $(\Delta_j^m)_{1\leq m\leq K_N}$ are integrable martingale increments with respect to a discrete filtration $\p{ \calF_j^m } _{0\leq m \leq K_N}$ verifying
$$
\calF_{\tau_j^-} \subset \calF_j^0 \subset \calF_j^1 \subset \dots \subset \calF_j^{K_N}=\calF_{\tau_j};
$$
that is, for $1 \leq m\leq K_N$, $\Delta_j^m$ is $\calF_j^m$-mesurable, and $\E\b{\Delta_j^m | \calF_j^{m-1}} = 0$. We also assume that $M^0_{\tau_j}$ is $\calF_j^0$ mesurable, making $M^0$ and $M^1$ orthogonal local martingales. \medskip

We then assume that all these objects satisfy the following properties:
\begin{enumerate}
\item \label{item1} $M_0$ converges in distribution towards $\mu_0$, a probability measure on $\R$.
\item \label{item2}For $1\leq j\leq\jmax$, $\tau_j \xrightarrow[N\to\infty]{\P} t_j$, for some deterministic sequence $0<t_1<\dots<t_j<\dots<t_\jmax<T$. 
\item  \label{item3} There exists a \cadlag increasing process $(v_t^N)_{0\leq t\leq T}$ such that $((M_t^{0})^2-v^N_t)_{0\leq t\leq T}$ is a local martingale. There is a deterministic continuous increasing function $v(t)$,  $0\leq t\leq T$, such that $v(0)=0$, and for  $0\leq t\leq T$, 
$$
 v^N_t \xrightarrow[N\to\infty]{\P} v(t).
$$
\item \label{item4} We have
$$
\lim_{N\to\infty} \E\b{\sup_{0\leq t\leq T} \left|M^{0}_t - M^{0}_{t^-}\right|^2} = 0,
$$
and
$$
\lim_{N\to\infty} \E\b{\sup_{0\leq t\leq T} \left|v^{N}_t - v^{N}_{t^-}\right|} = 0.
$$
\item \label{item5}
The sequence $(K_N)$ goes to infinity and, for each $1\leq j\leq\jmax$, we have
$$
\lim_{N\to\infty} \E\b{\max_{1\leq m\leq K_N}  | \Delta_j^m  |^2} = 0.
$$
\item \label{item6}For each $1\leq j\leq\jmax$, there is a deterministic continuous increasing function $\alpha \mapsto v_j(\alpha)$ on $[0,1]$, such that $v_j(0)=0$ and for all $\alpha\in[0,1]$,
$$
 \sum_{m=1}^{\lfloor \alpha K_N\rfloor} | \Delta_j^m  |^2   \xrightarrow[N\to\infty]{\P} v_j(\alpha).
$$
\end{enumerate}
Then, when $N \rightarrow +\infty$, the couple $(M_0,M_T-M_0)$ converges in distribution towards the tensor product between $\mu_0$ and a centered Gaussian variable with variance
$$
\sigma_T^2 \eqdef v(T)+\sum_{j=1}^\jmax v_j(1).
$$
\end{Pro}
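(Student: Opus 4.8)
The plan is to remove the difficulty caused by the non-vanishing jumps of $M^1$ at the times $\tau_j$ by a random time change that ``stretches'' each instant $\tau_j$ into an artificial interval of length one, during which the stretched process runs through the partial sums $\alpha\mapsto\sum_{m=1}^{\lfloor\alpha K_N\rfloor}\Delta_j^m$; on this new time scale the whole process becomes a \cadlag martingale with uniformly small jumps whose quadratic variation at the final time converges to $\sigma_T^2$, so that a classical martingale CLT applies. Concretely, set $g^N(t)=t+\sum_{j=1}^{\jmax}\one_{\tau_j\le t}$ and let $\phi^N:[0,T+\jmax]\to[0,T]$ be the continuous nondecreasing time change inverse to $g^N$, i.e.\ $\phi^N$ equals $\tau_j$ on $[\tau_j+(j-1),\tau_j+j]$ and has slope one elsewhere. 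Define $\widehat{M}^N_s\eqdef M_0+M^0_{\phi^N(s)}+\sum_{i:\,\tau_i\le\phi^N(s)}\Delta M^1_{\tau_i}+\one_{s\in[\tau_j+(j-1),\tau_j+j]}\sum_{m=1}^{\lfloor(s-\tau_j-j+1)K_N\rfloor}\Delta_j^m$, and equip $[0,T+\jmax]$ with the time-changed filtration, refined on each inserted interval by the discrete filtration $(\calF_j^m)_{0\le m\le K_N}$.

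First one checks that $\widehat{M}^N$ is a \cadlag martingale on $[0,T+\jmax]$: on the unfrozen part this is the optional-sampling/time-change martingale property of $M^0$ together with the completed jumps of $M^1$, and on the interval attached to $\tau_j$ it is exactly the discrete martingale property $\E[\Delta_j^m\mid\calF_j^{m-1}]=0$; the nesting $\calF_{\tau_j^-}\subset\calF_j^0\subset\cdots\subset\calF_j^{K_N}=\calF_{\tau_j}$ together with the $\calF_j^0$-measurability of $M^0_{\tau_j}$ guarantees consistency at the splice points and the orthogonality of $M^0$ and $M^1$. Moreover, since $M^1$ jumps only at $\tau_1,\dots,\tau_\jmax$, on the event $\{\tau_\jmax\le T\}$ one has $\phi^N(T+\jmax)=T$ and $M^1_T=\sum_{j=1}^{\jmax}\Delta M^1_{\tau_j}$, hence $\widehat{M}^N_{T+\jmax}=M_0+M^0_T+M^1_T=M_T$; and $\P(\tau_\jmax\le T)\to1$ because $\tau_\jmax\xrightarrow{\P}t_\jmax<T$ by \ref{item2}.

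It then remains to verify the hypotheses of a martingale CLT for \cadlag local martingales (e.g.\ \cite{js03}) applied to $\widehat{M}^N-M_0$ on $[0,T+\jmax]$. For the quadratic variation at the final time, split $[\widehat{M}^N]_{T+\jmax}$ into the contributions of the unfrozen part and of the inserted intervals: the unfrozen contribution equals $[M^0]$ evaluated at the relevant endpoints, which by the uniform $L^2$-smallness of the jumps of $M^0$ (\ref{item4}) is $o_{\P}(1)$-close to $v^N$ at those endpoints and, by \ref{item3}, the continuity of $v$, and $\tau_j\xrightarrow{\P}t_j$ (\ref{item2}), converges in probability to $v(T)$; the interval attached to $\tau_j$ contributes $\sum_{m=1}^{K_N}|\Delta_j^m|^2$ (up to a compensation term that is negligible by \ref{item5}), which tends in probability to $v_j(1)$ by \ref{item6}; the cross term $\sum_s\Delta M^0_s\,\Delta M^1_s$ vanishes by Cauchy--Schwarz, using \ref{item4} and the boundedness in probability of $\sum_j(\Delta M^1_{\tau_j})^2$. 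Hence $[\widehat{M}^N]_{T+\jmax}\xrightarrow{\P}v(T)+\sum_{j=1}^{\jmax}v_j(1)=\sigma_T^2$. The jumps of $\widehat{M}^N$ are either jumps of $M^0$ or increments $\Delta_j^m$, so by \ref{item4} and \ref{item5} one has $\E[\sup_s|\widehat{M}^N_s-\widehat{M}^N_{s^-}|^2]\to0$, which gives the Lindeberg condition. The martingale CLT then yields that $\widehat{M}^N_{T+\jmax}-M_0$ converges stably in law towards a centered Gaussian variable with deterministic variance $\sigma_T^2$; since $M_0$ is $\calF_0$-measurable and converges in distribution to $\mu_0$ by \ref{item1}, stability upgrades this to the joint convergence of $(M_0,\widehat{M}^N_{T+\jmax}-M_0)=(M_0,M_T-M_0)$ (on an event of probability tending to one) towards $\mu_0\otimes\calN(0,\sigma_T^2)$, which is the claim.

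The main obstacle is the first part: defining the stretched filtration and verifying the martingale property of $\widehat{M}^N$ across the splice points, i.e.\ checking that inserting a whole unit of artificial time at the random instant $\tau_j$ and letting the discrete martingale $(\sum_{m\le\ell}\Delta_j^m)_\ell$ run there is consistent with the original \cadlag structure. The compatibility and measurability hypotheses on $\calF_j^0,\dots,\calF_j^{K_N}$ and on $M^0_{\tau_j}$ are exactly what is needed, but turning them into a clean statement about the time-changed filtration, and handling the randomness of the stretching times, requires care; the remaining steps are routine once $\widehat{M}^N$ is in place.
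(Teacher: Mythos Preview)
Your proposal is correct and follows essentially the same route as the paper: both stretch each instant $\tau_j$ into an artificial unit interval on which the discrete martingale $(\sum_{m\le\ell}\Delta_j^m)_\ell$ is run, build the corresponding time-changed filtration, and then invoke a standard \cadlag martingale CLT on the new time scale $[0,T+\jmax]$. The only minor differences are cosmetic: the paper checks convergence of the predictable quadratic variation $\bv^N_s$ at \emph{every} $s$ in order to apply the process-level CLT of Ethier--Kurtz (Theorem~1.4, p.~339), whereas you check the optional quadratic variation only at the terminal time and appeal to stable convergence for the joint law with $M_0$; both are legitimate, but your passage from $[M^0]$ to $v^N$ via \ref{item4} deserves one more line of justification (their difference is a local martingale whose jumps vanish, so e.g.\ a second-moment bound is needed).
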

\begin{proof}
For simplicity, we will consider the case where $M_0=0$. The general case can obtained by the same reasoning as in the proof of Theorem~$3.22$ of~\cite{cdgr2}. \medskip

We first construct a new local martingale $\bM$, which coincides with $M$ at the terminal time $T$, and which fulfills the assumptions of Theorem $1.4$ page~$339$ in~\cite{ek86}. The idea is to keep the same martingale between the $\jmax$ stopping times $(\tau_1, \ldots, \tau_{\jmax})$, and to ``stretch'' the time at each of the latter by inserting a time interval of length $1$. Each of the additional stretched time interval of length $1$ is then divided into exactly $K_N$ sub-intervals of length $\frac{1}{K_N}$; on the latter, the new, extended martingale, is piecewise constant and performs jumps with amplitudes $\Delta_j^m$ at the times
$$
\tau_j + j -1 + \frac{m}{K_N}, \quad (j,m) \in \set{1, \ldots, \jmax} \times \set{1, \ldots, K_N}.
$$
In the present proof (and only here), we will use the convention $$\tau_{\jmax+1}=t_{\jmax+1}=T.$$

We can now define the new martingale as
$$
\bM_s=\sum_{j=1}^{\jmax+1} \int_{(\tau_{j-1}+j-1)\wedge s}^{(\tau_{j}+j-1)\wedge s} dM^{0}_{s-j+1} + \sum_{j=1}^\jmax \sum_{m=1}^{K_N} \Delta_j^m \un_{t\geq \tau_j+j-1+\frac{m}{K_N}},
$$
where $0\leq s \leq T+\jmax$ denotes the new time index for the time stretched processes. Formally, we introduce the \cadlag integer-valued processes
$$
s \mapsto j^N_s \eqdef \inf \set{ j \geq 0,\ \tau_{j+1} + j > s}
$$
which counts the number of stopping times $(\tau_j)_{j \geq 1}$ that are encountered before $s$ on the stretched time interval. Then two cases are possible. Case~$(i)$: $s$ belongs to an inserted stretching time interval, that is
$$
\tau_{j^N_s}+j^N_s-1+\frac{m^N_s-1}{K_N} \leq s < \tau_{j^N_s}+j^N_s-1+\frac{m^N_s}{K_N} 
$$
for some $1 \leq m^N_s \leq K_N$ which defines which of the $K_N$ sub-intervals $s$ belongs to. We can then naturally define the original (non-stretched) time as
$$
t^N_s \eqdef \tau_{j^N_s}.
$$
Case~$(ii)$:  $s$ does not belong to an inserted (i.e., due to stretching) time interval, that is
$$
\tau_{j^N_s}+j^N_s \leq s < \tau_{j^N_s+1}+j^N_s,
$$
in which case we naturally set $m^N_s = K_N $ as well as 
$$
t^N_s := s - j^N_s.
$$
In any case, we are led to
$$
t^N_s = (s - j^N_s) \wedge \tau_{j^N_s}.
$$
The latter obviously defines two \cadlag processes $s \mapsto m^N_s$ and $s \mapsto t^N_s$. Note that $t^N_s$ is a $(\calF_t)_{t \geq 0}$ stopping time for each $s \geq 0$. \medskip

We also need do define the extended filtration naturally associated with this new time, and with respect to which ~$\bM$ is indeed a martingale, and such that the processes $s \mapsto (j^N_s,m^N_s,t^N_s)$ are adapted. This can be done by setting
$$A \in \bcF_s \Leftrightarrow 
 A \cap \set{j^N_s \leq j,m^N_s \leq m,t^N_s \leq t} \in \calF_{t} \vee \calF^m_j \qquad \forall j\geq 0, m \geq 1, t \geq 0$$
or, equivalently, 
$$
\bcF_s=\left[ \bigvee_{j=1}^{\jmax+1} \calF_{(s-j+1)\wedge\tau_j}   \right] \vee \sigma(A \cap \{j^N_s \leq j,m^N_s \leq m\}, j \geq 0, m \geq 1, A \in \calF^m_j )
$$
%
%
so that, by Doob's optional sampling theorem, $\bM$ is an $\bcF$--martingale. We also remark that, on the event $\{ \tau_\jmax<T\}$, we have $\bM_{T+\jmax}=M_T$.\medskip 

For $0\leq s \leq T+\jmax$, we next define the large $N$ limit of the processes $(j_s^N)$, $(t_s^N)$ and $\frac{m^N_s}{K_N}$, which are respectively
$$
j_s \eqdef  \inf \set{ j \geq 0,\ t_{j+1} + j > s} = \sum_{j=1}^\jmax \un_{s\geq t_j+j-1},
$$
$$
t_s \eqdef (s-j_s) \wedge t_{j_s},
$$
and
$$
m_s\eqdef (s-j_{s})\wedge 1,
$$
as well as the asymptotic variance by 
\begin{align*}
c(s)= v\p{t_s} + \sum_{j=1}^{j_s-1} v_{j}(1) + v_{j_{s}}(m_s) .
\end{align*}
It is easily checked that the limit $c(s)$ is continuous. We finally define a quadratic variation $\bv^N$ for $\bM$ by
$$
\bv^N_s= v^N_{t^N_s}+ \sum_{j=1}^{j^N_s-1} \sum_{m=1}^{K_N} (\Delta^m_j)^2 + \sum_{m=1}^{m^N_s}( \Delta_{j^N_s}^m)^2.
$$
It is clear that $(\bM_s)^2-\bv^N_s$ is a local martingale. \medskip 

From items \ref{item1},  \ref{item2},  \ref{item3},  \ref{item6}, we can check that for all $0\leq s \leq T+\jmax$, $\bv^N_s$ goes to  $c(s)$ in probability. More precisely, because the processes are increasing, and $t^N_s\to t_s$ in probability, for any $\delta>0$, for $N$ large enough, we have with arbitrarily large probability that 
$$
v^N_{t_s-\delta}\leq v^N_{t^N_s} \leq v^N_{t_s+\delta},
$$
with $v^N_{t_s-\delta}\to v_{t_s-\delta}$ and $v^N_{t_s+\delta}\to v_{t_s+\delta}$. By taking $\delta$ small enough, we can have $v_{t_s+\delta}-v_{t_s-\delta}$ arbitrarily small by continuity of the limit, which proves the convergence for the first term. The third term can be treated similarly, and is not detailed. 

Moreover, the assumptions on the jumps in Theorem $1.4$ page 339 in~\cite{ek86} are verified by items \ref{item4} and \ref{item5}. Therefore the process $\bM$ converges in distribution to a Gaussian process with variance given by $c(s)$. In particular, we have the convergence in distribution of the final time marginal $\bM_{T+\jmax}$. To finally transfer the convergence to $M_T$, we write
$$
|M_T-\bM_{T+\jmax}|=\un_{\tau_\jmax>T} |M_T-\bM_{T+\jmax}|\leq \un_{\tau_\jmax>T} \ 2\|M\|_\infty,
$$
which converges in probability to $0$ from item \ref{item2}. 
\end{proof}

\medskip

\subsection{Proof of the main result}
In this section we use Proposition \ref{pro:clt} to prove our main result Theorem \ref{gamma}. Recall that $1-K/N$ goes to $\theta$ when $N$ goes to infinity.

\begin{Lem}
\label{lem:delta}
At each resampling time $\tau_j$, the local martingale
$$
\int_0^t \rho^N_{u^-} d\calM_u,
$$
 jumps, and each jump can be decomposed as a sum of $K$ martingale increments $\Delta_j^m$, for $1\leq m\leq K$,
$$
 \rho^N_{\tau_j^-} \Delta \calM_{\tau_j} = \sum_{m=1}^K \Delta_j^m.
$$
Moreover, we have, for any sequence $\alpha_N \to \alpha\in (0,1]$, 
$$
\sum_{m=1}^{\lfloor \alpha_N K\rfloor} (\Delta_j^m)^2  \xrightarrow[N\to\infty]{\P} \alpha \theta^{2j}(1-\theta) \V_{\eta_{t_j}}(Q).
$$
\end{Lem}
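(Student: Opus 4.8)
The plan is to analyze the jump $\rho^N_{\tau_j^-} \Delta \calM_{\tau_j}$ at a fixed branching time $\tau_j$ by writing it explicitly as a sum over the $K$ resampled particles, then identify a martingale-increment structure for the discrete filtration obtained by revealing the resampled particles one at a time, and finally compute the limiting conditional variance accumulated along the first $\lfloor \alpha_N K\rfloor$ increments.

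First, I would make the decomposition explicit. From \eqref{lajcaljcljsc} (or directly from the definition \eqref{aicj} of $\calM^{n,k}$), at time $\tau_j$ the jump $\rho^N_{\tau_j^-}\Delta\calM_{\tau_j}$ is $\rho^N_{\tau_j^-}$ times a sum of $K$ terms, one per resampled particle $n\notin\Alive_j$, of the form $(1-K/N)\big(\L^n_{\tau_j} - \eta^N_{\Alive_j}(Q)\big)$ — recall $\L^n_{\tau_j}$ is the value inherited from the uniformly chosen alive particle, and $\eta^N_{\Alive_j}(Q) = \frac{1}{N-K}\sum_{m\in\Alive_j}\L^m_{\tau_j}$. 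Ordering the $K$ resampled particles arbitrarily and letting $\calF^m_j$ be $\calF_{\tau_j}^-$ augmented by the choices made for the first $m$ of them, each term $\Delta^m_j := \rho^N_{\tau_j^-}(1-K/N)\big(\L^{n_m}_{\tau_j} - \eta^N_{\Alive_j}(Q)\big)$ satisfies $\E[\Delta^m_j\,|\,\calF^{m-1}_j]=0$ because the resampling choices are i.i.d.\ uniform on $\Alive_j$, independent of $\calF_{\tau_j}^-$ and of each other; this gives the claimed martingale-increment structure and the decomposition $\rho^N_{\tau_j^-}\Delta\calM_{\tau_j}=\sum_{m=1}^K\Delta^m_j$.

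Next, for the conditional-variance computation: conditionally on $\calF^{m-1}_j$ (which in fact does not even depend on $m$ here, since the choices are independent), $\E[(\Delta^m_j)^2\,|\,\calF_{\tau_j}^-] = (\rho^N_{\tau_j^-})^2(1-K/N)^2\,\V_{\eta^N_{\Alive_j}}(Q)$. Summing over $m=1,\dots,\lfloor\alpha_N K\rfloor$ and using that the $(\Delta^m_j)^2$ are, conditionally on $\calF_{\tau_j}^-$, i.i.d.\ bounded random variables (each bounded by $(2\|\ph\|_\infty)^2$ up to the $\rho^N$ factor which is $\le 1$), a conditional law of large numbers gives $\sum_{m=1}^{\lfloor\alpha_N K\rfloor}(\Delta^m_j)^2 - \lfloor\alpha_N K\rfloor\,(\rho^N_{\tau_j^-})^2(1-K/N)^2\,\V_{\eta^N_{\Alive_j}}(Q) \to 0$ in probability; more carefully one controls the conditional second moment of the centered sum, which is $O(K)$ while the normalization to worry about is order $1$, so in fact $\frac{1}{K}\sum(\Delta^m_j)^2$ concentrates, and multiplying by $\lfloor\alpha_N K\rfloor/K \to \alpha$ suffices. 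It then remains to identify the limit of $\lfloor\alpha_N K\rfloor\, (\rho^N_{\tau_j^-})^2 (1-K/N)^2\,\V_{\eta^N_{\Alive_j}}(Q)$: we have $\lfloor\alpha_N K\rfloor (1-K/N)^2 = \alpha_N\cdot K(1-K/N)\cdot(1-K/N) \sim \alpha N(1-K/N)\cdot\theta = \alpha N\cdot\frac{K}{N}\cdot\theta \to$ — wait, more directly $\lfloor \alpha_N K\rfloor(1-K/N)^2 \sim \alpha_N K \theta^2$, and since $K/N\to 1-\theta$, $K\sim(1-\theta)N$, giving $\lfloor\alpha_N K\rfloor(1-K/N)^2 \sim \alpha(1-\theta)N\theta^2$; combined with $(\rho^N_{\tau_j^-})^2 \to \theta^{2(j-1)}$ (since $\rho^N_{\tau_j^-}=(1-K/N)^{j-1}\to\theta^{j-1}$ by Proposition~\ref{pro:quant}) this is not yet order $1$, so one must also use that $\V_{\eta^N_{\Alive_j}}(Q)$ carries the remaining factor. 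Indeed $\V_{\eta^N_{\Alive_j}}(Q) \xrightarrow{\P} \V_{\eta_{t_j}}(Q^{T-t_j}(\ph)) = \V_{\eta_{t_j}}(Q)$ — this follows because $\gamma^{-,N}_{\tau_j}(Q^l) = (1-K/N)^{j-1}\eta^N_{\Alive_j}(Q^l)\cdot\frac{N-K}{N} \xrightarrow{\P} \gamma_{t_j}(Q^l)$ by Lemma~\ref{lem:cvvar}, so $\eta^N_{\Alive_j}(Q^l) \xrightarrow{\P} \gamma_{t_j}(Q^l)/\theta^j = \eta_{t_j}(Q^l)$ for $l=1,2$ and hence $\V_{\eta^N_{\Alive_j}}(Q) = \eta^N_{\Alive_j}(Q^2) - \eta^N_{\Alive_j}(Q)^2 \xrightarrow{\P} \V_{\eta_{t_j}}(Q)$. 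Hmm, but then the product $\theta^{2(j-1)}\cdot\alpha(1-\theta)\theta^2\cdot\V_{\eta_{t_j}}(Q)\cdot(\text{the stray }N)$ — I need to recount the powers of $N$: $\sum_{m=1}^{\lfloor\alpha_N K\rfloor}(\Delta^m_j)^2$ has $\lfloor\alpha_N K\rfloor\approx\alpha K\approx\alpha(1-\theta)N$ terms each of conditional size $(\rho^N_{\tau_j^-})^2(1-K/N)^2\V_{\eta^N_{\Alive_j}}(Q)\approx\theta^{2(j-1)}\theta^2\V_{\eta^N_{\Alive_j}}(Q)$, and $\V_{\eta^N_{\Alive_j}}(Q)$ is a genuine $O(1)$ quantity — so the product is $\approx\alpha(1-\theta)N\theta^{2j}\V_{\eta_{t_j}}(Q)$, which diverges. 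So the claim as I've set it up can only hold if $\Delta^m_j$ carries an extra $1/\sqrt N$; checking \eqref{aicj}, the martingale $\calM$ in the paper is $\frac{1}{\sqrt N}\sum_n\calM^n$, so in fact $\Delta^m_j$ should include that $1/\sqrt N$ factor — i.e.\ $\Delta^m_j = \frac{1}{\sqrt N}\rho^N_{\tau_j^-}(1-K/N)(\L^{n_m}_{\tau_j}-\eta^N_{\Alive_j}(Q))$. With that correction the sum is $\approx\frac{1}{N}\cdot\alpha(1-\theta)N\theta^{2j}\V_{\eta_{t_j}}(Q) = \alpha(1-\theta)\theta^{2j}\V_{\eta_{t_j}}(Q)$, matching the stated limit.

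The main obstacle will be the conditional law of large numbers step — ensuring that the sum of the $\lfloor\alpha_N K\rfloor$ squared increments concentrates around its conditional mean uniformly enough to pass to the limit in probability, and simultaneously that the conditional mean $\V_{\eta^N_{\Alive_j}}(Q)$ converges (via Lemma~\ref{lem:cvvar} on the event $\{\tau_j\approx t_j\}$ from Proposition~\ref{pro:quant}). Concretely I would: (a) condition on $\calF_{\tau_j}^-$ and compute $\E[(\text{centered sum})^2\,|\,\calF_{\tau_j}^-]$, which by conditional independence is $\lfloor\alpha_N K\rfloor\cdot\V(\text{one squared increment}\,|\,\calF_{\tau_j}^-) = O(K)\cdot\frac{1}{N^2}\cdot O(\|\ph\|_\infty^4) = O(1/N)$, so the centered sum vanishes in $L^2$; (b) invoke Lemma~\ref{lem:cvvar} to get $\V_{\eta^N_{\Alive_j}}(Q)\xrightarrow{\P}\V_{\eta_{t_j}}(Q)$; (c) combine with the elementary limits $\lfloor\alpha_N K\rfloor/N\to\alpha(1-\theta)$, $(1-K/N)^2\to\theta^2$, $(\rho^N_{\tau_j^-})^2\to\theta^{2(j-1)}$ and Slutsky to conclude. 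The boundedness of $\L^n$ by $\|\ph\|_\infty$ makes all the moment bounds routine, so the only genuinely delicate point is assembling these convergences consistently with the $\frac{1}{\sqrt N}$ normalization of $\calM$.
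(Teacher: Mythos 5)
Your proof is correct and follows essentially the same route as the paper: decompose the jump $\rho^N_{\tau_j^-}\Delta\calM_{\tau_j}$ (with the crucial $1/\sqrt{N}$ normalization from the definition of $\calM$) into $K$ conditionally i.i.d.\ martingale increments given $\calF_{\tau_j}^-$, concentrate $\sum(\Delta_j^m)^2$ around its conditional mean $\lfloor\alpha_N K\rfloor\,N^{-1}(1-K/N)^{2j}\V_{\eta^N_{\Alive_j}}(Q)$ by Chebyshev, and then pass to the limit via Lemma~\ref{lem:cvvar} and the deterministic scalings. The mid-proof bookkeeping confusion over powers of $N$ is resolved correctly, and the deduction of $\V_{\eta^N_{\Alive_j}}(Q)\to\V_{\eta_{t_j}}(Q)$ from $\gamma^{-,N}_{\tau_j}(Q^l)\to\gamma_{t_j}(Q^l)$ is an accurate unpacking of what the paper uses implicitly.
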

\begin{proof}
We first detail the proof for the simpler case $\alpha_N=1$. 
We have (see (\ref{lajcaljcljsc}) in the proof of Lemma \ref{lem:decomp})
\begin{align*}
\rho_{\tau_j^-}^N \Delta \calM_{\tau_j}& = \frac{1}{\sqrt N} (1-\k/{N})^{j-1} \p{ \sum_{n \notin \Alive_j} \L^n_{\tau_j}   - \frac{\k}{N-\k} \sum_{n \in \Alive_j} \L^n_{\tau_j} }\\
&=  (1-\k/{N})^{j} \sum_{n \notin \Alive_j} \frac{1}{\sqrt N}\p{\L^n_{\tau_j}  - \frac{1}{N-\k} \sum_{n \in \Alive_j} \L^n_{\tau_j}}.
\end{align*}
It is easy to check that this last expression is a sum of $K$ martingale increments
$$\Delta_j^m=\frac{1}{\sqrt N} (1-\k/{N})^{j}\p{\L^m_{\tau_j}  - \frac{1}{N-\k} \sum_{n \in \Alive_j} \L^n_{\tau_j}}.$$
Given $\calF_{\tau_j}^-$, it is actually a sum of i.i.d. uniformly bounded variables, and so is the sum of their squares, so that it is concentrated around its mean (e.g., by Chebyshev's 
 inequality)
\begin{align*}
\sum_{m=1}^K (\Delta_j^m)^2&=(1-\k/{N})^{2j-2} \p{(1-K/N)^2\frac{\k}{N}\V_{\eta^N_{\Alive_j}}  +O_P(1/\sqrt N)}.
\end{align*}
So, using Lemma \ref{lem:cvvar}, we finally get
$$
\sum_{m=1}^K (\Delta_j^m)^2\xrightarrow[N\to\infty]{\P} \theta^{2j}(1-\theta) \V_{\eta_{t_j}}(Q).
$$

For a general $\alpha_N$, the proof is similar, except that instead of all the particles in $\Alive_j$, we take only the $\lfloor \alpha_N K\rfloor$ first ones. Note that the chosen ordering of $\Alive_j$ is irrelevant  because the new particles are i.i.d. (given $\calF_{\tau_j}^-$).
\end{proof}
\begin{Lem}
\label{lem:var2}
For $0\leq j\leq \jmax $ and $0\leq t\leq T$, we have 
$$
\int_{\tau_j\wedge t}^{\tau_{j+1}\wedge t}  (\rho^N_{u^-})^2 d\A_u \xrightarrow[N\to\infty]{\P} \theta^j (\gamma_{t_{j+1}\wedge t}(Q^2) - \gamma_{t_j\wedge t}(Q^2)).
$$
\end{Lem}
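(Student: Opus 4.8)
First I would note that the integrand is deterministic between branchings. Set $r_N:=1-\k_N/N$, so that $r_N\to\theta$, and adopt for $j=\jmax$ the conventions $t_{\jmax+1}=\tau_{\jmax+1}=T$ (harmless since $t\le T$). The jump process $\rho^N$ is piecewise constant and jumps only at the $\tau_i$'s, with $\rho^N_{u^-}=r_N^{\,j}$ for every $u\in(\tau_j,\tau_{j+1}]$ (the left limit at $\tau_{j+1}$ being still $r_N^{\,j}$). Hence, since $(\tau_j\wedge t,\tau_{j+1}\wedge t]\subseteq(\tau_j,\tau_{j+1}]$, for every $t\in[0,T]$
$$
\int_{\tau_j\wedge t}^{\tau_{j+1}\wedge t}(\rho^N_{u^-})^2\,d\A_u=r_N^{\,2j}\big(\A_{\tau_{j+1}\wedge t}-\A_{\tau_j\wedge t}\big),\qquad\int_{\tau_j\wedge t}^{\tau_{j+1}\wedge t}\rho^N_{u^-}\,d\A_u=r_N^{\,j}\big(\A_{\tau_{j+1}\wedge t}-\A_{\tau_j\wedge t}\big),
$$
both vanishing on $\{\tau_j\ge t\}$. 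Integrating the decomposition $d\gamma^N_u(Q^2)=\rho^N_{u^-}\,d\A_u+\tfrac1{\sqrt N}\rho^N_{u^-}\,d\tM_u$ of Lemma~\ref{lemgQ2} between the stopping times $\tau_j\wedge t$ and $\tau_{j+1}\wedge t$, and using the second identity above, I would then rewrite
$$
\int_{\tau_j\wedge t}^{\tau_{j+1}\wedge t}(\rho^N_{u^-})^2\,d\A_u=r_N^{\,j}\Big(\gamma^N_{\tau_{j+1}\wedge t}(Q^2)-\gamma^N_{\tau_j\wedge t}(Q^2)\Big)-r_N^{\,j}R^N_j(t),\qquad R^N_j(t):=\tfrac1{\sqrt N}\int_{\tau_j\wedge t}^{\tau_{j+1}\wedge t}\rho^N_{u^-}\,d\tM_u .
$$

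Next I would check that the martingale remainder is negligible. By~\eqref{lemgQ24} and $0\le\rho^N_{u^-}\le1$, the local martingale $t\mapsto\tfrac1{\sqrt N}\int_0^t\rho^N_{u^-}\,d\tM_u$ has integrable quadratic variation, hence is a genuine square-integrable martingale on $[0,T]$; It\^o's isometry together with $(\rho^N_{u^-})^2\le\rho^N_{u^-}$ then gives $\E\big[\big(\tfrac1{\sqrt N}\int_0^{\sigma}\rho^N_{u^-}\,d\tM_u\big)^2\big]\le 6\|\ph\|_\infty^4/N$ for every stopping time $\sigma\le T$. Writing $R^N_j(t)$ as the increment of this martingale between the stopping times $\tau_j\wedge t$ and $\tau_{j+1}\wedge t$ yields $\E[R^N_j(t)^2]=O(1/N)$, hence $R^N_j(t)\to0$ in probability.

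It then remains to prove that $\gamma^N_{\tau_i\wedge t}(Q^2)\to\gamma_{t_i\wedge t}(Q^2)$ in probability for $i\in\{j,j+1\}$. When $t>t_i$, Proposition~\ref{pro:quant} makes $\tau_i\wedge t=\tau_i$ with probability tending to $1$, so this is just Lemma~\ref{lem:cvvar}. For general $t$ I would argue by a squeeze. Given $\eps>0$, by continuity of $s\mapsto\gamma_s(Q^2)$ (Lemma~\ref{lem:cont}) choose $\delta>0$ with $|\gamma_u(Q^2)-\gamma_v(Q^2)|\le\eps$ whenever $|u-v|\le\delta$, and set $a:=\big((t_i\wedge t)-\delta\big)^+$, $b:=\big((t_i\wedge t)+\delta\big)\wedge T$. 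On the event $\{|\tau_i-t_i|\le\delta\}$, which has probability tending to $1$ by Proposition~\ref{pro:quant}, one has $a\le\tau_i\wedge t\le b$ since $s\mapsto s\wedge t$ is $1$-Lipschitz; applying Lemma~\ref{lemgQ2} on $(a,\tau_i\wedge t]$ and on $(a,b]$, using that the $d\A$-integrals are nonnegative while the corresponding martingale remainders are $o_P(1)$ exactly as above, I would obtain $\gamma^N_a(Q^2)+o_P(1)\le\gamma^N_{\tau_i\wedge t}(Q^2)\le\gamma^N_b(Q^2)+o_P(1)$. Since $\gamma^N_a(Q^2)$ and $\gamma^N_b(Q^2)$ converge in probability to $\gamma_a(Q^2)$ and $\gamma_b(Q^2)$ at the \emph{deterministic} times $a$ and $b$ — the very fact that is used, through Proposition~\ref{pro:estimate}, in the proof of Lemma~\ref{lem:cvvar} — and $\gamma_a(Q^2),\gamma_b(Q^2)$ both lie within $\eps$ of $\gamma_{t_i\wedge t}(Q^2)$, letting $\eps\downarrow0$ yields the convergence. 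Feeding this and the previous paragraph into the second display, and using $r_N^{\,j}\to\theta^{\,j}$ with Slutsky's lemma, gives $\int_{\tau_j\wedge t}^{\tau_{j+1}\wedge t}(\rho^N_{u^-})^2\,d\A_u\to\theta^{\,j}\big(\gamma_{t_{j+1}\wedge t}(Q^2)-\gamma_{t_j\wedge t}(Q^2)\big)$ in probability, which is the assertion.

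I expect the main obstacle to be the use, in the previous paragraph, of the convergence $\gamma^N_s(Q^2)\to\gamma_s(Q^2)$ at \emph{deterministic} times $s$: the quantity $\gamma^N_{\tau_i\wedge t}(Q^2)$ is sampled at the random time $\tau_i\wedge t$, and when $t\le t_i$ this is not covered by Lemma~\ref{lem:cvvar}, so one must fall back on that deterministic-time statement, which — although not isolated as such before — is obtained exactly as in the proof of Lemma~\ref{lem:cvvar} via Proposition~\ref{pro:estimate}, the squeeze being the device that transports it to $\tau_i\wedge t$ uniformly in $t$. Everything else — the two reductions, the remainder estimate — is routine manipulation of the decomposition of Lemma~\ref{lemgQ2}, It\^o's isometry, and Proposition~\ref{pro:quant}.
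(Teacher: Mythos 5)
Your proof is correct and takes essentially the same approach as the paper: factor out the deterministic weight $r_N^j=(1-K_N/N)^j$, convert the remaining $d\A$-integral into an increment of $\gamma^N(Q^2)$ via the decomposition of Lemma~\ref{lemgQ2} with an $O_P(1/\sqrt N)$ martingale remainder, then transfer from the random endpoints $\tau_i\wedge t$ to the deterministic ones $t_i\wedge t$ by a squeeze combining Proposition~\ref{pro:quant}, Proposition~\ref{pro:estimate}, and the continuity of $s\mapsto\gamma_s(Q^2)$ from Lemma~\ref{lem:cont}. The only (cosmetic) difference is the order of operations — the paper squeezes the $\A$-integral before applying Lemma~\ref{lemgQ2}, whereas you apply Lemma~\ref{lemgQ2} first and then squeeze the evaluation times — and your handling of the case $t\le t_i$ is, if anything, a little more explicit than the paper's.
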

\begin{proof}
%
The proof is quite similar to that of Lemma \ref{lem:cvvar}. For $\tau_{j} < u\leq \tau_{j+1}$, we have $\rho^N_{u^-}=(1-K/N)^j$, so that
$$
\int_{\tau_j\wedge t}^{\tau_{j+1}\wedge t}  (\rho^N_{u^-})^2 d\A_u =(1-K/N)^j \int_{\tau_j\wedge t}^{\tau_{j+1}\wedge t}  \rho^N_{u^-} d\A_u.
$$ 
By assumption, the deterministic factor $(1-K/N)^j$ goes to $\theta^j$ when $N$ goes to infinity. Additionally, since $\A$ is increasing and $\rho^N_{u^-} >0$, we deduce that, for any $\delta>0$, 
$$
\int_{(\tau_j\wedge t)+\delta}^{(\tau_{j+1}\wedge t)-\delta}  \rho^N_{u^-} d\A_u  \leq \int_{\tau_j\wedge t}^{\tau_{j+1}\wedge t}  \rho^N_{u^-} d\A_u \leq \int_{(\tau_j\wedge t)-\delta}^{(\tau_{j+1}\wedge t)+\delta}  \rho^N_{u^-} d\A_u.
$$
From Lemma \ref{lemgQ2}, we have
\begin{align*}
&\gamma^N_{(\tau_{j+1}\wedge t)-\delta}(Q^2)-\gamma^N_{(\tau_j\wedge t)+\delta}(Q^2) +O_P(1/\sqrt N)\\
 &\ \ \leq \int_{\tau_j\wedge t}^{\tau_{j+1}\wedge t}  \rho^N_{u^-} d\A_u\leq \gamma^N_{(\tau_{j+1}\wedge t)+\delta}(Q^2)-\gamma^N_{(\tau_j\wedge t)-\delta}(Q^2) +O_P(1/\sqrt N).
\end{align*}
From Proposition~\ref{pro:quant}, for $N
$ large enough, with large probability, it then comes
\begin{align*}
&\gamma^N_{(t_{j+1}\wedge t)-\delta}(Q^2)-\gamma^N_{(t_j\wedge t)+\delta}(Q^2) +O_P(1/\sqrt N)\\
&\ \ \leq \int_{\tau_j\wedge t}^{\tau_{j+1}\wedge t}  \rho^N_{u^-} d\A_u
\leq \gamma^N_{(t_{j+1}\wedge t)+\delta}(Q^2)-\gamma^N_{(t_j\wedge t)-\delta}(Q^2) +O_P(1/\sqrt N).
\end{align*}
Proposition \ref{pro:estimate} implies 
$$
\gamma^N_{(t_{j+1}\wedge t)-\delta}(Q^2)-\gamma^N_{(t_j\wedge t)+\delta}(Q^2)\xrightarrow[N\to\infty]{\P} \gamma_{(t_{j+1}\wedge t)-\delta}(Q^2)-\gamma_{(t_j\wedge t)+\delta}(Q^2),
$$
and 
$$
\gamma^N_{(t_{j+1}\wedge t)+\delta}(Q^2)-\gamma^N_{(t_j\wedge t)-\delta}(Q^2)\xrightarrow[N\to\infty]{\P} \gamma_{(t_{j+1}\wedge t)+\delta}(Q^2)-\gamma_{(t_j\wedge t)-\delta}(Q^2).
$$
By continuity of the mapping $t\mapsto\gamma_t(Q^2)$, see Lemma \ref{lem:cont}, we can choose $\delta$ small enough such that the difference of the two limits is arbitrarily small, both being close to $ \gamma_{t_{j+1}\wedge t}(Q^2)-\gamma_{t_j\wedge t}(Q^2)$. 
\end{proof}

\paragraph{Proof of Theorem \ref{gamma}}
Recall that
$$\gamma_T^N(\ph)-\gamma_T(\ph)=\Big(\gamma_T^N(Q)-\gamma^N_0(Q)\Big)
+\Big(\eta^N_0(Q^T(\ph))-\eta_0(Q^T(\ph))\Big),$$
where, by  \eqref{eq:decomp},
$$\sqrt{N}(\gamma_t^N(Q)-\gamma_0^N(Q))= \int_0^t \rho^N_{u^-} \p{ d\M_u + d\calM_u}.$$
It turns out that the martingale $\sqrt{N}\gamma_t^N(Q)$ does not satisfy the assumptions of Proposition \ref{pro:clt} because the number of resamplings is not a priori bounded. We therefore define a new martingale by setting the initial condition $M_0 \eqdef \sqrt{N}(\eta^N_0(Q^T(\ph))-\eta_0(Q^T(\ph))$ as well as
$$M_t - M_0=\sum_{j=1}^\jmax \un_{\tau_j\leq t}\rho^N_{\tau_j^-}\Delta \calM_{\tau_j} +  \sum_{j=1}^\jmax  \int_{\tau_{j-1}\wedge t}^{\tau_j\wedge t} \rho_{u^-}^N d\M_u + \int_{\tau_\jmax \wedge t}^{T\wedge t\wedge \tau_{\jmax+1}} \rho_{\tau_\jmax}^N d\M_u.
$$
Simple algebra reveals that 
$$  
\int_0^T \rho_{u^-}^N (d\M_u+d\calM_u) - (M_T - M_0) \neq 0
$$
implies that $\tau_{\jmax+1}\leq T$. But by Proposition~\ref{pro:quant}, this happens with arbitrarily small probability, provided $N$ is large enough, so that 
$$
\left|\int_0^T \rho_{u^-}^N (d\M_u+d\calM_u) - (M_T-M_0)\right|\xrightarrow[N\to\infty]{\P} 0.
$$
As a consequence, it suffices to show the CLT for  $M_t=M_0+M^{0}_t+M^{1}_t$.

The rest of the proof is devoted to show that $M_t$ indeed satisfies the assumptions of Proposition \ref{pro:clt}, with 
$$
 M^{1}_t = \sum_{j=1}^\jmax \un_{\tau_j\leq t}\rho^N_{\tau_j^-}\Delta \calM_{\tau_j},
$$
and
$$
M^{0}_t=\sum_{j=1}^\jmax  \int_{\tau_{j-1}\wedge t}^{\tau_j\wedge t} \rho_{u^-}^N d\M_u  + \int_{\tau_\jmax \wedge t}^{T\wedge t\wedge \tau_{\jmax+1}} \rho_\jmax^N d\M_u.
$$
For $M^{1}_t$, item \ref{item2} comes from Proposition~\ref{pro:quant}, item \ref{item5} is from the construction of the particle system and the fact that each little jump is of order $1/\sqrt{N}$, and item \ref{item6} is from Lemma \ref{lem:delta}.\medskip 

For $M^{0}_t$, we define the increasing process $v^N_t$ as 
$$
v^N_t=\sum_{j=1}^\jmax  \int_{\tau_{j-1}\wedge t}^{\tau_j\wedge t} (\rho_{u^-}^N)^2 d\A_u  + \int_{\tau_\jmax \wedge t}^{T\wedge t\wedge \tau_{\jmax+1}} (\rho_\jmax^N)^2 d\A_u.
$$
The fact that $(M^{0}_t)^2-v^N_t$ is a local martingale is from Lemma \ref{Lem:quad}. Item \ref{item3} is from Lemma \ref{lem:var2}, item \ref{item4} from Lemma \ref{lem:jumps} (ii) and (iii) (we should not forget the $1/\sqrt N$ factor in the definition of $\M$ in equation (\ref{mbbt})), and  Lemma \ref{Lem:quad}. The orthogonality between $M^{0}$ and $M^{1}
$ is from Lemma \ref{Lem:quad}. \medskip

The convergence of the initial condition $M_0 \eqdef \sqrt{N}(\eta^N_0(Q^T(\ph))-\eta_0(Q^T(\ph))$ is the usual CLT. \medskip

We can then apply Proposition \ref{pro:clt}. Lemmas \ref{lem:delta} and \ref{lem:var2} imply that the total asymptotic variance of $\sqrt{N} \p{ \gamma_T^N(\ph) - \gamma_T(\ph)}$ is given by 

\begin{align*}
\sigma^2_T(\ph)&= \Var_{\eta_0}(Q) + \sum_{j=1}^\jmax \theta^{2j}(1-\theta) \V_{\eta_{t_j}}(Q)\nonumber \\
& \qquad + \sum_{j=0}^{\jmax-1} \theta^{j}(\gamma_{t_{j+1}}(Q^2)-\gamma_{t_j}(Q^2))+ \theta^{\jmax}(\gamma_{T}(Q^2)-\gamma_{t_{\jmax}}(Q^2)) . 
\end{align*}

We recall that by definition $\eta_{t} = \gamma_{t} / \rho_t$ and that $\rho_{t} = \theta^j $ for $\theta^{j} \leq t < \theta^{j+1}$, so that we can rewrite the asymptotic variance as
\begin{align*}
\sigma^2_T(\ph)&=\eta_0(Q^2)- \eta_0(Q)^2 +\sum_{j=1}^\jmax \theta^{2j}(1-\theta)\p{\eta_{t_j}(Q^2)-\eta_{t_j}(Q)^2}\\
& \qquad + \sum_{j=0}^{\jmax-1} \theta^{2j}(\theta\eta_{t_{j+1}}(Q^2)-\eta_{t_j}(Q^2))+ \theta^{2\jmax}(\eta_T(Q^2)-\eta_{t_\jmax}(Q^2)).
\end{align*}
This may be reformulated as
\begin{align}
\sigma^2_T(\ph)&=\eta_0(Q^2)- \eta_0(Q)^2 +\sum_{j=0}^{\jmax-1} \theta^{2j+1}\eta_{t_{j+1}}(Q^2)+ \theta^{2\jmax}\eta_T(Q^2)- \eta_{0}(Q^2) \nonumber \\
& \qquad -\sum_{j=1}^\jmax \theta^{2j+1}\eta_{t_j}(Q^2) - \sum_{j=1}^\jmax (1-\theta)\theta^{2j}\eta_{t_j}(Q)^2 , \nonumber \\
&= \theta^{2\jmax}\V_{\eta_{T}}(Q) + \sum_{j=1}^{\jmax} \p{\theta^{2j-1}  - \theta^{2j+1}  }\eta_{t_{j}}(Q^2) - \sum_{j=1}^\jmax \theta^{2j}(1-\theta)\eta_{t_j}(Q)^2 , \label{eq:var_simple}
\end{align}
where, in the last line, we have used that by definition $\eta_T(Q^2) = \eta_T(\ph^2) $ and $\eta_0(Q) = \gamma_T(\ph) = \theta^{\jmax} \eta_T(\ph)$.
Finally, remarking that 
$$   \theta^{2j}(1-\theta) = (\theta^{2j-1}  - \theta^{2j+1})-\theta^{2j} ( 1/ \theta -1),$$
as well as  
$$\eta_{t_j}(Q) = \gamma_{t_j}(Q) \theta^{-j} = \gamma_T(\ph) \theta^{-j} = \eta_T(\ph) \theta^{\jmax-j},$$ 
we conclude that
$$
\sigma^2_T(\ph) = \theta^{2\jmax}(\V_{\eta_{T}}(\ph)+ \jmax (1/\theta-1)\eta_{T}(\ph)^2)+\sum_{j=1}^{\jmax} \p{\theta^{2j-1}  - \theta^{2j+1}  }\V_{\eta_{t_{j}}}(Q).
$$

Hence we have proved Theorem \ref{gamma} for any test function $\ph$ in ${\cal D}$. To see that the result is still valid for any $\ph$ in $\overline{\cal D}$, it suffices to apply the same reasoning as in \cite{cdgr2}.

\section{Supplementary material}
\label{akjajdnzjd}

\subsection{Another formulation of the asymptotic variance}

As mentioned in  \cite{cdgr1}, it turns out that it is possible to make a connection between Fleming-Viot particle systems and interacting particle systems as exposed for example in the pair of books \cite{delmoral04a,del2013mean}. Without going into details, we will just show that our asymptotic variance coincides with the one given in \cite{delmoral04a} page 452. As already noticed in \cite{cdgr1}, we need to use predicted measures instead of corrected ones. At each $t_k$, we denote by $\tilde\eta_{t_k}$ the predicted measure, that is $\eta_{t_{k-1}}Q^{t_{k}-t_{k-1}}$. We have $\tilde\eta_{t_k}=\theta\eta_{t_k}+(1-\theta) \delta_\partial$. For any test function $\varphi$ such that $\varphi(\partial)=0$, we have $\tilde\eta_{t_k}(\varphi)=\theta\eta_{t_k}(\varphi)$. Note also that $\eta_T=\tilde\eta_T$ since there is no resampling at the end.\medskip

We start from \eqref{eq:var_simple} and remark that
$$
\sum_{j=1}^\jmax \theta^{2j}\eta_{t_j}(Q)^2 = \sum_{j=1}^\jmax \theta^{2(j-1)}\tilde\eta_{t_j}(Q)^2 
$$
to get, with the convention $t_{\jmax+1}=T$,
\begin{align*}
\sigma^2_T(\ph)&=\theta^{2\jmax}\V_{\tilde\eta_{T}}(Q)+\sum_{j=1}^{\jmax} \theta^{2j-2}\V_{\tilde\eta_{t_{j}}}(Q)-\sum_{j=1}^\jmax \theta^{2j}\theta\eta_{t_j}(Q^2)
+\sum_{j=1}^\jmax  \theta^{2j}\theta\eta_{t_j}(Q)^2\\
&=\sum_{j=0}^{\jmax} \theta^{2j}\V_{\tilde\eta_{t_{j+1}}}(Q)-\sum_{j=1}^\jmax \theta^{2j+1}\V_{\eta_{t_j}}(Q). 
\end{align*}
Now, we observe that
$$ \theta \V_{\eta_{t_j}}(Q) 
 = \tilde\eta_{t_j}(Q^2)-\theta\tilde\eta_{t_{j+1}}(Q)^2 =  \tilde\eta_{t_j}(\un_F (Q-\tilde\eta_{t_{j+1}}(Q))^2),
$$
so that
\begin{align*}
\sigma^2_T(\ph)&=\sum_{j=0}^{\jmax} \theta^{2j}\V_{\tilde\eta_{t_{j+1}}}(Q) - \sum_{j=1}^\jmax  \theta^{2j} \tilde\eta_{t_j}(\un_F (Q-\tilde\eta_{t_{j+1}}(Q))^2)\\
&=\sum_{j=1}^{\jmax+1} \theta^{2(j-1)}\V_{\tilde\eta_{t_{j}}}(Q) - \sum_{j=2}^{\jmax+1}  \theta^{2(j-1)} \tilde\eta_{t_{j-1}}(\un_F (Q-\tilde\eta_{t_{j}}(Q))^2).
\end{align*}
This is the result given in  \cite{delmoral04a} page 452.

\subsection{Stopping times and martingales}
\begin{Lem}\label{albcios}
 Let $\tau$ be a stopping time on a filtered probability space, and $U$ an integrable and $\calF_\tau$ measurable random variable such that $\E \b {U | \calF_{\tau^-}}=0$. Then the process $t \mapsto U \un_{t \geq  \tau}$ is a c\`adl\`ag martingale. 
\end{Lem}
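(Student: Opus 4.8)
The plan is to verify directly the three defining properties of a \cadlag martingale for the process $Z_t:=U\,\un_{t\ge\tau}$ with respect to the filtration $(\calF_t)_{t\ge0}$: the \cadlag and integrability properties, adaptedness, and the martingale identity. The \cadlag part is pathwise and immediate, since for fixed $\omega$ the map $t\mapsto\un_{t\ge\tau(\omega)}$ is a right-continuous step function with a single jump (of height $U(\omega)$) at $\tau(\omega)$ and left limit $0$ there; integrability is clear because $|Z_t|\le|U|\in L^1$.

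For adaptedness I would invoke the classical fact that if $U$ is $\calF_\tau$-measurable then $U\,\un_{\tau\le t}$ is $\calF_t$-measurable for every $t$. This is checked first for indicators $U=\un_B$, $B\in\calF_\tau$, where $B\cap\{\tau\le t\}\in\calF_t$ is exactly the definition of $\calF_\tau$, and then extended to general integrable $U$ by a monotone class argument (or approximation by simple functions). Since $\tau$ is a stopping time, $\{\tau\le t\}\in\calF_t$, and thus $Z_t=U\,\un_{\tau\le t}$ is $\calF_t$-measurable.

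The heart of the proof is the martingale identity. Fix $0\le s<t$ and decompose $U\,\un_{\tau\le t}=U\,\un_{\tau\le s}+U\,\un_{s<\tau\le t}$. The first term is $\calF_s$-measurable by the previous step and equals $Z_s$, so it remains to prove $\E[\,U\,\un_{s<\tau\le t}\mid\calF_s\,]=0$, which I would establish by testing against an arbitrary $A\in\calF_s$ and showing $\E[\,U\,\un_{A}\,\un_{s<\tau\le t}\,]=0$. The key point is that $A\cap\{s<\tau\le t\}$ lies in $\calF_{\tau^-}$: indeed $A\cap\{s<\tau\}\in\calF_{\tau^-}$ by the very definition of $\calF_{\tau^-}$ (the $\sigma$-field generated by $\calF_0$ together with the sets $B\cap\{u<\tau\}$, $B\in\calF_u$), while $\{\tau\le t\}$ is the complement of $\{t<\tau\}\in\calF_{\tau^-}$. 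Consequently $\un_{A\cap\{s<\tau\le t\}}$ is a bounded $\calF_{\tau^-}$-measurable random variable, and pulling it out of the conditional expectation together with the hypothesis $\E[U\mid\calF_{\tau^-}]=0$ yields $\E[\,U\,\un_{A\cap\{s<\tau\le t\}}\,]=\E[\,\un_{A\cap\{s<\tau\le t\}}\,\E[U\mid\calF_{\tau^-}]\,]=0$. Since $A\in\calF_s$ was arbitrary, $\E[Z_t\mid\calF_s]=Z_s$, completing the proof.

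The only genuinely delicate point is the bookkeeping with $\calF_{\tau^-}$: one must use the correct description of this $\sigma$-field to see simultaneously that the ``lower cut'' $A\cap\{s<\tau\}$ (with $A\in\calF_s$) and the ``upper cut'' $\{\tau\le t\}$ both belong to it. Everything else is routine, and the event $\{\tau=\infty\}$, if of positive probability, needs no separate treatment since there $Z_t\equiv0$.
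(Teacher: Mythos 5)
Your proof is correct and follows essentially the same route as the paper: both split $\un_{t\ge\tau}=\un_{s\ge\tau}+\un_{s<\tau\le t}$, pull the $\calF_s$-measurable first term out, and then kill the second term by observing that $\un_{s<\tau\le t}$ (multiplied by an $\calF_s$-set) is $\calF_{\tau^-}$-measurable so that $\E[U\mid\calF_{\tau^-}]=0$ can be applied. Your write-up is merely more explicit — testing against $A\in\calF_s$ and spelling out why $A\cap\{s<\tau\}$ and $\{\tau\le t\}$ lie in $\calF_{\tau^-}$ — where the paper compresses this into a short chain of displayed equalities, but the underlying argument is identical.
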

\begin{proof}
 Let $t > s$ be given. First remark that $\un_{t \geq  \tau} = \un_{s \geq  \tau} +  \un_{s <  \tau} \un_{t \geq  \tau} $. Then by definition of $\calF_\tau$, $U\un_{s \geq  \tau} $ is $\calF_s$-measurable, so that
 \begin{align*}
 \E \b{ U \un_{t \geq  \tau} | \calF_s} =  U\un_{s \geq  \tau} + \E \b{U \un_{t \geq  \tau} 
  | \calF_s}\un_{s <  \tau}.
  \end{align*}
Next, by definition of $\calF_{\tau^-}$, $\E \b{U \un_{t \geq  \tau} 
  | \calF_s}\un_{s <  \tau} $ and $\un_{t \geq  \tau}$ are $\calF_{\tau^-}$-measurable, hence the result follows from 
\begin{align*}
   \E \b{U \un_{t \geq  \tau} 
  | \calF_s}\un_{s <  \tau}  =\E \b{\E \b{ U | \calF_{\tau^-} }\un_{t \geq  \tau} 
  | \calF_s}\un_{s <  \tau}  =0.
   \end{align*}
\end{proof}

\bibliographystyle{plain}
\bibliography{biblio-cdgr}
\end{document}